
\documentclass[final,onefignum,onetabnum,a4paper]{siamltex} 
\usepackage{amsmath,amssymb,amsbsy,color,url,epsfig,graphicx,pifont,setspace,placeins}

  \usepackage{paralist}

\usepackage{appendix}

\usepackage{comment}

\usepackage{xspace}


  \usepackage[pdfstartview=,colorlinks=true,pdfmenubar=false,bookmarks=false]{hyperref}
  \hypersetup{urlcolor=blue, citecolor=red}

%
%
%


\hoffset=-1cm
\voffset=-3.0cm
\setlength{\paperwidth}{180mm}
\setlength{\paperheight}{245mm}

\textheight=8.2 true in
\textwidth=15.1cm 
\topmargin 30pt
\setcounter{page}{1}

\renewcommand{\epsilon}{\varepsilon}
\renewcommand{\leq}{\leqslant}
\renewcommand{\geq}{\geqslant}

\setlength{\unitlength}{1mm}

\newcommand{\eq}[1]{\eqref{#1}}

\newcommand{\fref}[1]{Fig.~\ref{#1}}
\newcommand{\frefs}[1]{Figs.~\ref{#1}}
\newcommand{\Fref}[1]{Figure~\ref{#1}}

\newcommand{\sref}[1]{Sec.~\ref{#1}}
\newcommand{\srefs}[1]{Secs.~\ref{#1}}

\newcommand{\ph}{\phantom}
\newcommand{\tr}[1]{\ignorespaces}

\newcommand{\be}{\begin{equation}}
\newcommand{\ee}{\end{equation}}

\newcommand\rdot{\dot{r}}
\newcommand\phidot{\dot{\phi}}
\newcommand\xdot{\dot{x}}
\newcommand\ydot{\dot{y}}

\newcommand\vdot{\dot{v}}
\newcommand\cA{\mathcal{A}}

\newcommand\cF{\mathcal{F}}
\newcommand\cL{\mathcal{L}}
\newcommand\cM{\mathcal{M}}

\newcommand\cO{\mathcal{O}}
\newcommand\cP{\mathcal{P}}
\newcommand\cR{\mathcal{R}}
\newcommand\C{\mathbb{C}}
\newcommand\N{\mathbb{N}}
\newcommand\R{\mathbb{R}}

\newcommand{\HH}{\textit{HH}}
\newcommand{\SL}{\textit{SL}\xspace}
\newcommand{\PD}{\textit{PD}\xspace}

\newcommand{\NormalForm}{\texttt{HHnfDDE}\xspace}

\newtheorem{conjecture}[theorem]{Conjecture}
\newtheorem{propose}[theorem]{Proposition}

\renewcommand\Re{\mathrm{Re}\,}
\renewcommand\Im{\mathrm{Im}\,}
\newcommand\gtb{\overline{\widetilde{g}}\mbox{}}


\newcommand{\email}[1]{\protect\href{mailto:#1}{#1}}

\title{Resonance phenomena in a scalar delay differential equation with two state-dependent delays}

\author{R.C. Calleja\thanks{Depto.\ Matem\'aticas y Mec\'anica, IIMAS, Universidad Nacional Aut\'onoma
         de M\'exico, 01000 M\'exico. (\email{calleja@mym.iimas.unam.mx})}
        \and A.R. Humphries\thanks{Departments of Mathematics \&
        Statistics, and, Physiology, McGill University, Montreal, Quebec H3A 0B9, Canada
        (\email{Tony.Humphries@mcgill.ca})}
        \and B. Krauskopf\thanks{Department of Mathematics, University of Auckland, Auckland 1142, New Zealand
        (\email{b.krauskopf@auckland.ac.nz})}}

\begin{document}
\maketitle

\begin{center}
\today
\end{center}

\newcommand{\slugmaster}{%
\slugger{CIA's}{xxxx}{xx}{x}{x--x}}

\begin{abstract}
We study a scalar DDE with two delayed feedback terms that depend
linearly on the state. The associated constant-delay DDE, obtained by
freezing the state dependence, is linear and without recurrent
dynamics.  With state dependent delay terms, on the other hand, the
DDE shows very complicated dynamics. To investigate this, we perform a
bifurcation analysis of the system and present its bifurcation diagram
in the plane of the two feedback strengths. It is organized by
Hopf-Hopf bifurcation points that give rise to curves of torus
bifurcation and associated two-frequency dynamics in the form of
invariant tori and resonance tongues. We
numerically determine the type of the
Hopf-Hopf bifurcation points by computing the normal form on the
center manifold; this requires the expansion of the functional
defining the state-dependent DDE in a power series whose terms up to
order three only contain constant delays.  We implemented this
expansion and the computation of the normal form coefficients in
Matlab using symbolic differentiation, and the resulting code
\NormalForm is supplied as a supplement to this article.  Numerical
continuation of the torus bifurcation curves confirms the correctness
of our normal form calculations. Moreover, it enables us to compute
the curves of torus bifurcations more globally, and to find associated
curves of saddle-node bifurcations of periodic orbits that bound the
resonance tongues. The tori themselves are computed and visualized in
a three-dimensional projection, as well as the planar trace of a
suitable Poincar{\'e} section. In particular, we compute periodic
orbits on locked tori and their associated unstable manifolds (when
there is a single unstable Floquet multiplier). This allows us to
study transitions through resonance tongues and the breakup of a
$1\!:\!4$ locked torus. The work presented here demonstrates that state dependence alone is
capable of generating a wealth of dynamical phenomena.
\end{abstract}


\begin{keywords} State-dependent delay differential equations,
  bifurcation analysis, invariant tori, resonance tongues, Hopf-Hopf
  bifurcation, normal form computation
\end{keywords}

\begin{AMS}
34K60,      
34K18,      
37G05,      
37M20      
\end{AMS}


\section{Introduction}
\label{sec:intro}

Time delays arise naturally in numerous areas of application as an
unavoidable phenomenon, for example, in balancing and control
\cite{bps,fied,ims,just1,mtk,post1,ppk2,pyragas}, machining
\cite{IST07}, laser physics \cite{kane,kralen,luedge}, agent dynamics
\cite{kbs,leblanc,scholl,san}, neuroscience and biology
\cite{AFW92,fcwz,flmm,KCP14,Wal-Gui-Hum-13}, and climate modelling
\cite{dijkstra,kaper,ks}. Important sources of delays are
communication times between components of a system, maturation and
reaction times, and the processing time of information received. When
they are sufficiently large compared to the relevant internal time
scales of the system under consideration, then the delays must be
incorporated into its mathematical description. This leads to
mathematical models in the form of delay differential equations
(DDEs). In many situations the relevant delays can be considered to be
fixed; examples are the travel time of light between components of a
laser system and machining with rotating tools.

There is a well established theory of DDEs with a finite number of
constant delays as infinite dimensional dynamical systems; see, for
example, \cite{Bel-Coo-63,Hale77,Hal-Lun-93,DGVLW95,Smith11,stepan}.
Usually the phase space of the dynamical system is taken to be
$C=C\bigl([-\tau,0],\R^d\bigr)$, the Banach space of continuous
functions mapping $[-\tau,0]$ to $\R^d$, where $d$ is the number of
variables and $\tau$ is the largest of the delays. The DDE can then be
written as a retarded functional differential equation
\be \label{eq:rfde} u'(t)=F(u_t), 
\ee where $F:C\to\R^d$ and $u_t\in C$ for each $t\geq0$ is the
function \be \label{eq:ut} u_t(\theta)=u(t+\theta), \quad
\theta\in[-\tau,0].  \ee In other words, an initial condition consists
of a function over the time interval from the (maximal) delay $\tau$
ago up to time $0$, which (under appropriate mild assumptions)
determines the solution for all time $t>0$. In fact, solutions of
constant-delay DDEs depend smoothly on their initial conditions, and
linearizations at equilibria and periodic solutions have at most
finitely many unstable eigen-directions. As a consequence, bifurcation
theory for this class of DDEs is analogous to that for ordinary
differential equations (ODEs), and one finds the same types of
bifurcations. In particular, center manifold and normal form methods
allow for the local reduction of the DDE to an ODE describing the
dynamics near a bifurcation point of interest.  Moreover, advanced
numerical tools for simulation and bifurcation analysis of DDEs with
constant delays have become available in recent years
\cite{Bel-Zen-03,BZMG09,Bre-Die-Gyl-Sca-Ver-16,Eng-Luz-Roo-02,Kra-Gre-03,NewDDEBiftool,sr}.
These
theoretical and numerical tools have been applied very successfully in
many application areas, including those mentioned above.

It is very important to realise that treating the delays that arise as
constant is a modelling assumption that must be justified. This can be
argued successfully, for example, in machining when the tool has
nearly infinite stiffness perpendicular to the cutting direction
\cite{stepan}, or in laser dynamics where light travels over a fixed
distance \cite{kane}. On the other hand, in many contexts, including
in biological systems and in control problems
\cite{Cra-Hum-Mac-16,DGHP10,Die-Gyl-etal-10,Get-Wau-16,IST07,Jes-Cam-10,pyragas2,yb},
the delays one encounters
are not actually constant. In particular, they may depend on the state
in a significant way, that is, change dynamically during the
time-evolution of the system.

DDEs with state-dependent delays have been an active area of research
in recent years.  Many parts of the general theory of DDEs with
constant delays have been extended to also cover state-dependent DDEs,
where $\tau$ is now a global bound on the maximal possible delay; see
\cite{Har-Kri-Wal-Wu-06} and the discussion in
\cite{Hum-Dem-Mag-Uph-12}. However, the mathematical theory is
considerably more complicated and as yet incomplete.  Solutions of
state-dependent DDEs do not depend smoothly on initial conditions or
parameters unless extra assumptions are made on the initial conditions
\cite{Har-16}, and this dramatically complicates arguments around key
concepts, requiring new theory and proofs for asymptotics, the initial
value problem, bifurcations, and invariant manifolds. Indeed, these
important elements of the theory have been addressed only recently
\cite{Har-Kri-Wal-Wu-06,Hu-Wu-10,Krisztin03,
  JMPRN11b,JMPRN11,Sieber12,Walther04,Walther14}.
Similarly, the numerical bifurcation analysis of state-dependent DDEs
is more involved. Recent developments include approaches for the
continuation of solutions
and bifurcations for state dependent delay equations
\cite{Hum-Dem-Mag-Uph-12,NewDDEBiftool}.
The paper \cite{Kra-Gre-03} has methods for finding invariant
manifolds for DDEs with constant delays.
Issues
that remain outstanding include smoothness of center manifolds and,
therefore, also normal form reductions.

In light of the considerable additional difficulty, state-dependent
delays are quite often replaced by constant delays --- by considering
some sort of average or nominal delays --- even in modelling
situations when this cannot be readily justified. The obvious question
is whether and when a state-dependent DDE displays dynamics that is
considerably different from that of the associated constant-delay DDE.

In this paper we address this practical question by studying a
prototypical DDE with state-dependent delays, rather than an
equation arising from a specific
application. This example DDE has the important property that it
exhibits very complicated dynamics with state dependence, while it reduces to a
linear DDE with only trivial dynamics if the delays are made
constant. Specifically, we consider here the scalar DDE
\be \label{eq:twostatedep} 
u'(t)=-\gamma u(t) - \kappa_1 u(\alpha_1(t,u(t))) - \kappa_2 u(\alpha_2(t,u(t))),
\enspace\textrm{where}\enspace \alpha_i(t,u(t))=t-a_i-c_iu(t).
\ee
The two delay terms, with feedback strengths $\kappa_1,\kappa_2
\geq0$, are given by the linear functions $\alpha_i(t,u(t))$, where
$a_i$ and $c_i$ are strictly positive.  In the absence of the delay
terms, that is, for $\kappa_1 = \kappa_2 = 0$, \eqref{eq:twostatedep}
is a linear scalar equation whose solutions decay exponentially to the
origin with rate $\gamma >0$. For $\kappa_1, \kappa_2 \neq 0$, on the
other hand, the delay terms are present and constitute a feedback. When $c_1 = c_2 =
0$ the DDE \eqref{eq:twostatedep} is linear with two fixed delays
$a_1$ and $a_2$, while for $c_1, c_2 \neq 0$ the delay terms are linearly
state dependent.

A singularly perturbed version of \eq{eq:twostatedep} is studied in
\cite{HBCHM:1,KE:1,JMPRN16}. In \cite{KE:1} solutions are considered
near the singular Hopf bifurcations, while \cite{HBCHM:1} constructs
large amplitude singular solutions and studies the singular limit of
the fold bifurcations. Equation \eq{eq:twostatedep} is a
generalisation of the corresponding single delay DDE which can be obtained
from \eq{eq:twostatedep} by setting $\kappa_2=0$. The single delay DDE was
first introduced in a singularly perturbed form as an example problem
by Mallet-Paret and Nussbaum in \cite{JMPRN11} and considered
extensively in \cite{JMPRNP94} as part of a series of papers
\cite{JMPRNI,JMPRNII,JMPRNIII,JMPRN11,JMPRN16,JMPRNP94} studying
singularly perturbed solutions of state-dependent DDEs.

We consider \eq{eq:twostatedep} with all parameters non-negative and
without loss of generality assume that $a_2>a_1$. We also assume
\be \label{eq:gk2}
\gamma>\kappa_2.
\ee
It is shown in
\cite{Hum-Dem-Mag-Uph-12} that
if \eq{eq:gk2} holds
and
\be \label{eq:phibd}
\phi(t)\in\Bigl( -\frac{a_1}{c},\frac{a_1}{\gamma c}(\kappa_1+\kappa_2) \Bigr), \quad
\forall t\in\Bigl[ -a_2-\frac{a_1}{\gamma}(\kappa_1+\kappa_2),0\Bigr]
\ee
then equation \eq{eq:twostatedep} is well posed and all
solutions of the initial value problem composed of solving
\eq{eq:twostatedep} for $t\geq 0$ with the initial function
\be \label{eq:ih}
u(t)=\phi(t), \quad t\leq 0
\ee
satisfy
\be \label{eq:ubd}
u(t)\in\Bigl( -\frac{a_1}{c}, \frac{a_1}{\gamma c}(\kappa_1+\kappa_2)\Bigr), \quad \forall t>0.
\ee
This bound on the
solution also implies a bound on the delays with \eq{eq:twostatedep}
and \eq{eq:ubd} implying that
\be \label{eq:alphabd}
\alpha_i(t,u(t))\in\Bigl( t - a_i - \frac{a_1}{\gamma}(\kappa_1+\kappa_2) , t \Bigr)
\subset \Bigl( t - a_2 - \frac{a_1}{\gamma}(\kappa_1+\kappa_2) , t \Bigr), \quad \forall t\geq0
\ee
and, in particular, the state-dependent delays can never
become advanced when $\gamma>\kappa_2$.  It is also shown in
\cite{Hum-Dem-Mag-Uph-12} that there exists
$\xi\in[0,a_2+\frac{a_1}{\gamma}(\kappa_1+\kappa_2)]$ such that
$\alpha_i(t,u(t))$ is a strictly monotonic increasing function of $t$
for $t>\xi$.

Notice that the DDE \eq{eq:twostatedep} is of the form \eq{eq:rfde}
with $d=1$ if we let \be \label{eq:F} F(\phi)=-\gamma
\phi(0)-\kappa_1\phi(-a_1-c\phi(0)) - \kappa_2\phi(-a_2-c\phi(0)).
\ee We take $\tau=a_2+\frac{a_1}{\gamma}(\kappa_1+\kappa_2)$, which by
\eq{eq:alphabd} ensures that $\alpha_i(t,u(t))\in[t-\tau,t]$ for
$t\geq0$ and the function $u_t$ includes all the information necessary
to evaluate $u'(t)$.  Moreover, provided the initial function $\phi$
is Lipschitz it follows from standard DDE theory \cite{Driver63} that
the initial value problem has a unique solution satisfying
\eq{eq:ubd}.

For $c_1=c_2=0$ general theory \cite{Bel-Coo-63,Hale77,Hal-Lun-93}
states that, depending on the values of $\gamma$, $\kappa_1$ and
$\kappa_2$, all trajectories of \eqref{eq:twostatedep} decay to the
origin or grow exponentially in time.
In other words, the dynamics of the
system without state dependence in the delay terms is indeed
trivial. On the other hand, it was shown in \cite{Hum-Dem-Mag-Uph-12}
that state dependence of the delay terms changes the dynamics
completely, since the function $F$ in \eqref{eq:F} is nonlinear.
Therefore, the state dependency of the
delays for $c_1,c_2 \neq 0$ is responsible for nonlinearity in the
system. The two delay terms introduce two oscillatory degrees of
freedom into the system, which may then interact nonlinearly. As a
result, the dynamics of the DDE \eqref{eq:twostatedep} is no longer
linear; rather it is, colloquially speaking, potentially at least as
complicated as that of two coupled nonlinear oscillators with
dissipation. Indeed, the interest in \eqref{eq:twostatedep} arises
from the fact that it is effectively the simplest example one can
consider of a DDE with several state-dependent delays. In particular,
any non-trivial dynamics that one finds must be due to the state
dependence.

\begin{figure}[t!]
\begin{center}
\includegraphics{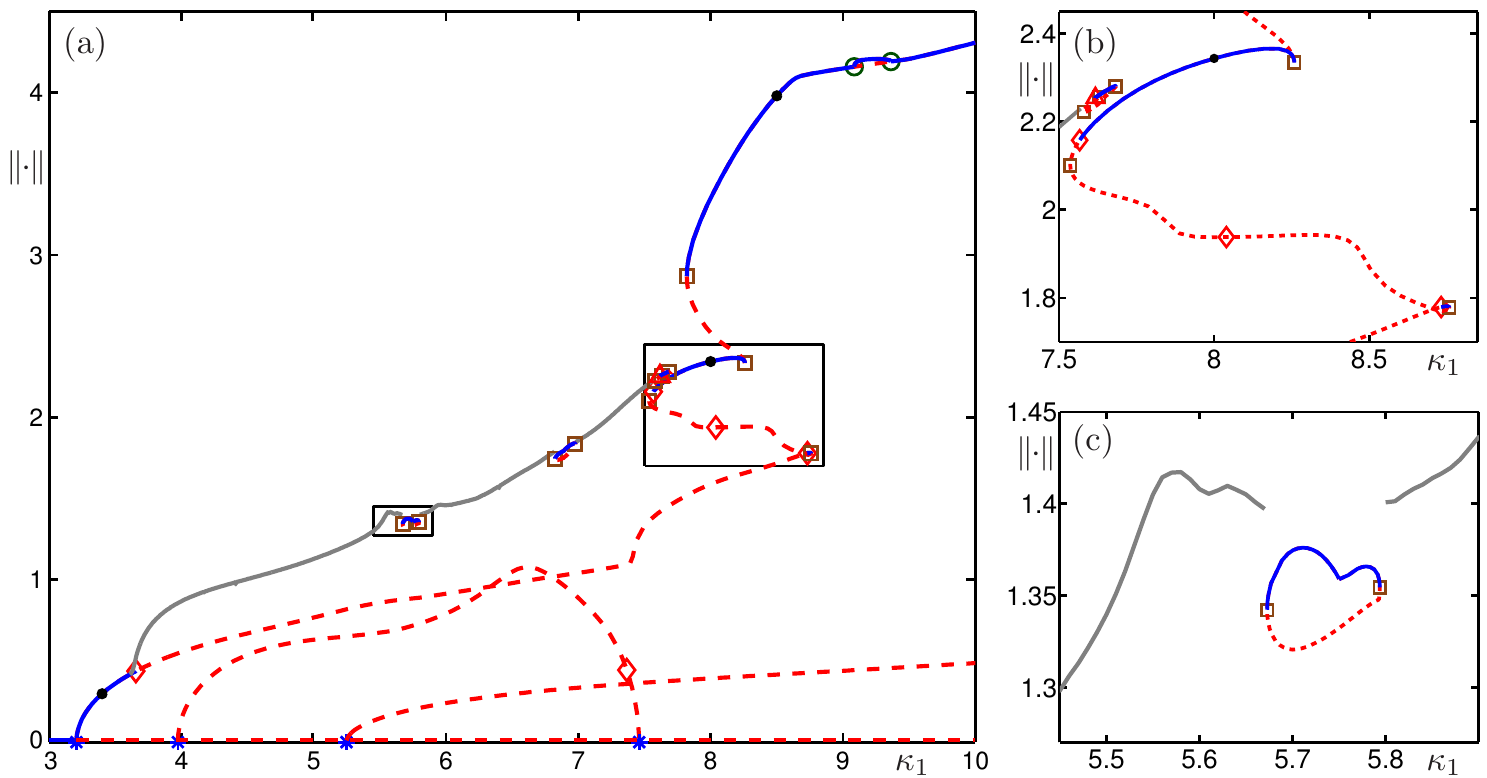}
\caption{One-parameter bifurcation diagram in $\kappa_1$ of
\eqref{eq:twostatedep}, showing the norm
$\| u(t)\|
= \max u(t)-\min u(t)$ of periodic orbits bifurcating
from Hopf bifurcations of the trivial solution (a).  Stable orbits are
shown as solid blue curves and unstable ones as dashed red curves;
indicated are points of Hopf bifurcation (stars), saddle-node of limit
cycle bifurcation (squares), period-doubling bifurcations (circles),
and torus bifurcation (diamonds). Also shown is a grey curve of tori
that bifurcate from the principal branch of periodic orbits at
$\kappa_1\approx3.6557$. Panels (b) and (c) are two enlargements near
the stable part of the principal branch and near an isola of periodic
orbits associated with $1\!:\!4$ phase locking.  The black dots
correspond to the stable periodic orbits shown in
\fref{fig:perorbits}.  Here $\kappa_2=3.0$ and, throughout,
$\gamma=4.75$, $a_1=1.3$, $a_2=6.0$ and $c_1 = c_2 =1.0$.
Reproduced with permission from \cite{Hum-Dem-Mag-Uph-12}. Copyright 2012
American Institute of Mathematical Sciences.
}
\label{fig:1Dbif}
\end{center}
\end{figure}

Throughout this paper we will take
\be \label{eq:oriannaparams}
\gamma = 4.75,\quad a_1 = 1.3, \quad a_2 = 6, \quad c_1 = c_2 = 1,
\ee
and vary the values of $(\kappa_1,\kappa_2)$ with $\kappa_2\in(0,4.75)$ to
satisfy \eq{eq:gk2}.  The parameter set \eq{eq:oriannaparams} was
first identified as producing interesting dynamics for
\eq{eq:twostatedep} in \cite{Hum-Dem-Mag-Uph-12}.
There, one-parameter
bifurcation diagrams for \eq{eq:twostatedep} were produced for
this parameter set with fixed values of $\kappa_2$.
In \cite{Hum-Dem-Mag-Uph-12}, it was also noticed
that the bifurcation diagram is topologically very different for
other choices of parameters.

\fref{fig:1Dbif} illustrates the results obtained in
\cite{Hum-Dem-Mag-Uph-12} with $\kappa_2=3$ and the other parameters
given by \eq{eq:oriannaparams}, where the dynamics of
\eq{eq:twostatedep} was explored by means of finding the Hopf
bifurcations of the zero solution and continuing the branches of
bifurcating periodic orbits. As panel (a) shows, the zero solution
loses stability in a first Hopf bifurcation at $\kappa_1 \approx
3.2061$ where a branch of stable periodic solutions emerges. These
lose stability in a torus (or Neimark-Sacker) bifurcation at $\kappa_1
\approx 3.6557$. The branch of (unstable) saddle periodic solutions
regains stability in the interval $\kappa_1 \in [7.5665 , 8.2585]$
after two saddle-node (or fold) bifurcations and several further torus
bifurcations; see the enlargement in \fref{fig:1Dbif}(b). A further
two saddle-node bifurcations lead to a hysteresis loop of the branch
and the periodic solution is stable again for $\kappa_1 > 7.82$,
except for $\kappa_1\in[9.0857,9.3624]$ where a pair of
period-doubling bifurcations lead to a short interval of stable
period-doubled solutions.  Also shown in \fref{fig:1Dbif}(a) are
branches of bifurcating stable tori, which are represented by the
maximum of the norm along a numerically computed trajectory of
sufficient length. As is expected from general theory one finds locked
dynamics on the torus when $\kappa_1$ passes through resonance
tongues. The associated periodic orbits on the torus can be continued
and \fref{fig:1Dbif}(c) shows the isola of periodic solutions
corresponding to $1\!:\!4$ phase locking. Notice that there are
further Hopf bifurcation points and bifurcating branches of periodic
solutions in \fref{fig:1Dbif}(a), but none of them are stable.

\begin{figure}[t!]
\begin{center}
\includegraphics{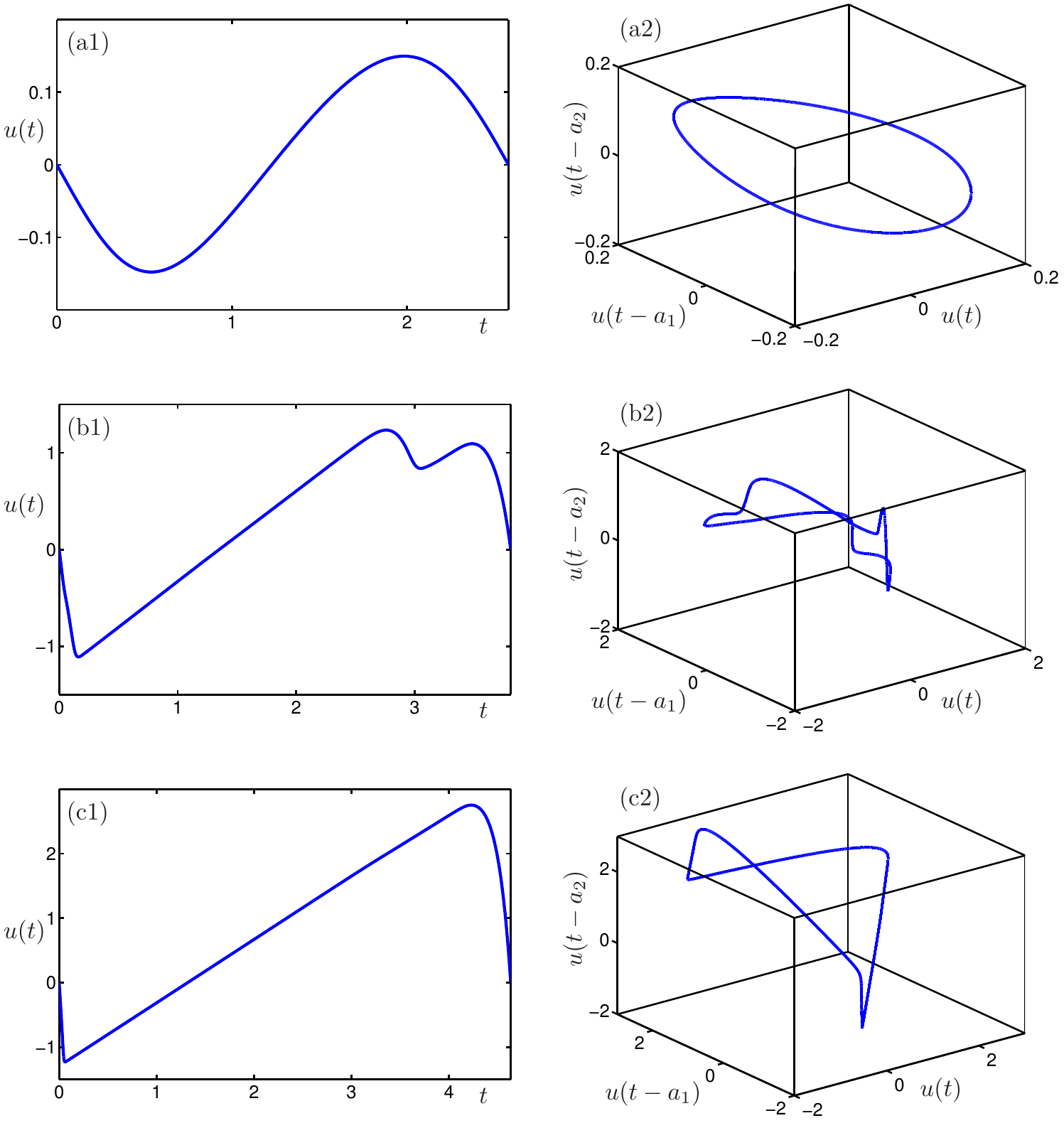}
\caption{Three stable periodic orbits from the principal branch in
\fref{fig:1Dbif}, shown as a time series over one period (left column)
and in projection into $(u(t),u(t-a_1),u(t-a_2))$-space (right
column); here $\kappa_1=3.4$ in row (a), $\kappa_1=8.0$ in row (b),
and $\kappa_1=8.5$ in row (c). }
\label{fig:perorbits}
\end{center}
\end{figure}

\Fref{fig:perorbits} shows examples of stable periodic solutions from
the three main ranges of stability discussed above, for values of
$\kappa_1$ as indicated by the black dots in \fref{fig:1Dbif}(a).
Shown in \fref{fig:perorbits} are the time series of $u(t)$ over one
period and the orbit in projection onto
$(u(t),u(t-a_1)u(t-a_2))$-space of the respective periodic solution.
The periodic solution in row (a) of \fref{fig:perorbits} is almost
perfectly sinusoidal, as is expected immediately after a Hopf
bifurcation.  The periodic solution in row (b), on the other hand,
features two local maxima and is close to a saw-tooth
shape. Similarly, the periodic solution in \fref{fig:perorbits}(c) is
very close to a simple saw-tooth, with a single linear rise and then a
sharp drop in $u(t)$. Sawtooth periodic solutions and some of their
bifurcations are considered in \cite{HBCHM:1}, where a singularly
perturbed version of \eq{eq:twostatedep} is studied.

The results from \cite{Hum-Dem-Mag-Uph-12}, summarized in
\frefs{fig:1Dbif} and \ref{fig:perorbits}, clearly show that
\eqref{eq:twostatedep} features highly nontrivial dynamics due to the
state dependence. On the other hand, a more detailed bifurcation
analysis of the system has not been performed. The only two-parameter
continuation performed in \cite{Hum-Dem-Mag-Uph-12} is limited to that
of the curves of Hopf bifurcations in the
$(\kappa_1,\kappa_2)$-plane. It identified Hopf-Hopf (or double Hopf)
bifurcations, but neither they nor the curves of torus bifurcations
emerging from them were investigated in that work. Moreover, the
bifurcating tori were not studied in detail in
\cite{Hum-Dem-Mag-Uph-12}; in particular, stable tori themselves were
not computed when phase locked.

\begin{figure}[t!]
\begin{center}
\includegraphics{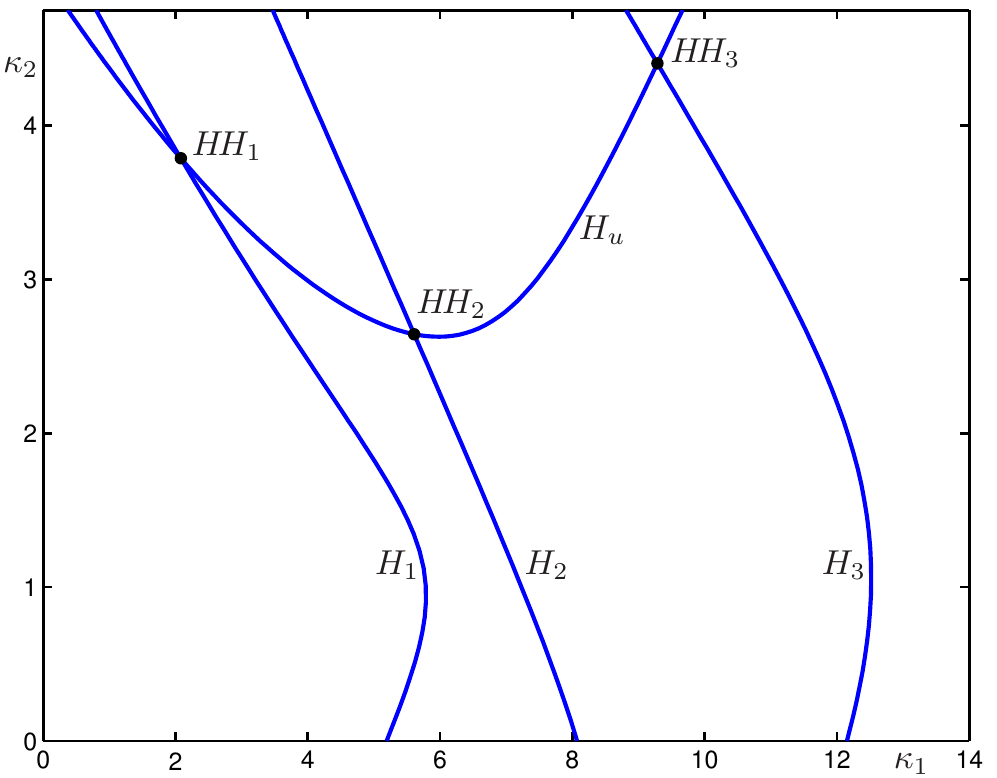}
\caption{Curves of Hopf bifurcation in the $(\kappa_1,\kappa_2)$-plane
of \eqref{eq:twostatedep}; the upper Hopf bifurcation curve $H_u$
intersects the Hopf bifurcation curves $H_j$ for $j=1,2,3$ at
Hopf-Hopf bifurcation points $\HH_j$.}
\label{fig:hopfcurves}
\end{center}
\end{figure}

To highlight the full extent of the dynamics generated by the state
dependence, in this work we present a bifurcation study of
\eqref{eq:twostatedep} that goes well beyond that in
\cite{Hum-Dem-Mag-Uph-12}.  Our focus is on two-frequency dynamics and
associated resonance phenomena; our main objects of study are the
bifurcation diagram in the $(\kappa_1,\kappa_2)$-plane and the
associated dynamics in phase space. The starting point of our
investigation is the arrangement of the Hopf bifurcation curves of
\eqref{eq:twostatedep} shown in \fref{fig:hopfcurves}.

A Hopf bifurcation occurs when a complex conjugate pair of
characteristic values crosses the imaginary axis in the linearized
system. State-dependent DDEs are linearized around equilibria by first
freezing the state-dependent delays at their steady-state values. This
technique has long been applied heuristically, but more recently has
been established rigorously by Gy\"ori and Hartung
\cite{Gyo-Har-00,Gyo-Har-07} for a class of problems including
\eq{eq:twostatedep}. Hence, we obtain
\begin{equation}\label{eq:lineareq} u'(t) = - \gamma u(t)- \kappa_1
u(t - a_1)- \kappa_2 u(t - a_2)
\end{equation}
as the linearization of \eq{eq:twostatedep} about the trivial steady
state $u\equiv 0$. The characteristic equation for \eq{eq:lineareq} is
given by
\begin{equation}\label{chareq} 0 = \lambda + \gamma +\kappa_1
e^{-a_1\lambda_1}+\kappa_2e^{-a_2\lambda_2},
\end{equation}
and so at a Hopf bifurcation we have $\lambda=\pm i\omega$ with
\begin{equation}\label{hopflocation} 0 = i\omega + \gamma +\kappa_1
e^{-i a_1\omega}+\kappa_2e^{-i a_2 \omega}.
\end{equation}

The three curves $H_1$, $H_2$ and $H_3$ in \fref{fig:hopfcurves}
emerge from $\kappa_2 = 0$ and are functions of $\kappa_2$. These
three Hopf bifurcation curves are intersected by the curve $H_u$,
which exists only above $\kappa_2 \approx 2.627$ and is a function of
$\kappa_1$. The three intersection points $\HH_1$, $\HH_2$ and $\HH_3$
are codimension-two points of Hopf-Hopf bifurcation.  From
\eqref{hopflocation} it follows that there are in fact infinitely many
Hopf bifurcation curves of \eqref{eq:twostatedep} as
$\kappa_1\to\infty$ and, consequently, other Hopf-Hopf points;
however, these are not shown in \fref{fig:hopfcurves} because we
concentrate here on the $\kappa_1$-range of $[0,14]$.  Note that we
only show the $(\kappa_1,\kappa_2)$-plane for $\kappa_2 \leq
\gamma=4.75$, because this is the $\kappa_2$-range for which we know
that the state-dependent DDE is well posed.

The numerical computation of Hopf bifurcations in state-dependent DDEs
has been implemented in the DDE-BIFTOOL software package
\cite{Eng-Luz-Roo-02,NewDDEBiftool}, and this capability actually
predates their rigorous proof.  Eichmann~\cite{Eichmann06} was the
first to establish a rigorous Hopf bifurcation theorem for
state-dependent DDEs, but results have only appeared in the published
literature much more recently \cite{Hu-Wu-10,Sieber12}. We perform
here a calculation of the four-dimensional normal form ODE on the
center manifold of the Hopf-Hopf points $\HH_1$, $\HH_2$ and
$\HH_3$. As far as we are aware, this is the first such calculation to
determine the type of Hopf-Hopf bifurcations in a state-dependent
DDE. The Hopf-Hopf normal form ODE with the multitude of cases that
can arise in the unfolding is presented in detail in
\cite{Kuz-04-Book}. In constant-delay equations it has already been
studied, see for instance \cite{Bel-Cam-94}; the normal form procedure
is also elaborated in \cite{Wu-Guo-13} and has been implemented
recently \cite{Wage14} as part of DDE-BIFTOOL \cite{NewDDEBiftool} for
constant delays only.  Our approach is to derive a constant-delay DDE
from the state-dependent DDE \eqref{eq:twostatedep} by expanding the
state dependence to sufficient order in (many) constant delays. The
Hopf-Hopf normal form ODE can then be computed from this
constant-delay DDE with established methods, and specifically we
implemented the approach from \cite{Wu-Guo-13}. In this way, we are
able to determine the type of the Hopf-Hopf bifurcation and show that
a pair of torus bifurcation curves emerges from each of the points
$\HH_1$, $\HH_2$ and $\HH_3$. The reduction to the constant-delay DDE
and the corresponding resulting normal form coefficients are presented
in \sref{sec:HH}, where we also compare our results with those
obtained from the DDE-BIFTOOL implementation. Further details of the
normal form calculations can be found in
Appendix~\ref{app:NormalForm}. Our Matlab code \NormalForm, which
implements the constant-delay expansion and computes the normal form
coefficients for the Hopf-Hopf bifurcation, is available as a
supplement to this paper.

The dynamics on the bifurcating tori may be quasi-periodic or locked,
and this is organised by resonance tongues that are bounded by curves
of saddle-node (or fold) bifurcations of periodic orbits.  We proceed
in \sref{sec:tori} by computing and presenting bifurcating stable
quasiperiodic and phase-locked tori. The Matlab \cite{Matlab}
state-dependent DDE solver \texttt{ddesd} is used to find trajectories
on stable invariant tori. In this way, we find quasiperiodic (or
high-period) tori. To obtain locked tori, we find and continue the
locked periodic solutions with the software package DDE-BIFTOOL
\cite{Eng-Luz-Roo-02,NewDDEBiftool}. The unstable manifolds of the
saddle periodic orbits on the torus are then represented as
two-dimensional surfaces obtained by numerical integration of trajectories in these
manifolds.

Since \eqref{eq:twostatedep} is a scalar DDE, but its phase-space is
infinite dimensional, we consider finite-dimensional projections of
the infinite-dimensional phase space. Moreover, we also show the tori
in suitable projections of the Poincar\'e map defined by $u(t)$
passing through $0$. This allows us to reveal the inherently
low-dimensional character of these invariant tori and associated
bifurcations.

We then perform in \sref{sec:resonance} a bifurcation study of the
emergence of tori and associated resonance phenomena. Specifically, we
compute and illustrate in the $(\kappa_1,\kappa_2)$-plane the curves
of torus bifurcation emerging from the Hopf-Hopf bifurcation point
$\HH_1$ and the associated structure of resonance tongues. We also
consider in detail the properties and bifurcations of the invariant
tori inside and near the regions of strong $1\!:\!3$ and $1\!:\!4$
resonances.  More specifically, in \sref{sec:breakup} we show how the
$1\!:\!4$ locked torus loses normal hyperbolicity and then breaks up
in a complicated sequence of bifurcations as $\kappa_1$ is changed.
Finally, in \sref{sec:overall} we present the overall
bifurcation diagram in the $(\kappa_1,\kappa_2)$-plane, provide
some conclusions and point out directions for future
research.

\section{Normal form at Hopf-Hopf bifurcation}
\label{sec:HH}

Here we derive the normal form of the Hopf-Hopf bifurcations of
\eqref{eq:twostatedep}.  For constant-delay DDEs a center manifold
reduction \cite{Bel-Cam-94,Wu-Guo-13} transforms the DDE into an ODE
on the center manifold, and the normal form of the Hopf-Hopf
bifurcation for ODEs is well known and can be found in
\cite{Kuz-04-Book}.  For state-dependent DDEs, the existence of a
$C^1$ center-unstable manifold has been proved by several authors (for instance, see
\cite{Qes-Wal-09, Kri-06, Stu-12}), with verifiable regularity conditions that
equation \eqref{eq:twostatedep} satisfies, when the spectrum of
\eqref{eq:lineareq} has eigenvalues $\lambda$ satisfying that
$\Re(\lambda) \geq 0$.  However,
the existence of a $C^3$ regular center-unstable manifold, as required for the Hopf-Hopf bifurcation analysis,
has not been rigorously established in the state-dependent case. Nor has
the normal form of the Hopf-Hopf
bifurcation for a state-dependent DDE previously been
elaborated.

Noting that linearization of \eqref{eq:twostatedep} reduces it to the
constant-delay DDE \eqref{eq:lineareq}, our approach is instead to obtain a
series expansion of the right-hand side of
\eqref{eq:twostatedep} in which the low-order terms only involve
constant delays. In particular, the state dependency will only appear in the
higher-order remainder term.
The derivation of the terms up to order three of the normal form
DDE with constant delays near the Hopf-Hopf bifurcation is exact.
We then, as is usual in the field, disregard the remainder
term and consider only this truncated expansion. We conjecture that
the truncated constant-delay DDE fully describes all of the dynamics
near the Hopf-Hopf bifurcation in the state-dependent DDE. We then proceed
by applying the established center manifold
reduction of \cite{Bel-Cam-94,Wu-Guo-13}
to obtain an ODE on the center manifold.  The flow
restricted to the center manifold satisfies an ODE in four-dimensional
space, which can be reduced to a normal form to determine the type of
Hopf-Hopf bifurcation that occurs.  The virtue of this method is that
we study a four-dimensional ODE as opposed to an infinite dimensional
semi-flow. Of course, this construction only works close to the point
of the Hopf-Hopf bifurcation in parameter space, where the center
manifold persists since the rest of the eigenvalues are at a positive
distance from the imaginary axis; the center manifold should be a
normally hyperbolic invariant manifold in the infinite-dimensional
phase space.

Since the state dependency of the delays is the only source of
nonlinearity in the DDE \eqref{eq:twostatedep}, the correct treatment
of these state-dependent delays is essential to our results.
Specifically, our strategy is as follows.  We Taylor expand the
state-dependent terms $u(t-a_i-cu(t))$ in time about their
constant-delay reductions $u(t-a_i)$. This removes the state
dependency from
the equations, but at the cost of introducing derivatives of
$u(t-a_i)$ in higher-order terms.  Not wanting to deal with neutral
DDEs, we remove the derivatives $\tfrac{d^k}{dt^k}u(t-a_i)$ by
differentiating \eqref{eq:twostatedep} $k-1$ times and evaluating them
at $t-a_i$.  This introduces additional delays into the DDE, and also
reintroduces the state dependency of the delays, but only in the
quadratic and higher-order terms. The quadratic state-dependent delays
are removed by the same process of Taylor expansion and
substitution. We can repeat this process as many times as desired to
obtain a DDE with only constant delays in the terms up to $k$-th order
for any $k$. Normal form theory for Hopf-Hopf bifurcation requires the
expansion up to order three, which is why we stop at this order.  By
using the integral form of the remainder in Taylor's theorem, it is
possible to obtain an explicit expression for the higher-order
terms.
In the current work, we conjecture, but do not prove, that the remainder term can indeed be disregarded.
This allows us to apply the
techniques of \cite{Bel-Cam-94,Wu-Guo-13} to the lower-order
constant-delay part of our expanded DDE to determine the normal form
equations, as well as the Hopf-Hopf unfolding bifurcation types.

There is a long and often inglorious history of Taylor expanding in
DDEs to alter or eliminate the delay terms. It is obviously invalid to
expand $u(t-a)$ about $u(t)$ when $|u(t-a)-u(t)|$ is large, which will
be the typical case when $a$ is not small. But related to the
phenomenon of delay induced instability, even when $u$ is close to
steady-state so that $|u(t-a)-u(t)|\ll1$, expanding $u(t-a)$ about
$u(t)$ can change the stability of the steady state; see
\cite{Driver77} for examples.  In the current work, we expand terms of
the form $u(t-a-c u(t))$ about $u(t-a)$ close to steady state. Hence,
not only is the difference in the $u$-values small, that is $|u(t-a-c
u(t))-u(t-a)|\ll1$, but crucially the difference in the time values is also
small, that is, $|(t-a-c u(t))-(t-a)|=|c u(t)|\ll1$.

Having found the normal form of the Hopf-Hopf bifurcation of
\eqref{eq:dde3} we compare the resulting bifurcations predicted by the
normal form calculation with the numerically determined bifurcation
curves for the full state-dependent DDE \eqref{eq:twostatedep}. Close
to the Hopf-Hopf points we find very good agreement, which gives us
confidence in the results obtained by both approaches.
In particular, these results constitute strong numerical evidence that the
resulting normal form for the expanded
constant-delay DDE \eqref{eq:dde3} is indeed that for the
state-dependent DDE \eqref{eq:twostatedep}. While proving this
conjecture is beyond the scope of this
paper, we remark that such a proof, and indeed the expansions that we
perform, require at least $C^3$ regularity of (the solutions in) the
manifold.  To our knowledge, the best regularity result for the center
manifolds in state-dependent DDEs establishes just $C^1$ regularity
\cite{Kri-06}, and $C^r$ regularity with $r>1$ has not yet been
established for center manifolds of state-dependent DDEs.
Nevertheless, the expansions we perform here do not seem to present
any obstruction to obtaining the formal expressions for small
amplitudes of the function $u$.
In fact, one notices that knowing
the $C^1$-smoothness of the local center-unstable manifold
justifies that the solutions can be continued for negative times.
Since in our case we are close to the steady state $u(t) = 0$,
the delays are bounded and the solutions must be $C^k$ smooth
in time.
Indeed, having $C^k$-regular solutions
could lead to obtaining $C^k$ smooth time-$1$ maps, and these are
perhaps the basis to construct a $C^k$-smooth center manifold. This
possible route to $C^k$ regularity is already proposed in
\cite{Har-Kri-Wal-Wu-06}.  We also mention that results for invariant
tori of state dependent DDEs have been derived recently in spaces of
smooth and analytic functions; see \cite{He-Lla-15a, He-Lla-15b}.

We elaborate our steps as follows.  In \sref{sec:expansion}, we
present the details of the expansion of the state-dependent DDE to
obtain a DDE with only constant delays up to order three.  In
\sref{sec:centreman} we describe aspects of the projection onto the
center manifold for this constant-delay DDE, and present the
derivation of the normal form coefficients.  The algebraic details of
these calculations are contained in Appendix~\ref{app:NormalForm}.  In
\sref{sec:NormalForm} we use the normal form obtained to determine the
type of the Hopf-Hopf bifurcation for the three Hopf-Hopf bifurcations
seen in \fref{fig:hopfcurves}.

\subsection{Expansion of the nonlinearity}
\label{sec:expansion}

In this section, we perform the expansion  of the state dependent delay equation \eqref{eq:twostatedep}  and obtain a constant-delay equation with many delays and a remainder term which is small for solutions in the center or unstable manifolds.

To describe the expansion of the nonlinearity in
\eqref{eq:twostatedep} it is convenient to define the difference
operator $L$ that generates the linear terms on the right hand side of
equation \eqref{eq:lineareq} as
\begin{equation}\label{eq:Llinearop}
L u(t) \equiv -\gamma u(t)-
\kappa_1 u(t-a_1) - \kappa_2 u(t - a_2).
\end{equation}
The difference operator $L$ can be applied recursively,
and it will be useful below to note that
\begin{align} \label{eq:L2}
L^2 u(t-a_i) & = -\gamma Lu(t-a_i) -
\sum_{j=1}^2\kappa_j Lu(t-a_i-a_j)\\
& = \gamma^2 u(t-a_i)+2\gamma\sum_{j=1}^2\kappa_j
Lu(t-a_i-a_j)+\sum\limits_{j,m=1}^2\!\kappa_j\kappa_m
u(t-a_i-a_j-a_m). \notag
\end{align}

\begin{theorem} \label{prop:expansion}
For functions $u$ in the center or unstable manifold of the steady state $u(t) = 0$, the state dependent delay equation \eqref{eq:twostatedep} can be written as a constant-delay equation up to fourth order as
\begin{align} \label{eq:expfull}
u'(t) =  Lu(t)&+\sum_{i=1}^2 \kappa_i cu(t)Lu(t-a_i)+\sum_{i,j=1}^2\kappa_i\kappa_j c^2u(t)u(t-a_i)Lu(t-a_i-a_j) \\
& -\frac12(cu(t))^2\sum_{i=1}^2\kappa_i L^2u(t-a_i)+\cR(t), \notag
\end{align}
with $\cR(t)=\cO(\|u\|_5^4)$ where $\|u\|_5=\sup_{\theta\in[-5a_2,0]}|u(\theta)|$.
\end{theorem}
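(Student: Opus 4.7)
The plan is to derive \eqref{eq:expfull} by iterating a two-step move: (i) Taylor-expand each state-dependent term $u(t-a_i-cu(t))$ in its time argument about the constant-delay value $u(t-a_i)$, and (ii) eliminate the time-derivatives $u^{(k)}(t-a_i)$ that this produces by applying the DDE, differentiated $k-1$ times, at $t-a_i$. Each such elimination reintroduces state-dependent delays but only in terms of strictly higher polynomial order in $u$. Three iterations fix the terms through cubic order, and all remaining contributions are collected into the quartic remainder $\cR(t)$.

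Concretely, Taylor's theorem with integral remainder gives
\[ u(t-a_i-cu(t)) = u(t-a_i) - cu(t)\,u'(t-a_i) + \tfrac12 (cu(t))^2 u''(t-a_i) + r_i(t), \]
with $r_i$ the cubic remainder. Substitution into \eqref{eq:twostatedep} yields the intermediate identity
\[ u'(t) = Lu(t) + cu(t)\sum_i \kappa_i u'(t-a_i) - \tfrac12 (cu(t))^2 \sum_i \kappa_i u''(t-a_i) - \sum_i \kappa_i r_i(t). \]
For the quadratic term I replace $u'(t-a_i)$ by applying \eqref{eq:twostatedep} at $t-a_i$ and Taylor-expanding its state-dependent delays once more, giving $u'(t-a_i) = Lu(t-a_i) + cu(t-a_i)\sum_j \kappa_j Lu(t-a_i-a_j) + \cO(u^3)$, where the residual error becomes fourth order after the outer factor $cu(t)$. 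For the cubic term only the leading linear expression of $u''(t-a_i)$ is required, and differentiating the linearised equation twice produces exactly $L^2 u(t-a_i)$ as displayed in \eqref{eq:L2}. Collecting monomials by degree in $u$ then reproduces the three explicit sums of \eqref{eq:expfull}.

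What remains is the bound $\cR(t) = \cO(\|u\|_5^4)$. Writing each Taylor remainder in integral form, the discarded contributions are products of at least four factors of the form $u$, $Lu$ or $u^{(k)}$ with $k\leq 3$, evaluated at times lying in $[t-5a_2,t]$: the largest nominal delay after three substitutions is $a_i+a_j+a_m \leq 3a_2$, and the state-dependent shift is controlled by \eqref{eq:alphabd}, so the window of length $5a_2$ comfortably contains every time argument that appears. The decisive, and hardest, step is to bound $|u^{(k)}(s)| \leq C\|u\|_5$ for $k\leq 3$ and $s$ in this window; this requires $C^3$ regularity in time, whereas the general center-manifold result of \cite{Kri-06} provides only $C^1$. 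Granting the argument sketched in the text above --- namely, that on the local center-unstable manifold the delays are bounded away from zero near $u\equiv 0$, solutions can be continued backward and are then iteratively smoothed by \eqref{eq:twostatedep} itself --- this higher-regularity bound follows by repeated differentiation of the DDE. With that in hand, each remainder factors as a product of at least four $\cO(\|u\|_5)$ terms and the claim follows from the mean value theorem.
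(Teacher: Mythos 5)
Your proposal is correct and follows essentially the same route as the paper: Taylor expansion of $u(t-a_i-cu(t))$ about $u(t-a_i)$ with integral remainder, elimination of the resulting derivatives $u'(t-a_i)$ and $u''(t-a_i)$ by substituting the DDE (and its derivative) at the shifted times and re-expanding the reintroduced state dependence at higher order, and the quartic remainder bound via the chain $|u^{(3)}|\leq C\sup|u''|\leq C\sup|u'|\leq C\|u\|_5$ obtained by repeatedly differentiating the equation on the backward-extendable center-unstable manifold. Your shortcut of keeping only the linear part $L^2u(t-a_i)$ of $u''(t-a_i)$ (legitimate because of the prefactor $(cu(t))^2$) is exactly what the paper's more explicit bookkeeping of the integral remainders reduces to.
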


\begin{proof}
Recall from \eqref{eq:alphabd} that delays are globally bounded by $\tau=a_2+a_1(\kappa_1+\kappa_2)/\gamma$
for the state-dependent DDE \eq{eq:twostatedep}. Since $a_2>a_1$ for $|u|<\delta$ we obtain the stronger bound that
$t-\alpha_j(t,u(t))\leq a_2+c\delta$. Now consider $u$ in the center or unstable manifold so that solutions can be extended in the past.
Using \eq{eq:Llinearop} we can rewrite equation
\eq{eq:lineareq} as $u'(t)=Lu(t)$ and equation \eq{eq:twostatedep} as
\be \label{eq:expansion1} u'(t)= Lu(t) - \sum_{i=1}^2 \kappa_i \bigl
[u(t-a_i - cu(t)) - u(t-a_i)\bigr].
\ee
As already noted, the only
nonlinearities in \eqref{eq:twostatedep} arise from the
state dependency of the delays, and we must handle these terms
carefully to obtain a correct expansion for the normal form.  Close to steady state and close
to Hopf bifurcation, the state-dependent part of the delay term,
$-cu(t)$, will be close to zero.  Therefore, close to the bifurcation
the term $t - a_i - c u(t)$ represents a small displacement from the
constant delay $t-a_i$. Since we assume $a_i>0$ the perturbation will
not be singular.

We write Taylor's theorem as
\begin{align} \notag
u^{(p)}(w- \tau - c u(w))& = u^{(p)}(w-\tau) +
\int_0^1u^{(p+1)}(w - \tau -c u(w)\,s_1) ds_1 (-cu(w))\\ \notag
& = u^{(p)}(w-\tau) + u^{(p+1)}(w - \tau) (-cu(w))\\ \notag
& \qquad + \int_0^1 \int_0^{s_1} u^{(p+2)}(w - \tau -c u(w)\,s_1\,s_2) ds_2
(-cu(w) s_1)ds_1 (-cu(w)) \\  \label{eq:taylork}
& = \sum_{j=0}^k \frac{1}{j!}u^{(p+j)}(w- \tau)(-c u(w))^j \\  \notag
& \qquad + \bigg( \int_0^1 \int_0^{s_1} \cdots \int_0^{s_{k-1}}
u^{(p+j+1)}(w - \tau -c u(w)\,s_1\,s_2\cdots s_k)\\
& \qquad\qquad\qquad \cdot [s_1(s_1\,s_2)\cdots(s_1\cdots
s_k)]ds_k\cdots ds_1 \bigg)\cdot (-cu(w))^{j+1},  \notag
\end{align}
where we note that on the unstable and center manifolds solutions are $C^p$, because they can be extended backwards in time, the delays are bounded, and solutions become more regular as we integrate \eq{eq:twostatedep} forwards in time.
Equation~\eq{eq:taylork} gives an estimate of the residue of
Taylor's theorem in terms of $(-c u(w))^{j+1}$ and $u^{(p+j+1)}$.
Now, we use \eqref{eq:taylork} with $w=t$, $\tau=a_i$, $p=0$ and $k=2$
to obtain
\begin{align} \label{eq:exp1}
u'(t) & = Lu(t) -\sum_{i=1}^2 \kappa_i \sum_{j=1}^2
\frac{1}{j!}u^{(j)}(t-a_i)(-cu(t))^j \\
& \quad + \left[\sum_{i=1}^2 \kappa_i \int_0^1
\int_0^{s_1}\int_0^{s_2}u^{(3)}(t-a_i -
cu(t)s_1s_2s_3)s_1^3s_2^2s_3\,ds_3ds_2ds_1\right] (-cu(t))^3. \notag
\end{align}
Note that we choose $k=2$ so that the integral remainder
term is quartic; more precisely it is $\cO([u(t)]^3u^{(3)}(t))$. But with bounded delays it follows from
differentiating \eq{eq:twostatedep} that for $\delta>0$ sufficiently small
\begin{align} \label{eq:u3bd}
|u^{(3)}(t-a_i-c\delta)|&\leq C_2\sup_{\theta\in[-a_i-a_2-2c\delta,0]}|u''(\theta)|
\leq C_3\sup_{\theta\in[-3a_2-3c\delta,0]}|u'(\theta)| \\ \notag
&\leq C_4\sup_{\theta\in[-4a_2-4c\delta,0]}|u(\theta)|\leq C_4\|u\|_5.
\end{align}
One problem with the expansion \eqref{eq:exp1} is
that the nonlinear terms include delayed derivative terms in $u'$,
$u''$ and $u^{(3)}$.  We want to eliminate terms of this form to avoid
the possibility of neutrality in our equations.  To this end, we
consider first the terms of the form $u'(t-a_i)$ appearing in
\eqref{eq:exp1}.  Applying \eqref{eq:twostatedep} gives
$$u'(t-a_i)=- \gamma u(t-a_i)- \sum_{j=1}^2\kappa_j  u(t - a_i - a_j - cu(t-a_i)).$$
To remove the state dependency from the right-hand side, we apply
\eqref{eq:taylork} with $w=t-a_i$, $\tau=a_j$, $p=0$ and $k=1$ to obtain
\begin{align} \label{eq:utai}
u'(t-a_i)& =- \gamma u(t-a_i)-
\sum_{j=1}^2\kappa_j u(t - a_i - a_j ) +\sum_{j=1}^2\kappa_j cu'(t-a_i-a_j)u(t-a_i)\\ \notag
& \quad +\left[\sum_{j=1}^2\kappa_j \int_0^1\int_0^{s_1} u''(t-a_i-a_j-c u(t -
a_i)s_1)s_1\,ds_2ds_1\right] (-cu(t-a_i))^2.
\end{align}
But using \eqref{eq:twostatedep} again and
\eqref{eq:taylork} with $w=t-a_i-a_j$, $\tau=a_m$ and $p=k=0$ we have
\begin{align} \notag
u'(t-a_i&-a_j) =- \gamma u(t-a_i-a_j)-
\sum_{m=1}^2\kappa_m u(t - a_i - a_j - a_m - cu(t-a_i-a_j))\\  \label{eq:utaiaj}
& =- \gamma u(t-a_i-a_j)- \sum_{m=1}^2\kappa_m u(t - a_i - a_j - a_m)\\ \notag
&\quad +\left[\sum_{m=1}^2\kappa_m\int_{0}^{1}u'(t-a_i - a_j -a_m- c
u(t-a_i-a_j)s_1)ds_1\right ](-cu(t-a_i - a_j)).
\end{align}
Hence, we can rewrite \eqref{eq:exp1} as
\be \label{eq:exp2} u'(t)=
Lu(t)+N_2u(t)+N_{23}u(t)-\frac12\sum_{i=1}^2 \kappa_i
u''(t-a_i)(cu(t))^2+\cR_{24}(t),
\ee
where $N_2u(t)$ contains the
quadratic terms in the expansion of nonlinearity, and $N_{23}u(t)$
contains the cubic terms arising from the substitution of
\eqref{eq:utaiaj} and \eqref{eq:utai} into \eqref{eq:exp1}, with
\begin{align}
N_2u(t) & =\sum_{i=1}^2 \kappa_i cu(t)\Bigl[- \gamma
u(t-a_i)- \sum_{j=1}^2\kappa_j u(t - a_i - a_j )\Bigr] =\sum_{i=1}^2
\kappa_i cu(t)Lu(t-a_i), \label{eq:N2op} \\ \label{eq:N23op}
N_{23}u(t) & = \sum_{i,j=1}^2\kappa_i\kappa_j c^2u(t)u(t-a_i)\Bigl[-\gamma
u(t-a_i-a_j)-\sum_{m=1}^2\kappa_m u(t - a_i - a_j -a_m)\Bigr] \\
& =\sum_{i,j=1}^2\kappa_i\kappa_j c^2u(t)u(t-a_i)Lu(t-a_i-a_j). \notag
\end{align}
The expression $\cR_{24}(t)$ contains the fourth-order
integral remainder term of the Taylor series stated in
\eqref{eq:exp1}, as well as the additional fourth order integral terms arising
from the substitution of \eqref{eq:utai} and \eqref{eq:utaiaj} into
\eqref{eq:exp1}.

It remains to expand the terms $u''(t-a_i)$ in
\eqref{eq:exp2}. Differentiating \eqref{eq:twostatedep} and then
applying \eqref{eq:taylork} with $p=1$ and $k=0$, gives
\begin{align} \notag
u''(t-a_i)& =-\gamma u'(t-a_i) -( 1-cu'(t-a_i)
)\sum_{j=1}^2\kappa_j u'(t-a_i-a_j-cu(t))\\ \label{eq:uddtai}
&=-\gamma u'(t-a_i)-( 1-cu'(t-a_i) )\sum_{j=1}^2\kappa_j \Bigl[u'(t-a_i-a_j)\\ \notag
& \qquad +\int_{0}^{1}u'(t-a_i - a_j - c u(t-a_i)s_1)ds_1(-cu(t-a_i))\Bigr].
\end{align}
Similar to \eqref{eq:utai} and \eqref{eq:utaiaj}, but this
time applying \eqref{eq:taylork} with $p=k=0$, we can remove the
$u'(t-a_i)$ and $u'(t-a_i-a_j)$ terms from \eqref{eq:uddtai}. Just
considering the linear terms in \eqref{eq:uddtai} and using \eq{eq:L2}
we find that
\begin{align*}
-\gamma & u'(t- a_i) - \sum_{j=1}^2\kappa_j u'(t-a_i-a_j)\\
& = -\gamma\bigl[-\gamma u(t-a_i) - \sum_{j=1}^2\kappa_j u(t-a_i-a_j-cu(t-a_i))\bigr] \\
& \quad - \sum_{j=1}^2\kappa_j \Bigl[-\gamma u(t-a_i-a_j) - \sum_{m=1}^2\kappa_m u(t-a_i-a_j-a_m-cu(t-a_i-a_j))\Bigr]\\
& = -\gamma\biggl[-\gamma u(t-a_i) - \!\sum_{j=1}^2\kappa_j
\Big[ u(t-a_i-a_j)\\
&\qquad\qquad+\!\int_{0}^{1}\!\!u'(t-a_i - a_j - c u(t-a_i)s_1)ds_1(-cu(t-a_i))\Big]\biggr]\\
& \quad - \sum_{j=1}^2\kappa_j \biggl[-\gamma u(t-a_i-a_j) - \sum_{m=1}^2\kappa_m\Big[
u(t-a_i-a_j-a_m) \\
&\qquad\qquad +\int_{0}^{1}u'(t-a_i - a_j -a_m- c u(t-a_i-a_j)s_1)ds_1(-cu(t-a_i - a_j))\Big]\biggr]\\
& = L^2u(t-a_i)
 + \sum_{j=1}^2\gamma\kappa_j\int_{0}^{1}u'(t-a_i - a_j - c u(t-a_i)s_1)ds_1(-cu(t-a_i))\\
& \qquad  + \sum_{j,m=1}^2\kappa_j\kappa_m \int_{0}^{1}u'(t-a_i - a_j -a_m- c u(t-a_i-a_j)s_1)ds_1(-cu(t-a_i - a_j)).
\end{align*}
Hence, from \eqref{eq:exp2} we obtain \eqref{eq:expfull},
where the remainder term $\cR(t)$ contains all the integral terms
derived above.
Equation~\eq{eq:u3bd} can be used to show that the remainder term in \eq{eq:exp1} is $\cO(\|u\|_5^4)$, and all the remaining integral remainder terms are seen to be $\cO(\|u\|_5^4)$ similarly.
\end{proof}

Overall, we have transformed the state-dependent DDE
\eqref{eq:twostatedep} into DDE \eqref{eq:expfull} whose terms up to
order three contain only constant delays. The price for doing this is
the introduction of additional delay terms.  While
\eqref{eq:twostatedep} contains two state-dependent delays, and its
linearization contains two constant delays, in equation
\eqref{eq:expfull} the second-order terms features five
and the third-order terms nine constant delays. Indeed, it is
easy to see that, if we continued the expansion in \eqref{eq:exp1} to
higher order, then the term $-(-cu(t)^j)\sum_{i=1}^2\kappa_i
u^{(j)}(t-a_i)$ leads to a $j^{\rm th}$-order term of the form
$-(-cu(t)^j)\sum_{i=1}^2\kappa_i L^ju(t-a_i)$. Thus, when $a_1$ and
$a_2$ are not rationally related, we will obtain $j(j+3)/2$ delays at
$j^{\rm th}$-order, namely all the terms of the form $u(t-m a_1-n
a_2)$ where $m,n$ are nonnegative integers and $1\leq m+n\leq
j$. Recalling that $a_2>a_1$ the largest delay appearing at $j^{\rm
th}$-order is then $u(t-ja_2)$.

If desired the derivatives of $u$ that appear in
$\cR(t)$ can all be removed by using \eqref{eq:twostatedep} and
derivatives of that equation, just as we removed such derivatives from
the lower-order terms.  This would result in state-dependent delays
appearing in the $\cR(t)$. Alternatively the state dependency or
distributed delay terms could be moved to higher-order terms by
truncating the expansions above at higher order. Importantly, the remainder
terms are beyond the orders that we will need for subsequent normal
form consideration, and we have the following.



\begin{conjecture} \label{conject}
The local dynamics near the steady state $u(t) = 0$ of the state dependent delay
equation \eqref{eq:twostatedep} are determined solely by the constant-delay expansion
up to the given order.
In other words, to study steady-state bifurcations of \eqref{eq:twostatedep}
standard normal form calculations for constant-delay DDEs can be applied to
the constant-delay expansion truncated to suitable order.
\end{conjecture}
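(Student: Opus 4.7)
The plan is to establish Conjecture~\ref{conject} by combining a higher-regularity center-manifold theorem with the standard observation that the normal-form coefficients of a given order depend only on the Taylor coefficients of the vector field up to that same order. Concretely, I would proceed in three stages: first, upgrade the known $C^1$ local center(-unstable) manifold for \eqref{eq:twostatedep} near $u \equiv 0$ to a manifold of regularity $C^k$ with $k$ as large as the normal-form computation requires (e.g.\ $k=3$ for the Hopf--Hopf analysis in Section~\ref{sec:HH}); second, use Theorem~\ref{prop:expansion} to verify that the reduced vector field on this manifold agrees with that of the truncated constant-delay DDE \eqref{eq:expfull} up to the required order; third, invoke the classical robustness of low-order normal forms under higher-order perturbations to conclude that the local dynamics coincide.

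For stage one I would follow the route sketched at the end of Section~\ref{sec:HH} and in \cite{Har-Kri-Wal-Wu-06}. Any solution in the local $C^1$ center-unstable manifold stays in a neighbourhood of the origin for all time and hence can be extended backwards with uniformly bounded state-dependent delays, which are moreover bounded away from $t$ by \eqref{eq:alphabd}. Repeated differentiation of \eqref{eq:twostatedep}, combined with bounds of the form \eqref{eq:u3bd}, then shows that $t \mapsto u(t)$ is $C^k$ on each such solution. The task is to transfer this temporal smoothness into spatial $C^k$ regularity of the local invariant manifold as a graph in the Banach manifold $X_F \subset C^1$ of \cite{Walther04}, for example via a graph-transform/fibre-contraction argument applied to the time-$1$ semiflow on $X_F$, where the restriction of the right-hand side becomes $C^\infty$.

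Given such a $C^k$ center manifold, stage two becomes a direct consequence of Theorem~\ref{prop:expansion}: subtracting the state-dependent vector field from the truncated polynomial vector field in \eqref{eq:expfull} yields a remainder $\cR(t)$ that is $\cO(\|u\|_5^4)$, and hence $\cO(\|v\|^4)$ when $v$ denotes a coordinate on the four-dimensional center manifold at a Hopf--Hopf point $\HH_j$. Applying the center-manifold reduction of \cite{Bel-Cam-94,Wu-Guo-13} to both problems therefore produces ODEs on $\R^4$ with identical coefficients up to order three, so the normal-form classification of \cite{Kuz-04-Book} returns the same Hopf--Hopf type, the same tangent directions and coefficients for the emerging torus-bifurcation curves, and the same leading-order resonance-tongue structure. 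Stage three is then the standard topological-equivalence statement for truncated normal forms, whose hypotheses reduce to nondegeneracy of the cubic coefficients that we compute numerically.

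The hard part will be stage one: rigorously upgrading the $C^1$ center-manifold results of \cite{Kri-06, Qes-Wal-09, Stu-12} to $C^3$ (or higher) in the state-dependent setting. The core obstruction is that $\phi \mapsto F(\phi) = -\gamma\phi(0) - \kappa_1\phi(-a_1 - c\phi(0)) - \kappa_2\phi(-a_2 - c\phi(0))$ is not Fr\'echet differentiable on the ambient space $C$, since a single derivative formally demands the evaluation of $\phi'$ at a state-dependent point. Working on $X_F \subset C^1$ circumvents the first derivative, but each additional derivative requires propagating one more order of regularity through the solution manifold, and verifying that the attendant higher-order tangent bundles are well-defined and contract under the time-$1$ map is where the genuinely new analysis is needed. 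A successful execution of this $C^k$ Hadamard--Perron argument in $X_F$ would render stages two and three essentially routine given Theorem~\ref{prop:expansion}, thereby converting the conjecture into a theorem.
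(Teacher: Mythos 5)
The statement you are asked to prove is stated in the paper as a \emph{conjecture}, and the paper deliberately does not prove it: the authors write that the remainder term is conjectured, but not proved, to be negligible, and that a proof ``require[s] at least $C^3$ regularity of (the solutions in) the manifold,'' whereas the best available result for center manifolds of state-dependent DDEs is only $C^1$ \cite{Kri-06}. Your proposal correctly reconstructs the paper's own heuristic route --- temporal $C^k$ smoothness of solutions on the center-unstable manifold via backward extension and bounded delays, a hoped-for $C^k$ time-$1$ map, and then the standard reduction applied to the truncated expansion from Theorem~\ref{prop:expansion} --- and it correctly identifies the same obstruction the authors identify. But it does not close that obstruction. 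Your stage one explicitly ends with ``this is where the genuinely new analysis is needed,'' which means the argument is a research programme, not a proof: the fibre-contraction/graph-transform step that would convert pointwise temporal smoothness of individual solutions into $C^3$ spatial regularity of the invariant manifold in the solution manifold $X_F$ is precisely the open problem, and nothing in the proposal (or in the cited literature) supplies it. Since stages two and three are conditional on stage one, the whole argument is conditional on an unproved hypothesis.

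Two further points deserve care even if the $C^3$ manifold were granted. First, stage two silently identifies the center manifold (and its third-order jet) of the state-dependent DDE with that of the truncated constant-delay DDE \eqref{eq:dde3}; these live over different phase spaces ($\tau=a_2+\tfrac{a_1}{\gamma}(\kappa_1+\kappa_2)$ versus $\tau=3a_2$), the functional $F$ of \eqref{eq:F} is not Fr\'echet differentiable on $C$, and Theorem~\ref{prop:expansion} is an identity along solutions on the manifold, not an identity of functionals --- so the claim that the two reduced ODEs have identical coefficients up to order three needs an argument, not just the bound $\cR(t)=\cO(\|u\|_5^4)$. Second, the conjecture as stated concerns the \emph{local dynamics}, not merely the normal-form coefficients; your stage three invokes topological equivalence of truncated normal forms, but for the Hopf--Hopf bifurcation the truncated normal form is famously \emph{not} always topologically equivalent to the full system (the fate of the invariant tori and the resonance structure are not determined by the third-order truncation), so even in the ODE setting the strongest available conclusion is weaker than ``the local dynamics are determined solely by the expansion.'' In short: your plan is a faithful and sensible elaboration of the paper's own sketch, but it proves nothing the paper does not already assert heuristically, and the statement remains a conjecture.
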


Specifically for the Hopf-Hopf bifurcations of interest,
from now on we consider only the constant-delay DDE we derived to third order in \eqref{eq:expfull}.
Not using the difference operator $L$, it takes the form
\begin{align}  \label{eq:dde3}
& u'(t) = \\ \notag
& - \gamma u(t)- \kappa_1 u(t-a_1)-\kappa_2 u(t-a_2)
-\sum_{i=1}^2 \kappa_i cu(t)\Bigl[\gamma u(t-a_i)+\sum_{j=1}^2\kappa_j u(t - a_i\! -a_j)\Bigr]\\ \notag
& -\sum_{i,j=1}^2\kappa_i\kappa_j c^2u(t)u(t-a_i)\Bigl[\gamma u(t-a_i\!-a_j)+\sum_{m=1}^2\kappa_m u(t-a_i\!-a_j\!-a_m)\Bigr]\\
& \mbox{}\hspace{-0.5em}-\frac12(cu(t))^2\sum_{i=1}^2\!\kappa_i\Bigl[\gamma^2u(t-a_i)+2\gamma\!\sum_{j=1}^2\!\kappa_ju(t-a_i\!-a_j)
+\!\!\!\sum_{j,m=1}^2\!\!\!\kappa_j\kappa_mu(t-a_i\!-a_j\!-a_m)\Bigr]. \notag
\end{align}
We remark that this way of writing the constant-delay DDE is
convenient for the implementation of the DDE-BIFTOOL normal form
computations which require a DDE with constant delays, and in the
supplemental material as \texttt{sys\_cub\_rhs} we provide a
DDE-BIFTOOL system definition of \eqref{eq:dde3}.  However, our own
Hopf-Hopf normal form code \NormalForm works directly from the
state-dependent DDE \eqref{eq:twostatedep}, and computes
\eqref{eq:dde3} from \eqref{eq:twostatedep} using symbolic
differentiation as the first step for deriving the normal form
parameters.

\subsection{Center manifold reduction and resulting normal form}
\label{sec:centreman}

The next step is to derive the normal form for the constant-delay DDE
\eqref{eq:dde3}. For constant-delay DDEs there are well established
techniques for deriving normal forms through center manifold
reductions.  To the best of our knowledge,
the Hopf-Hopf bifurcation for a constant-delay DDE was
first elaborated in B\'elair and Campbell \cite{Bel-Cam-94}, but here we
follow the derivation of Wu and Guo \cite{Wu-Guo-13}.  The main idea
in this construction is to study the restriction of the semi-flow of
\eqref{eq:dde3} to the center manifold at the point of the Hopf-Hopf
bifurcation.  On the center manifold the flow satisfies an ODE in
four-dimensional space.  The reduction to normal form for Hopf-Hopf
bifurcations of ODEs is well known, and we follow Kuznetsov
\cite{Kuz-04-Book} to determine the type of Hopf-Hopf bifurcation that
occurs.

The algebraic steps to determine the normal form are detailed in
Appendix~\ref{app:NormalForm} in the supplementary materials, and we implemented our own Matlab code
\NormalForm which uses symbolic differentiation to compute the expansion of the state-dependent DDE
\eqref{eq:twostatedep} described in \sref{sec:expansion}, and then to
evaluate the normal form expressions for the resulting constant delay
DDE \eqref{eq:dde3}.
To determine the
location of the codimension-two Hopf-Hopf points under consideration,
we start from an approximate location and solve for
$(\kappa_1,\kappa_2,\omega_1,\omega_2)$ so that the pair of
frequencies $\omega_1\ne\omega_2$ both solve \eq{hopflocation}
simultaneously for the same pair of parameter values
$(\kappa_1,\kappa_2)$. Our auxiliary routine \texttt{findHH} uses the Matlab
function \texttt{fminsearch} to minimise
$$f(\kappa_1,\omega_1,\kappa_2,\omega_2)=\sum_{j=1}^2\Bigl(\gamma+\kappa_1\cos(a_1\omega_j)+\kappa_2\cos(a_2\omega_j)\Bigr)^2
+\Bigl(\omega_j-\kappa_1\sin(a_1\omega_j)-\kappa_2\sin(a_2\omega_j)\Bigr)^2,$$
since this function contains the real and imaginary parts of two
copies of \eq{hopflocation}. In this way, we are able to find the
Hopf-Hopf point essentially to machine precision (we use tolerances of
$10^{-14}$).  At the Hopf-Hopf point we then evaluate the derivatives
and functions needed to obtain the center manifold coefficients
$g_{lsrk}^j$ in \sref{sec:code} of the supplemental materials, where we
employ symbolic
differentiation to avoid numerical errors.  Thus, we expect that our
normal form parameter calculations should be accurate essentially to
machine precision, and certainly to eight or more significant figures.

Recently, Wage \cite{Wage14} implemented an extension \texttt{ddebiftool\_nmfm} for DDE-BIFTOOL
to compute normal form coefficients at local bifurcations of steady
states in constant-delay DDEs. This applies a sun-star calculus based
normalisation technique to compute the normal form and center manifold
coefficients together, as elaborated for constant-delay DDEs by
Janssens~\cite{Janssens10}.
The DDE-BIFTOOL implementation only applies to constant-delay DDEs, and so cannot be applied directly to \eq{eq:twostatedep}. However, we can use DDE-BIFTOOL to compute the normal forms of the Hopf-Hopf points of the expanded constant-delay DDE \eq{eq:dde3}.
The difference between the DDE-BIFTOOL implementation (sun-star calculus approach to compute normal form and center manifold coefficients together) and our approach (center manifold reduction first, then compute normal form of resulting ODE system) results in intermediate coefficients being
scaled differently, but the final normal form coefficients computed by
both methods should agree. For the DDE-BIFTOOL computations it is
suggested to supply a user-defined routine to compute higher-order
derivatives. However, with nine delays in the constant-delay DDE
\eq{eq:dde3}, determining these derivatives would be a formidable
task, and so we use the default DDE-BIFTOOL finite-difference
derivative approximations. As an error control this computes the
normal form coefficients twice with finite difference approximations
of different order. However, in our experience this error estimate is
often misleading as the actual errors are usually much larger than the
estimate, as we will see in the next section.

\subsection{Hopf-Hopf normal forms}
\label{sec:NormalForm}

\begin{table}[t]
\begin{center}
\begin{tabular}{|c|c||c|c|c|c|} \hline
& Computed & \multicolumn{4}{|c|}{\rule{0pt}{2.3ex}DDE-BIFTOOL} \\ \cline{3-6}
& Normal Form & \rule{0pt}{2.3ex}$H_1$ High & $H_1$ Low & $H_u$ High & $H_u$ Low \\ \hline
$\kappa_1$ & 2.080920227069894 & \multicolumn{2}{|c|}{2.080905301795540} & \multicolumn{2}{|c|}{2.080662320398254} \\
$\kappa_2$ & 3.786800923405767 & \multicolumn{2}{|c|}{3.786811738802836} & \multicolumn{2}{|c|}{3.786929718494380} \\ \hline
$\omega_1$ & 2.487102830659818 & \multicolumn{2}{|c|}{2.487103286770640} & \multicolumn{2}{|c|}{1.582142631415513} \\
$\omega_2$ & 1.582152129599611 & \multicolumn{2}{|c|}{1.582151566193548} & \multicolumn{2}{|c|}{2.487110459273053} \\ \hline
$\vartheta$ & \phantom{-}5.291049995477200 & \!\phantom{-}5.2909997813\! & \!\phantom{-}5.2909980111\! & \!-0.0222756426\! &  \!-0.0222756534\! \\
$\delta$ & -0.022289571330147 & \!-0.0222816360\! & \!-0.0222817195 & \!\phantom{-}5.2909133110\! & \!\phantom{-}5.2909132195\!
\\ \hline
\end{tabular}
\end{center}
\caption{Values of $\kappa_i$ and $\omega_i$ at the Hopf-Hopf
bifurcation $\HH_1$, seen in \fref{fig:hopfcurves}, and the parameters
$\vartheta$ and $\delta$ that define the scaled truncated amplitude
equation \eqref{K8.112q}.  The values in the first column are computed
with our Matlab code \NormalForm  applied to \eq{eq:twostatedep}, which implements the procedure
described in Appendix~\ref{app:NormalForm}.  The other columns are
produced with the normal form extension of DDE-BIFTOOL, applied to the constant-delay DDE \eq{eq:dde3}
to obtain four different approximations, two on each of the two intersecting branches
of Hopf bifurcations, one from a low order approximation finite
difference approximation to the derivatives and one using a
higher-order approximation. The matlab code to generate all output is supplied in the Supplementary Materials.}
\label{tab:HH1}
\end{table}

\begin{table}[t]
\setlength{\tabcolsep}{3pt}
\begin{center}
\small
\begin{tabular}{|c|c|c|c|} \hline
& $\HH_1$ & $\HH_2$ & $\HH_3$ \\ \hline
$\kappa_1$ & 2.080920227069894 & $5.608860749294630$ & $9.284862308872761$ \\
$\kappa_2$ & 3.786800923405767 & $2.643352614515402$ & $4.403906490530705$ \\ \hline
$\omega_1$ & 2.487102830659818 & $6.608351858283422$ & $10.93073224661102\tr{9}$  \\
$\omega_2$ & 1.582152129599611 & $1.765757669232216$ & $1.952009077103193$ \\ \hline
\rule{0pt}{2.6ex}$\widetilde{g}_{2100}^1=\tfrac12g_{2100}^1$ &  $-0.81417665\tr{2897697} - 0.00407087\tr{5580656}i$ & $-8.59821703\tr{3379171} -10.3402562\tr{0151920}i$ & $\ph{-}8.25785960\tr{9402708} -81.8392092\tr{40317913}i$ \\
$\widetilde{g}_{1011}^1=g_{1011}^1$ & $-0.72563615\tr{0423584} + 0.26699379\tr{1853270}i$  & $-4.14512262\tr{4265354} - 0.48508142\tr{7768031}i$ & $-20.2850232\tr{60567193} +11.4745454\tr{26007372}i$ \\
$\widetilde{g}_{1110}^2=g_{1110}^2$ & $-0.45302394\tr{1909892} - 0.29997922\tr{8722909}i$ & $\phantom{-}1.74982076\tr{7058454} - 7.92866388\tr{4389642}i$ & $\ph{-}31.0314747\tr{69824001} -74.3567344\tr{87707911}i$ \\
\rule[-1.2ex]{0pt}{0pt}$\widetilde{g}_{0021}^2=\tfrac12g_{0021}^2$ & $-0.13405924\tr{9130964} - 0.29906145\tr{8302696}i$ & $-1.42981504\tr{4056368} - 0.22951923\tr{5629436}i$ & $-0.26054578\tr{7811855} - 0.38071817\tr{0546475}i$ \\ \hline
\rule{0pt}{2.6ex}$G_{2100}^1(0)$ & $-0.69871613\tr{3454478} - 0.28257330\tr{2492288}i$ & $-7.50609582\tr{7847942} - 4.15081310\tr{4539677}i$ & $-16.8534773\tr{87548306} -28.0243853\tr{52455454}i$ \\
$G_{1011}^1(0)$ & $-0.51573055\tr{8790600} - 0.23247968\tr{6193008}i$ & $-5.26325881\tr{5778850} + 0.05175630\tr{9540828}i$ & $-21.3834727\tr{31029173} +12.4878724\tr{82305509}i$ \\
$G_{1110}^2(0)$ & $\phantom{-}0.01557408\tr{3096157} - 0.46117993\tr{2732848}i$  & $\phantom{-}5.55956094\tr{1739460} - 2.01536072\tr{4708596}i$ & $\ph{-}50.3666025\tr{28818383} -66.5262024\tr{69092752}i$ \\
$G_{0021}^2(0)$ & $-0.09747225\tr{2054214} - 0.22785268\tr{9753873}i$  & $-0.65677277\tr{0545076} - 0.20185598\tr{4151354}i$ & $-0.20383503\tr{6817261} + 0.19032437\tr{4649657}i$ \\ \hline
$p_{11}$ & $-0.698716133454477$  & $-7.506095827847883$ & $-16.853477387548608$ \\
$p_{12}$ & $-0.515730558790600$  & $-5.263258815778782$ & $-21.383472731028913$  \\
$p_{21}$ & $\phantom{-}0.015574083096158$  & $\phantom{-}5.559560941739119$ & $\ph{-}50.366602528819492$ \\
$p_{22}$ & $-0.097472252054214$  & $-0.656772770545075$ & $-0.2038350368172633\tr{2}$ \\  \hline
$\vartheta$ & $\phantom{-}5.291049995477200$  & $\ph{-}8.013820078762780$ & $\ph{-}104.90577608695922$ \\
$\delta$ & $-0.022289571330147$  & $-0.740672790388973$ & $-2.9884991311069409\tr{7}$
\\ \hline
\end{tabular}
\caption{The locations and the main normal form and amplitude equation
parameters at the three Hopf-Hopf points $\HH_j$ shown in
\fref{fig:hopfcurves}, computed with our Matlab code
\NormalForm.}
\label{tab:HHj}
\end{center}
\end{table}

We perform the normal form analysis for the parameter values given
in \eq{eq:oriannaparams}, which are the same as used in
\fref{fig:hopfcurves} and throughout this paper.  For these parameter
values the locations of the Hopf-Hopf points and the resulting normal
form parameters can be found as described in the previous section.

In Table~\ref{tab:HH1} we state the results of five different
computations for the first Hopf-Hopf point $\HH_1$.  The normal form
parameters $\vartheta$ and $\delta$ define coefficients in the scaled
truncated amplitude equations
\begin{equation} \label{K8.112q}
\begin{split}
\xi_1' &= \xi_1(\mu_1 -\xi_1-\vartheta\xi_2),\\
\xi_2' &= \xi_2(\mu_2 -\xi_2-\delta\xi_1),
\end{split}
\end{equation}
for $\xi_j\geq0$,
which determine the dynamics and bifurcations seen as
$\mu_j=\Re(\lambda_j)$ are varied close to the Hopf-Hopf point where
$\mu_1=\mu_2=0$. The derivation of \eq{K8.112q} is given in
Appendix~\ref{app:NormalForm}, culminating in equation \eq{K8.112}.

The first column of Table~\ref{tab:HH1} gives the values computed with
our \NormalForm code described in
\srefs{sec:expansion}-\ref{sec:centreman}; for comparison the other
columns give values computed with DDE-BIFTOOL's normal form
extension. DDE-BIFTOOL finds Hopf-Hopf points by checking along a
branch of Hopf bifurcations for where a second pair of characteristic
values crosses the imaginary axis. Thus, with DDE-BIFTOOL, it is
possible to obtain two different approximations to the same Hopf-Hopf
point by searching along each of the two intersecting branches of Hopf
points; in Table~\ref{tab:HH1} we give the locations of $\HH_1$ found
on the Hopf curves $H_1$ and $H_u$ (see \fref{fig:hopfcurves}). As
noted in \sref{sec:centreman}, when computing derivatives via
finite-differences, DDE-BIFTOOL provides two different
finite-difference approximations to give an indication of the
error. The parameters $\vartheta$ and $\delta$ computed on $H_1$ with
the two different finite difference approximations agree to a relative
error of about $10^{-6}$, indicating that the finite-difference
approximations are both quite accurate, and similarly on the branch
$H_u$.  However, the agreement is not so good when we compare the
answers obtained on the two branches. Firstly, we see that the values
of $\vartheta$ and $\delta$ are swapped on the two branches, which is
correct and natural.  DDE-BIFTOOL takes as $\omega_1$ the value of
$\omega$ for the Hopf bifurcation occurring on the branch one is
searching along, and takes as $\omega_2$ the value of $\omega$ for the
second pair of characteristic values crossing the imaginary
axis. Hence, the values of $\omega_1$ and $\omega_2$ are swapped when
the search is switched from one branch to the other, and this results
in the values of $\vartheta$ and $\delta$ also being swapped. However,
even after swapping, we see that the values of $\vartheta$ and
$\delta$ calculated by DDE-BIFTOOL only agree to about four
significant figures between the two branches. This also indicates the
relative accuracy to which the values of $\kappa_1$, $\kappa_2$,
$\omega_1$ and $\omega_2$ for the Hopf-Hopf point agree on the two
branches.  So it seems that the accuracy of the DDE-BIFTOOL computed
normal forms is limited by the accuracy to which DDE-BIFTOOL computes
the location of the Hopf-Hopf points, and not by the accuracy to which
it computes the normal forms themselves.

We can also swap the $\omega_j$ in the computation of the normal forms
in our code \NormalForm.  Because of the symmetry between the
parameters, for the index $j=1$ or $2$ so that $3-j$ indicates the
other index, swapping the $\omega$ values $\omega_j \leftrightarrow
\omega_{3-j}$ exchanges $\vartheta$ and $\delta$ and the other normal
form coefficients (see
Appendix~\ref{app:NormalForm}) as follows:
$$g_{lsrk}^j \leftrightarrow g_{rkls}^{3-j}, \quad \widetilde{g}_{lsrk}^j
\leftrightarrow \widetilde{g}_{rkls}^{3-j}, \quad G_{lsrk}^j
\leftrightarrow G_{rkls}^{3-j}, \quad p_{ij} \leftrightarrow p_{3-i3-j}.$$
Because we find the
Hopf-Hopf point to machine precision and evaluate the derivatives
symbolically, when the $\omega_j$ are exchanged, we find that the
respective normal form coefficients are identical to machine
precision. In fact, the idea of swapping the $\omega_j$ and checking
the normal form coefficients and parameters turned out to be very
useful during the checking and debugging of our code.

Table~\ref{tab:HHj} gives the normal form parameters for the first
three Hopf-Hopf points $\HH_j$ seen in \fref{fig:hopfcurves}, and also
some of the more important intermediate coefficients described in
Appendix~\ref{app:NormalForm}. Here we report only one set of normal
form parameters for each Hopf-Hopf point $\HH_j$ computed with our
Matlab code \NormalForm.  We always take $\omega_1 > \omega_2$ and,
since the period of the periodic orbit bifurcating from the curve
$H_u$ is always the largest, this corresponds to taking $\omega_1$ as
the frequency of the Hopf bifurcation $H_j$ for $j=1,2$ or $3$, and
$\omega_2$ as the frequency of the Hopf bifurcation $H_u$.
Our normal form calculations give the following overall result.

\begin{propose} \label{prop:nf}
At each of the three Hopf-Hopf points $\HH_1$, $\HH_2$ and $\HH_2$
\begin{itemize}
\item[(i)]
$p_{11}<0$ and $p_{22}<0$, which means that normal form coefficients
$\vartheta$ and $\delta$ are sufficient to
determine the type of the Hopf-Hopf bifurcation that occurs
\cite{Kuz-04-Book};
\item[(ii)]
the non-degeneracy conditions \textrm(HH.0)-\textrm(HH.6) in
Appendix~\ref{app:NormalForm} hold; and
\item[(iii)]
$\vartheta>0>\delta$, which corresponds
to subcase III of the simple case as described in Sec.~8.6.2 of
\cite{Kuz-04-Book}; see also Appendix~\ref{sec:DHopfUnfold}.
\end{itemize}
\end{propose}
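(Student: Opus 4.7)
The plan is to verify this proposition computationally using the normal form machinery developed in Sections~\ref{sec:expansion}--\ref{sec:centreman} and summarized in Table~\ref{tab:HHj}. Since each $\HH_j$ is a specific codimension-two point with explicit numerical coordinates, the proposition amounts to checking three finite collections of inequalities and non-vanishing conditions at six decimal points $(\kappa_1,\kappa_2,\omega_1,\omega_2)$ determined by minimizing the residual of \eq{hopflocation}.

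First I would locate each $\HH_j$ to machine precision. Concretely, I would apply the auxiliary routine \texttt{findHH} to minimize the functional
\begin{equation*}
f(\kappa_1,\omega_1,\kappa_2,\omega_2)=\sum_{j=1}^2\Bigl(\gamma+\kappa_1\cos(a_1\omega_j)+\kappa_2\cos(a_2\omega_j)\Bigr)^2+\Bigl(\omega_j-\kappa_1\sin(a_1\omega_j)-\kappa_2\sin(a_2\omega_j)\Bigr)^2,
\end{equation*}
starting from the approximate positions of $\HH_1,\HH_2,\HH_3$ read from the Hopf curves of \fref{fig:hopfcurves}, and imposing $\omega_1>\omega_2$ to fix the labeling convention where $\omega_1$ corresponds to the branch $H_j$ and $\omega_2$ to $H_u$. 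Then, at each of these three parameter points, I would run \NormalForm on \eq{eq:twostatedep}: the code performs the expansion of Theorem~\ref{prop:expansion} symbolically to obtain \eq{eq:dde3}, projects onto the center eigenspace spanned by the two conjugate pairs $\pm i\omega_1,\pm i\omega_2$, and computes the coefficients $g_{lsrk}^j$, $\widetilde g_{lsrk}^j$, $G_{lsrk}^j$, $p_{ij}$, and ultimately $\vartheta,\delta$ as described in Appendix~\ref{app:NormalForm}.

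Claims (i) and (iii) then reduce to inspecting the sign pattern of the outputs listed in Table~\ref{tab:HHj}. For $\HH_1$, $\HH_2$, $\HH_3$ I read off $p_{11}\approx -0.699,\,-7.506,\,-16.853$ and $p_{22}\approx -0.097,\,-0.657,\,-0.204$, all strictly negative, which per \cite{Kuz-04-Book} confirms that $(\vartheta,\delta)$ alone determine the unfolding type and establishes (i). Similarly, $\vartheta\approx 5.29,\,8.01,\,104.91$ and $\delta\approx -0.022,\,-0.741,\,-2.988$ give $\vartheta>0>\delta$ in each case, placing the bifurcation in subcase III of the simple case of Section~8.6.2 of \cite{Kuz-04-Book}, establishing (iii). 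For (ii), I would traverse the list (HH.0)--(HH.6) from Appendix~\ref{app:NormalForm}: (HH.0) is the non-resonance condition $k\omega_1\ne \ell\omega_2$ for small integers $k,\ell$, which I would check directly from the computed ratio $\omega_1/\omega_2$ (e.g., $\approx 1.572$ at $\HH_1$, far from any low-order resonance); the remaining conditions amount to non-vanishing of $\Re\,G_{2100}^1(0)$, $\Re\,G_{0021}^2(0)$, and the determinant of the $p_{ij}$ matrix, which I would evaluate from the table entries.

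The main obstacle is not algebraic but numerical-analytic: justifying that the computed coefficients are accurate enough for the sign determinations to be rigorous. Because $\delta$ is quite small at $\HH_1$ ($\approx -0.022$), a cross-check is essential, and I would rely on two independent safeguards built into our workflow. First, \NormalForm uses symbolic differentiation throughout, so the only source of numerical error is the precision to which $(\kappa_1,\kappa_2,\omega_1,\omega_2)$ are determined; since \texttt{fminsearch} achieves tolerances of $10^{-14}$, one expects eight or more correct significant figures in the coefficients. Second, the symmetry check $\omega_1\leftrightarrow\omega_2$ induces the involution $g_{lsrk}^j\leftrightarrow g_{rkls}^{3-j}$, etc., and agreement to machine precision under this swap provides strong verification. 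A further independent check is the DDE-BIFTOOL computation applied to the constant-delay expansion \eq{eq:dde3} (Table~\ref{tab:HH1}), whose four approximations agree with \NormalForm to within the expected accuracy of the finite-difference scheme. With these checks in place, the signs in (i) and (iii) are unambiguous and the non-degeneracy quantities in (ii) are bounded away from zero, completing the proposition.
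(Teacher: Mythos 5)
Your proposal is correct and follows essentially the same route as the paper: Proposition~\ref{prop:nf} is established there by the numerical evaluation of the normal form coefficients via \NormalForm at the machine-precision Hopf-Hopf points (Tables~\ref{tab:HH1} and~\ref{tab:HHj}), with the sign pattern of $p_{11}$, $p_{22}$, $\vartheta$, $\delta$ read off directly and accuracy vouched for by the same three safeguards you cite (symbolic differentiation, the $\omega_1\leftrightarrow\omega_2$ swap check, and the DDE-BIFTOOL cross-computation on \eq{eq:dde3}). No substantive difference from the paper's argument.
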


In the normal form parameters plane of
$(\mu_1,\mu_2)=(\Re(\lambda_1),\Re(\lambda_2))$, Hopf bifurcations
occur along the horizontal $\mu_1$-axis with the bifurcating periodic
orbit existing in the upper half plane, and along the vertical
$\mu_2$-axis with the bifurcating periodic orbit existing in the right
half plane.  Proposition~\ref{prop:nf} implies that there are
two curves of torus bifurcations emerging from
the origin, which is the codimension-two Hopf-Hopf point: one in the
first quadrant and one in the fourth quadrant, with the torus existing
in the convex cone between them. On the upper torus bifurcation curve
the torus bifurcates from the periodic orbit that exists in the upper
half plane, and on the lower torus bifurcation curve it bifurcates
from the periodic orbit which exists in the right half plane.  The
five regions of generic phase portraits are labelled in panel III of
Fig.~8.25 in \cite{Kuz-04-Book} (but notice a typo: 13 should be 12), and
the corresponding generic phase portraits are given in Fig.~8.26 of
\cite{Kuz-04-Book}.

\begin{figure}[t!]
\begin{center}
\includegraphics{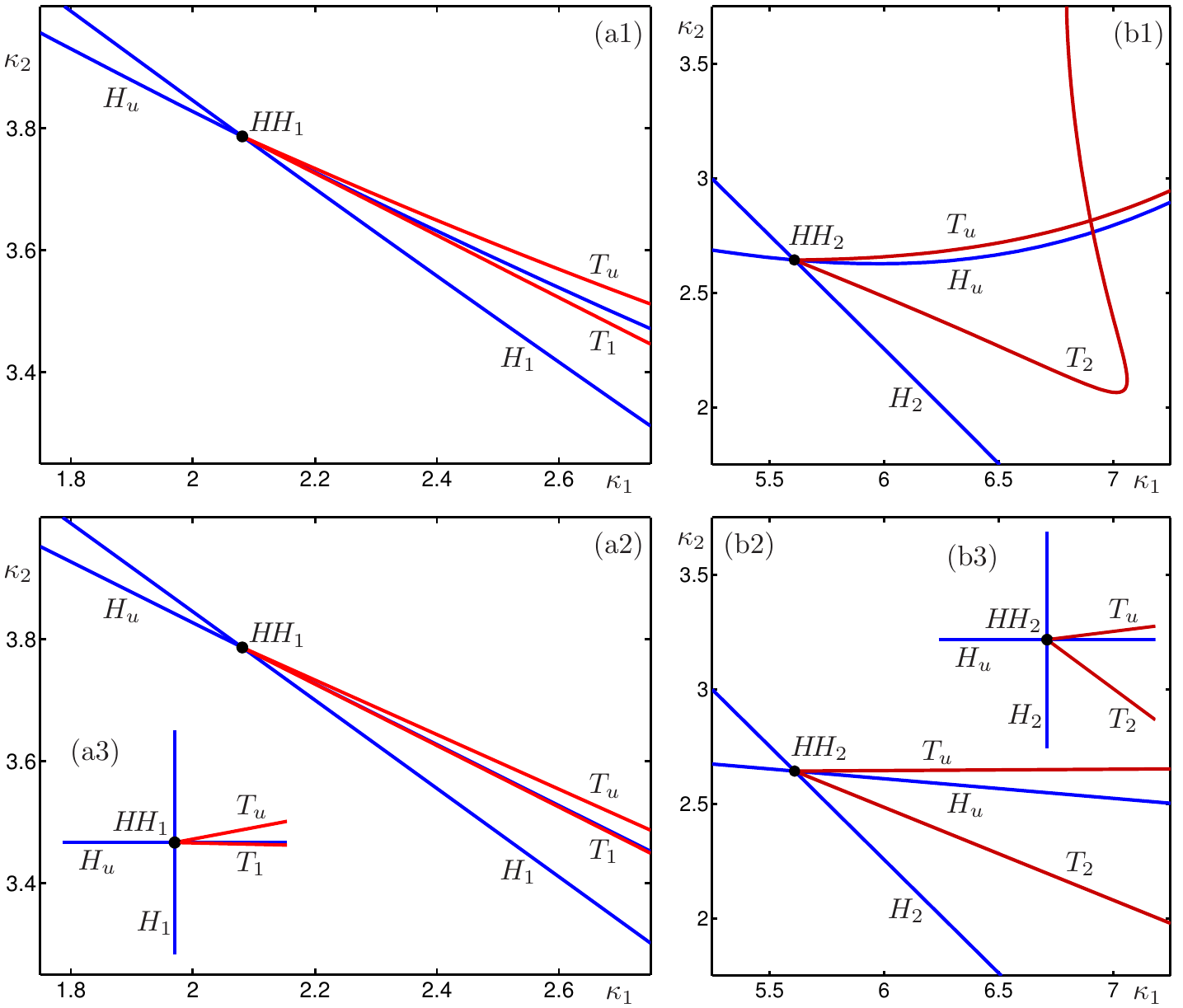}
\caption{Comparison in the $(\kappa_1,\kappa_2)$-plane near $\HH_1$
and $\HH_2$ between numerically computed torus bifurcation curves for
the state-dependent DDE \eqref{eq:twostatedep} in panels (a1) and
(b1), and their linear approximations in panels (a2) and (b2) obtained
by evaluating the normal form coefficients at the respective Hopf-Hopf
point and applying the coordinate transformation \eq{invtransf}. The
inset panels (a3) and (b3) show the $(\mu_1,\mu_2)$-plane of the
normal form \eq{K8.112q} before this transformation.}
\label{fig:doublehopf}
\end{center}
\end{figure}

\Fref{fig:doublehopf} shows how our normal form calculations manifest
themselves near $\HH_1$ and $\HH_2$. Panels (a1) and (b1) show the
local bifurcation diagrams of the original state-dependent DDE
\eq{eq:twostatedep} as computed with DDE-BIFTOOL \cite{NewDDEBiftool},
consisting of the Hopf bifurcation curve $H_u$ intersecting the Hopf
bifurcation curves $H_1$ and $H_2$ in $\HH_1$ and $\HH_2$ (as in
\fref{fig:hopfcurves}), as well as the associated torus bifurcation
curves $T_u$, $T_1$ and $T_2$. Panels (a2) and (a3) and panels (b2)
and (b3) of \fref{fig:doublehopf} show the results of our normal form
calculations at $\HH_1$ and $\HH_2$, respectively. Panels (a3) and
(b3) show the positions of the curves of torus bifurcation in the
$(\mu_1,\mu_2)$-plane of the normal form \eq{K8.112q}. As was
discussed, $T_u$ lies in the first quadrant and the curves $T_1$ and
$T_2$ each lie in the fourth quadrant. Moreover, the normal form
calculations also give the slope of the torus curves in the
$(\mu_1,\mu_2)$-plane via the actual values of $\vartheta$ and $\delta$
and \eq{eq:T1} and \eq{eq:T2}. In particular, $T_1$ lies very close to
$H_u$ near $\HH_1$ in panel (a3), while $T_2$ is well separated from
$H_u$ near $\HH_2$ in panel (b3). Since the Jacobian matrix defined in
nondegeneracy condition \textrm(HH.6) in
Appendix~\ref{sec:DHopfUnfold} is invertible at each point $\HH_j$, we
can use the coordinate transformation \eq{invtransf} to map the
$(\mu_1,\mu_2)$-plane back to the $(\kappa_1,\kappa_2)$-plane of
\eqref{eq:twostatedep}. The result is shown in panels (a2) and (b2) of
\fref{fig:doublehopf}, where all curves are actually straight lines
that represent the linear approximations, that is, the slopes, of the
respective Hopf and torus bifurcation curves near $\HH_1$ and
$\HH_2$. There is excellent correspondence between the nature, order
and slopes of the respective bifurcation curves illustrated in panels
(a1) and (a2) and in panels (b1) and (b2), respectively.
This fact is clear evidence, over and above the two independent
normal form calculations, that Proposition~\ref{prop:nf} is
correct and indeed represents the Hopf-Hopf normal form of
the full state-dependent DDE \eqref{eq:twostatedep}.

Clearly, the bifurcation curves in the local bifurcation diagrams in
\fref{fig:doublehopf}(a1) and (b1) are actually nonlinear, and this
explains the visible differences with panels (a2) and (b2) further
away from $\HH_1$ and $\HH_2$, respectively. The curvature of the
these bifurcation curves could be captured by computing higher-order
terms in the normal forms, but this is very cumbersome and rarely
done. Rather, we will continue these bifurcation curves numerically
with DDE-BIFTOOL more globally throughout the
$(\kappa_1,\kappa_2)$-plane.  As we will see in the next section, the
full bifurcation diagram is very complicated.

\section{Structure of bifurcating tori}
\label{sec:tori}

The existence of Hopf-Hopf bifurcation points that give rise to
torus bifurcation curves clearly indicates that \eqref{eq:twostatedep}
should feature multi-frequency dynamics and, in particular,
quasi-periodic and locked dynamics on invariant tori.

\begin{figure}[t!]
\begin{center}
\includegraphics{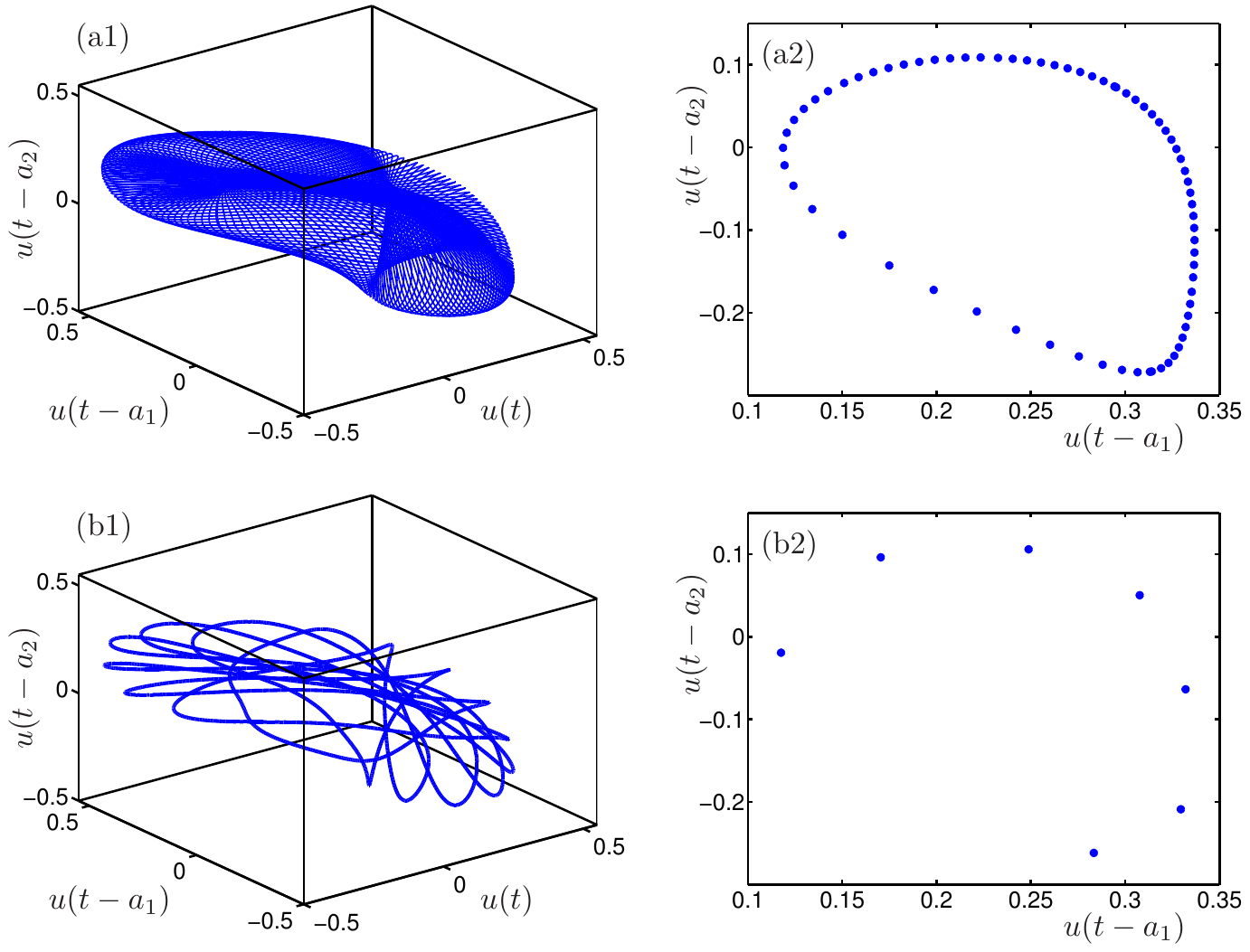}
\caption{Quasi-periodic torus for $\kappa_1=4.44$ in row (a) and
$3\!:\!7$ phase-locked periodic orbit for $\kappa_1=4.409556$ in row
(b), where $\kappa_2=3.0$. Panels (a1) and (b1) show projections onto
$(u(t),u(t-a_1),u(t-a_2))$-space, and panels (a2) and (b2) the
trace in the $(u(t-a_1),u(t-a_2))$-plane of the
Poincar{\'e} return map defined by $u(t)=0$.}
\label{fig:simtori}
\end{center}
\end{figure}

\Fref{fig:simtori} shows two examples of dynamics on an invariant
torus, which were obtained by numerical integration of
\eqref{eq:twostatedep} and after transients have been allowed to die
down. The respective dynamics on the torus are illustrated in the left
column in projection onto the $(u(t),u(t-a_1),u(t-a_2))$-space.  The
right column shows points in the $(u(t-a_1),u(t-a_2))$-plane whenever
$u(t)=0$. In other words, it shows a two-dimensional projection of the
function segments of the Poincar{\'e} return map defined by
$u(t)=0$. This representation in the $(u(t-a_1),u(t-a_2))$-plane has
been chosen to give a good impression of the low-dimensional character
of the tori we encounter, and we refer to it as the Poincar{\'e} trace
for short; see below for more details on how to construct a Poincar\'e
map of a DDE.  In \fref{fig:simtori}(a) the dynamics are
quasi-periodic (or of very high period) so that the shown single
trajectory covers the torus densely; in the Poincar\'e trace this
corresponds to an invariant closed curve, which is filled out denser
and denser as a longer trajectory is computed. An example of locked
dynamics on the torus is given in row (b) of \fref{fig:simtori}. More
specifically, shown is the attracting periodic orbit on the torus (not
shown) in projection onto $(u(t),u(t-a_1),u(t-a_2))$-space in panel
(b1), and the associated Poincar\'e trace in the
$(u(t-a_1),u(t-a_2))$-plane in panel (b2). They show that the locked
periodic orbit forms a $3\!:\!7$ torus knot.

Overall, \fref{fig:simtori} illustrates that two-dimensional invariant
tori of \eqref{eq:twostatedep} can be represented conveniently in
projection onto the three-dimensional $(u(t),u(t-a_1),u(t-a_2))$-space
and by their Poincar{\'e} trace in the $(u(t-a_1),u(t-a_2))$-plane. We
now discuss the choice of Poincar\'e map for the state-dependent
scalar DDE \eqref{eq:twostatedep} in somewhat more detail.  It is easy
to see that $u\equiv 0$ is the unique steady state of
\eqref{eq:twostatedep}.  Equation \eq{eq:alphabd} and the positivity
of the parameters implies that any orbit that does not cross $u=0$
will be eventually monotonic, and also that $u(t)$ and $u'(t)$ cannot
have the same sign on a time interval longer than $\tau$.  Hence,
since all periodic and quasi-periodic orbits cross $u=0$, it is
natural to use this condition for defining the Poincar\'e map.  More
specifically, we define the Poincar\'e section \be \label{eq:Sigma}
\Sigma=\{\phi\in C: \phi(0)=0 \}, \ee which is a codimension-one
subspace of the infinite-dimensional phase space $C$ of
\eqref{eq:twostatedep}. Hence, $\Sigma$ is infinite dimensional
itself, and the local Poincar\'e map $P_\Sigma$ on $\Sigma$ is defined
as the map that takes a downward transversal crossing of zero
($\phi(0)=0$ with $\phi'(0)<0$) to the next such crossing.  The
infinite dimensionality of $\Sigma$ obscures the structure of the
low-dimensional invariant sets (namely periodic orbits and tori) we
wish to visualize, which is why one considers projections of $C$ and,
hence, $\Sigma$.

We consider the projection $\cP: C\to\R^3$ via \be \label{eq:P} \cP
u_t=(u_t(0),u_t(-a_1),u_t(-a_2))=(u(t),u(t-a_1),u(t-a_2)) \in \R^3,
\ee with corresponding projection \be \label{eq:Sigma3}
\cP_\Sigma=\{(0,u(t-a_1),u(t-a_2))\} \cong \{u(t-a_1),u(t-a_2))\} =
\R^2.  \ee This generalises an idea of Mackey and Glass
\cite{Mac-Gla-77}, who were the first to project solutions of DDEs
into finite dimensions by plotting values of $u(t-\tau)$ against
$u(t)$ for a single delay DDE.

For simplicity, we refer to the projected Poincar\'e section also as
$\Sigma$ and, throughout, we consider the invariant objects of the
local Poincar\'e map $P_\Sigma$ defined for points with $u(t)=0$ and
$u'(t)<0$ (to ensure that there is a unique intersection set for
periodic orbits and tori). As was already mentioned, we refer to the
respective intersection set in the $(u(t-a_1),u(t-a_2))$-plane as the
Poincar\'e trace of the invariant object.

We remark that, when the DDE has a sufficient number $d$ of
independent variables (at least three), a convenient alternative
projection from $C$ to $\R^d$ is obtained by projecting the function
segment $u_t\in C$ onto its head-point $u_t(0)=u(t)\in\R^d$.  See
\cite{Gre-Kra-Eng-02, Kra-Gre-03} for an example of this construction
for a laser system with $d=3$. However, this approach is not useful
for visualising the dynamics of \eqref{eq:twostatedep} because $u_t$
is scalar.

\begin{figure}[t!]
\begin{center}
\includegraphics{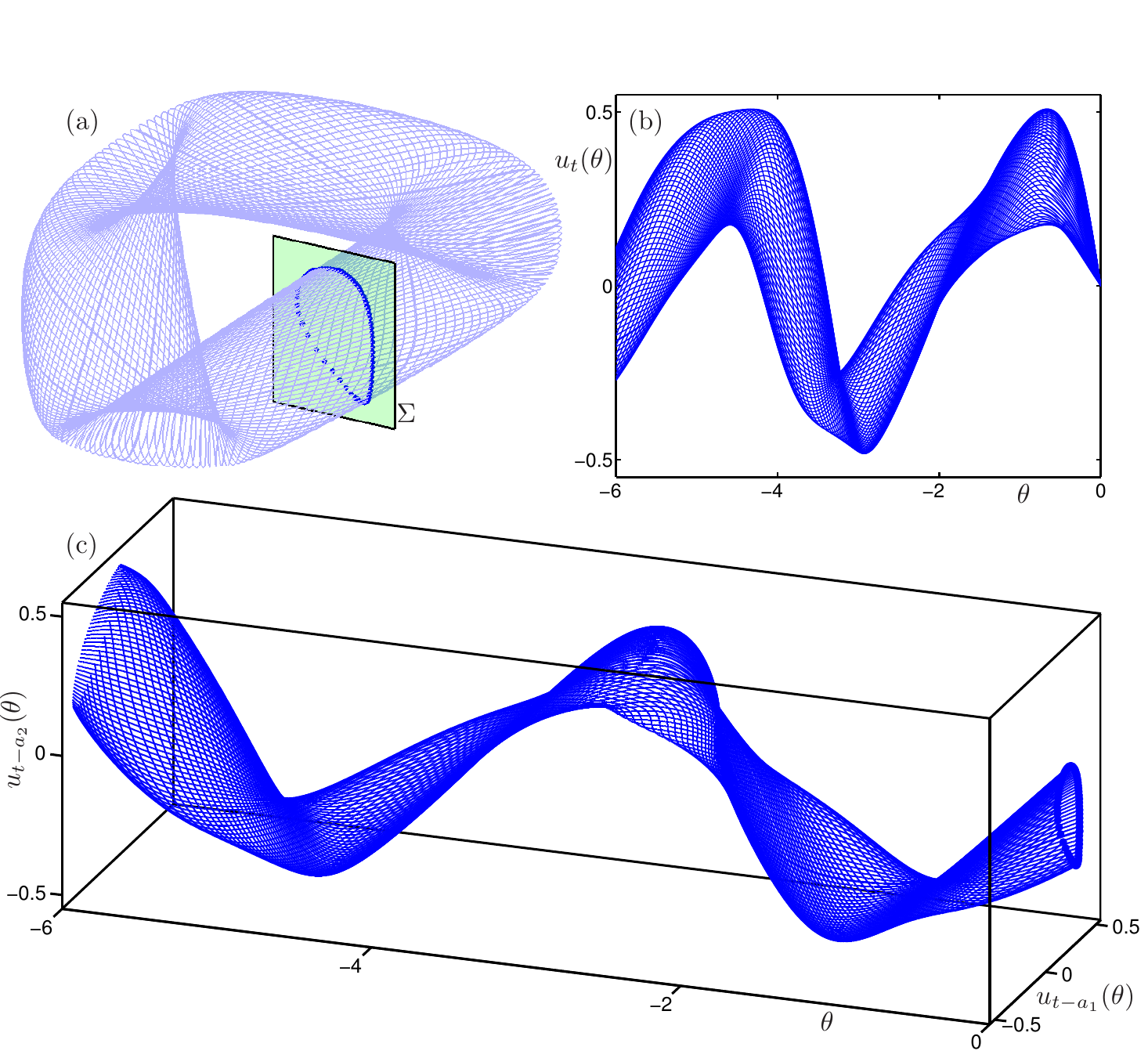}
\caption{Illustration of Poincar{\'e} section and trace for the
quasi-periodic torus for $\kappa_1=4.44$ and $\kappa_2=3.0$ from
\fref{fig:simtori}(a). Panel (a) shows the projection onto
$(u(t),u(t-a_1),u(t-a_2))$-space of a single solution of
\eqref{eq:twostatedep} on the torus (light blue), together with the
trace (blue dots) on the (projected) section $\Sigma$ (green); the
corresponding function segments are shown in panel (b) as functions
$u_t$, and in panel (c) as function segments
$(u_{t-a_1}(\theta),u_{t-a_2}(\theta))$, over the delay interval
$\theta\in[-6,0]$, respectively. In panel (c) the Poincar\'e trace is
seen in the plane for $\theta=0$, which corresponds to $\Sigma$.}
\label{fig:torusqp}
\end{center}
\end{figure}

\Fref{fig:torusqp} illustrates the different projections and
representations with the example of the quasi-periodic torus from
\fref{fig:simtori}(a).  \Fref{fig:torusqp}(a) shows a different view
of the torus in $(u(t),u(t-a_1),u(t-a_2))$-space together with the
Poincar\'e trace in the local section $\Sigma$. This image is very
similar to illustrations one finds in the literature of quasi-periodic
tori of three-dimensional vector fields; in particular, the torus
appears to be smooth and the intersection curve with $\Sigma$ is a
smooth simple closed curve. That we are in fact dealing with a scalar
state-dependent DDE with an infinite dimensional phase space is
illustrated in panels (b) and (c). \Fref{fig:torusqp}(b) shows the
function segments $u_t(\theta)$ corresponding to all the points of the
Poincar\'e trace on $\Sigma$ in the $u(t-a_1),u(t-a_2))$-plane in
panel (a).  That is, the functions segments for the points on the
torus with $u(t)=0$ (or equivalently $u_t(0)=0$) and $u'(t)<0$. Note
that, because the section $\Sigma$ is defined by the condition
$u(t)=0$, all these function segments are defined over the same fixed
time interval $[-a_2, 0] = [-6,0]$, and all end up at the same point
$u(0)=0$.  \Fref{fig:torusqp}(c) shows a different representation of
the function segments associated with the points of the Poincar\'e
trace, with the function segments
$(u_{t-a_1}(\theta),u_{t-a_2}(\theta))$ illustrating the `history
tails' over the time interval $[-6,0]$ associated with the trace in
(the two-dimensional projection of) $\Sigma$.  Notice that in this
representation the invariant torus appears as a cylinder that is swept
out by the function segments in the corresponding orbit under the
local Poincar\'e map $P_\Sigma$, with the Poincar\'e trace seen in the
plane for $\theta=0$ in \fref{fig:torusqp}(c).

\begin{figure}[t!]
\begin{center}
\includegraphics{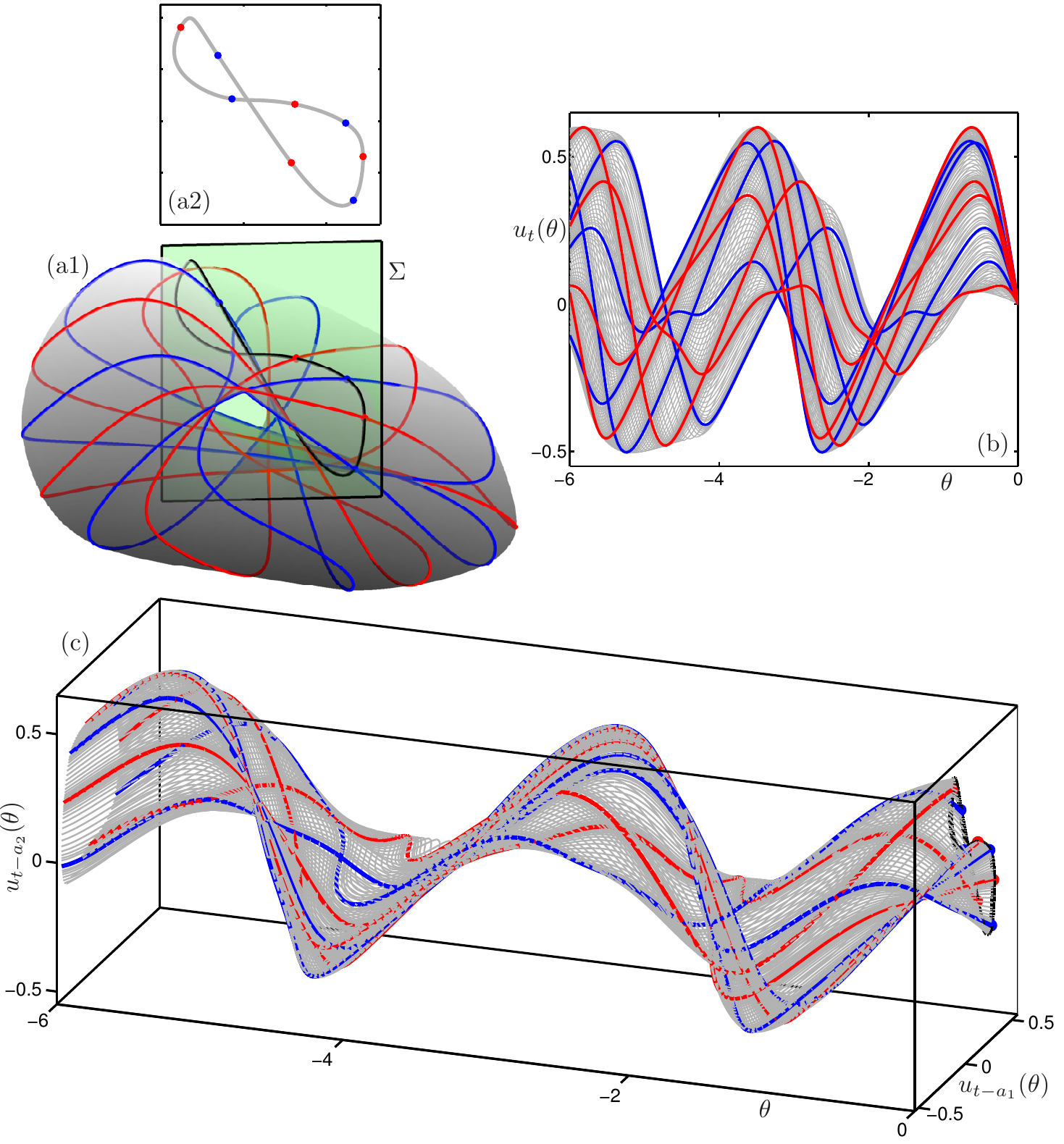}
\caption{Illustration of Poincar{\'e} section and trace for the
$1\!:\!4$ phase-locked torus for $\kappa_1=5.405$ and $\kappa_2=2.45$.
Panel (a1) shows the projection onto $(u(t),u(t-a_1),u(t-a_2))$-space
of the relevant invariant objects, namely, of the stable periodic
orbit (blue), the saddle periodic orbit (red), its unstable manifold
(grey curve), together with the trace on the (projected) section
$\Sigma$ (green).  Panel (a2) shows only the trace of these objects in
$\Sigma$.  The corresponding function segments are shown in panel (b)
as functions $u_t$, and in panel (c) as function segments
$(u_{t-a_1}(\theta),u_{t-a_2}(\theta))$, over the delay interval
$\theta\in[-6,0]$, respectively.
}
\label{fig:toruslocked}
\end{center}
\end{figure}

\Fref{fig:toruslocked} shows an example of a smooth invariant torus
with $1\!:\!4$ phase-locked dynamics on it.  In panels (a)--(c) the
torus is represented in the same manner as the quasi-periodic torus in
\fref{fig:torusqp}.  However, in contrast to \fref{fig:simtori}(b)
that only shows the locked stable periodic orbit on the torus,
\fref{fig:toruslocked} also shows the unstable locked periodic orbit
and its two-dimensional unstable manifold, which together form the
locked invariant torus itself. \fref{fig:toruslocked}(a1) shows the
torus rendered as a surface in $(u(t),u(t-a_1),u(t-a_2))$-space with
the stable and unstable locked periodic orbits lying on it. Also shown
is the section $\Sigma$ and the Poincar\'e trace; for clarity, the
trace is shown on its own in the $(u(t-a_1),u(t-a_2))$-plane in panel
(a2). Associated segments $u_t$ are shown as functions of $\theta$ in
\fref{fig:toruslocked}(b), and in terms of $(u_{t-a_1}(\theta),
u_{t-a_2}(\theta))$ in \fref{fig:toruslocked}(c).

The torus in \fref{fig:toruslocked} gives rise to a single smooth
curve as the trace in the $(u(t-a_1),u(t-a_2))$-plane, on which lie
four points of a stable period-four orbit and four points of an
unstable period-four orbit; see \fref{fig:toruslocked}(a2). The stable
periodic orbit was found by numerical simulation. It was then used to
start a continuation of the periodic orbit in the parameter $\kappa_1$
which yielded, after a fold or saddle-node bifurcation of periodic
orbits, the unstable periodic orbit. This calculation also confirmed
that, as theory predicts, the unstable periodic orbit has exactly one
unstable Floquet multiplier. We extracted the unstable eigenfunction
associated with the unstable periodic orbit on the torus and used it
to define two initial functions in the local unstable manifold of the
periodic orbit (one on each side of the orbit). Then numerical
integration near the periodic point and along the unstable
eigenfunction was used to compute trajectories that lie on the
unstable manifold; associated orbit segments are shown in
\fref{fig:toruslocked}(b) and (c). Careful selection and ordering of
orbit segments on the unstable manifolds (between intersections with
the Poincar\'e section) allowed us to render the torus as a surface in
$(u(t),u(t-a_1),u(t-a_2))$-space in \fref{fig:toruslocked}(a1), and to
draw the corresponding one-dimensional curve in the
$(u(t-a_1),u(t-a_2))$-plane in \fref{fig:toruslocked}(a2).

Again, the representation of locked dynamics on the torus in
\fref{fig:toruslocked} is very reminiscent of what one would expect to
find in a three-dimensional vector field. Notice, however, that --- in
contrast to the quasi-periodic torus in \fref{fig:torusqp} --- the
invariant curve in the $(u(t-a_1),u(t-a_2))$-plane has a point of
self-intersection. The torus in $(u(t),u(t-a_1),u(t-a_2))$-space also
has a curve of self-intersection; see \fref{fig:toruslocked}(c).  This
is due to projection from the infinite-dimensional phase space and a
reminder that we are dealing with a DDE and not a low-dimensional
dynamical system. While self-intersections may occur, we believe that
the chosen Poincar\'e section $\Sigma$ defined by $u(t)=0$ is the most
convenient and natural choice for the study of multi-frequency
dynamics in \eqref{eq:twostatedep}.

\subsection{Resonance tongues and locked tori}
\label{sec:resonance}

\begin{figure}[t!]
\begin{center}
\includegraphics{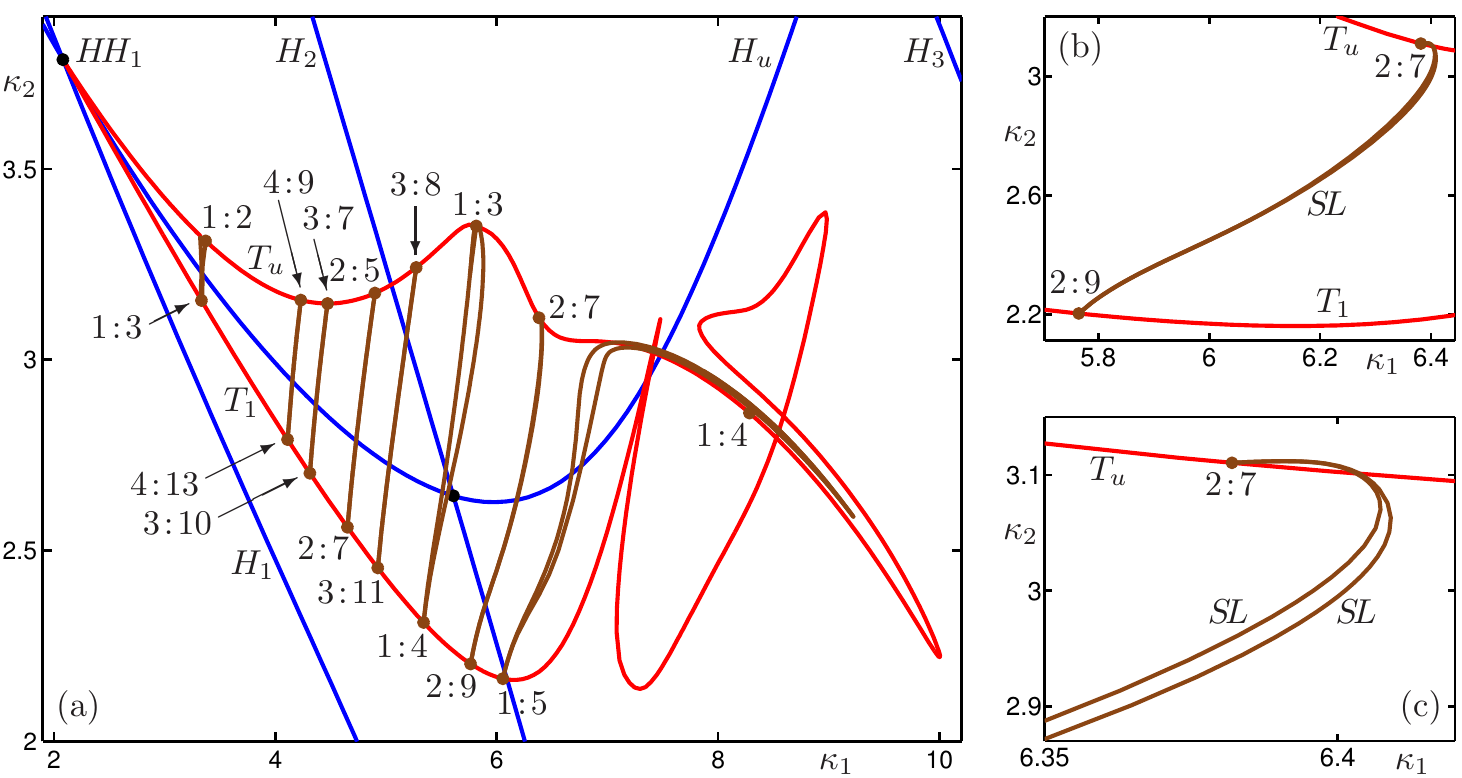}
\caption{The torus bifurcation curves $T_u$ and $T_1$ emerging from
the Hopf-Hopf bifurcation point $\HH_1$ and associated resonance
tongues in the $(\kappa_1,\kappa_2)$-plane (a). Panels (b) and (c) are
successive enlargements of the resonance tongue that connects a
$2\!:\!7$ resonance on $T_u$ with a $2\!:\!9$ resonance on $T_1$.}
\label{fig:biftori}
\end{center}
\end{figure}

Continuation of the two torus bifurcation curves that are known to
emerge from the Hopf-Hopf point $\HH_1$ in the
$(\kappa_1,\kappa_2)$-plane shows that the two local curves $T_u$ and
$T_1$ are actually part of a single curve; it is shown in
\fref{fig:biftori}.  Along the two local branches one finds many
points of $p\!:\!q$ resonance where the Floquet multiplier is a
rational multiple of $2\pi$. They can be detected during the
continuation of the torus bifurcation curve, and \fref{fig:biftori}(a)
shows such resonances for $q \leq 13$. Emerging from each point of
$p\!:\!q$ resonance are two curves of fold or saddle-node of periodic
orbit bifurcations, which bound a resonance tongue or region where the
dynamics on the torus is $p\!:\!q$ locked.
In \fref{fig:biftori}(a) we find
that the pair of saddle-node of periodic
orbit bifurcation curves emerging from each $p\!:\!q$
resonance point on the upper branch $T_u$ can be continued
to a $p\!:\!(p+q)$ resonance point on the lower branch $T_1$.
The enlargement in panel (b) shows this for the specific example of the
$2\!:\!7$ resonance on $T_u$ and the $2\!:\!9$ resonance on $T_1$; the
further enlargement in \fref{fig:biftori}(c) shows the narrow tip of
the resonance tongue near the $2\!:\!7$ resonance point.

Such `connected resonance tongues' near a
Hopf-Hopf bifurcation point are a curious phenomenon that has not been
reported elsewhere to the best of our knowledge. Note that general theory (for ODEs
and DDEs with fixed delays) states that the existence of smooth
(normally hyperbolic) invariant tori --- with locked dynamics in resonance
tongues and quasi-periodic dynamics along curves in the
$(\kappa_1,\kappa_2)$-plane --- is guaranteed only
locally near the curves $T_u$ and $T_1$. Since, a $p\!:\!q$ torus
knot is topologically different from a $p\!:\!(p+q)$ torus knot,
the respective locked solutions near $T_u$ and $T_1$
cannot lie on one and the same smooth invariant torus. Nevertheless, a
locked solution on a torus is simply a periodic orbit, and it may
continue to exist even when
the underlying torus disappears. When no longer constrained
to lie on an invariant torus, a $p\!:\!q$ periodic orbit can be
transformed smoothly into a $p\!:\!(p+q)$ periodic orbit, which
explains why the saddle-node of periodic
orbit bifurcation curves may connect the respective points on $T_u$
and $T_1$. It is important to realise, however, that the regions that
the pair of curves bound cannot contain smooth invariant tori
throughout;
some examples of non-smooth tori will be presented in
\sref{sec:breakup}. The questions of how the smooth tori near $T_u$
and $T_1$ break up and how
the overall phenomenon is organised by the Hopf-Hopf bifurcation
certainly merit further study, ideally in the setting of a
four-dimensional ODE.

\begin{figure}[t!]
\begin{center}
\includegraphics{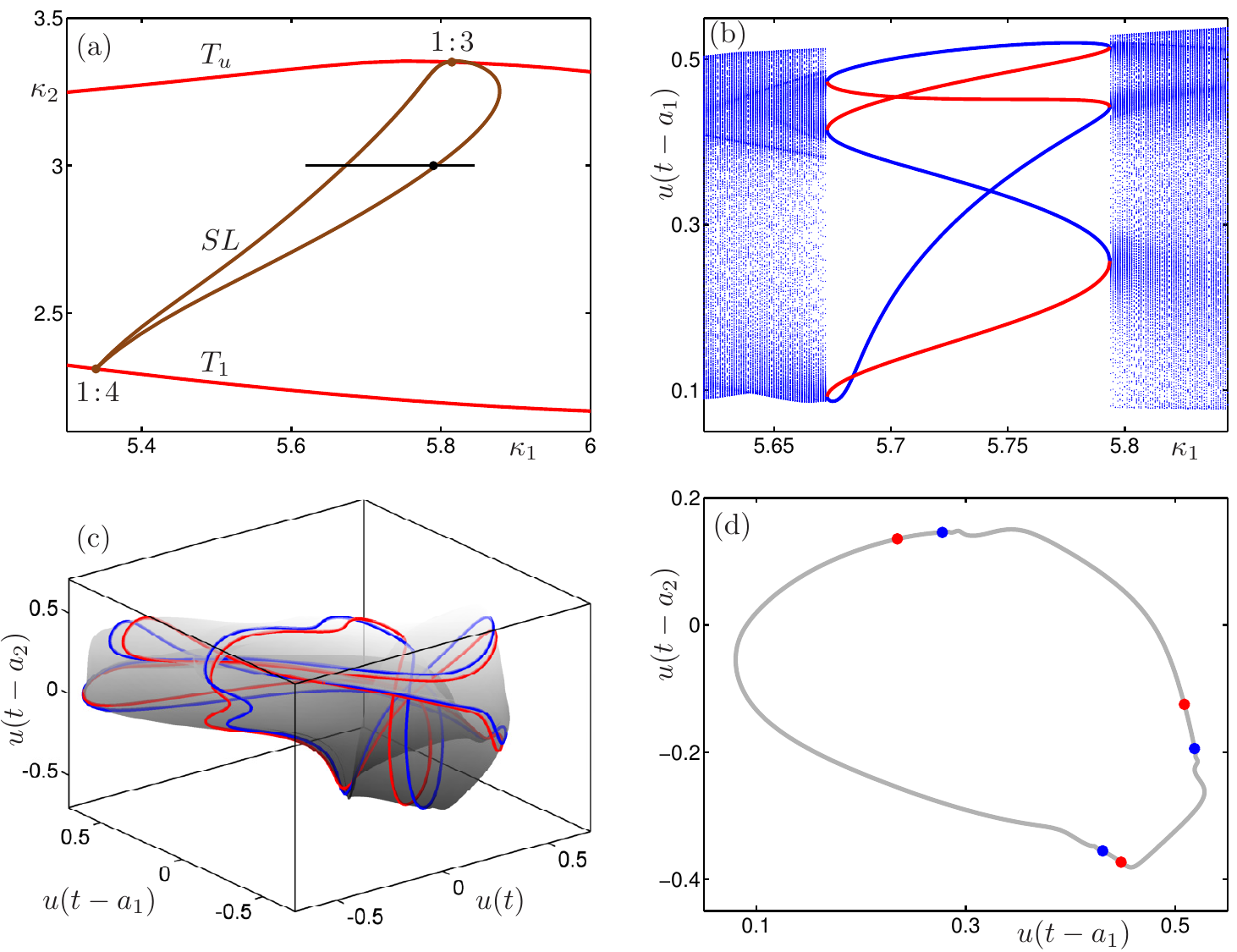}
\caption{The resonance tongue that connects a $1\! :\!3$ resonance on
  $T_u$ with a $1\!:\!4$ resonance on $T_1$. Panel (a) shows this
  resonance tongue in the $(\kappa_1,\kappa_2)$-plane. Panel (b) is a
  one-parameter bifurcation diagram in $\kappa_1$ for fixed
  $\kappa_2=3.0$, showing the values of $u(t-a_1)$ of the Poincar{\'e}
  trace of the stable periodic orbit (blue) and of the saddle periodic
  orbit (red) inside the resonance tongue, and of other solutions on
  tori outside the resonance tongue.  Panel (c) shows the $1\!:\!3$
  phase-locked torus (grey) for $\kappa_1=5.79$ with the stable and
  saddle periodic orbits in projection onto
  $(u(t),u(t-a_1),u(t-a_2))$-space, and panel (d) is its Poincar\'e
  trace in the $(u(t-a_1),u(t-a_2))$-plane.  The accompanying
  animation \texttt{chk\_anim9.avi} shows the corresponding evolution
  of the Poincar\'e trace over the $\kappa_1$-range in panel (b).}
\label{fig:1to3}
\end{center}
\end{figure}

Near the points of resonances on $T_u$ and $T_1$ the respective locked
dynamics must be expected to take place on a smooth invariant torus;
indeed \fref{fig:toruslocked} is an example of such a smooth torus
with locked dynamics. \fref{fig:1to3}(a) shows an enlargement of the
resonance tongue that connects a $1\! :\!3$ resonance on $T_u$ with a
$1\!:\!4$ resonance on $T_1$, and panel (b) shows the continuation of
the corresponding locked periodic orbits for $\kappa_2=3$. There are
three branches of stable and three branches of unstable periodic
solution in \fref{fig:1to3}(b), which meet at saddle-node bifurcations
marking the left and right boundaries of this region of locking. Tori
beyond the resonance region in panel (b) feature dynamics that is
quasi-periodic or of very high period; they were found by parameter
sweeping with numerical integration.  \fref{fig:1to3}(c) shows the
invariant torus for $\kappa_1=5.79$ (near the right boundary of the
locking region) as a surface in $(u(t),u(t-a_1),u(t-a_2))$-space, and
panel (d) is its trace for the Poincar\'e map defined by $u(t)=0$. The
torus was again found by computing the one-dimensional unstable
manifolds of the saddle periodic orbits. As \fref{fig:1to3}(c) and (d)
indicate clearly, this invariant torus is $1\! :\!3$ locked and
smooth. The animation \texttt{chk\_anim9.avi} in the supplemental materials
shows the evolution of the Poincar\'e trace as the parameter
$\kappa_1$ is swept over the range shown in \fref{fig:1to3}(b).

\begin{figure}[t!]
\begin{center}
\includegraphics{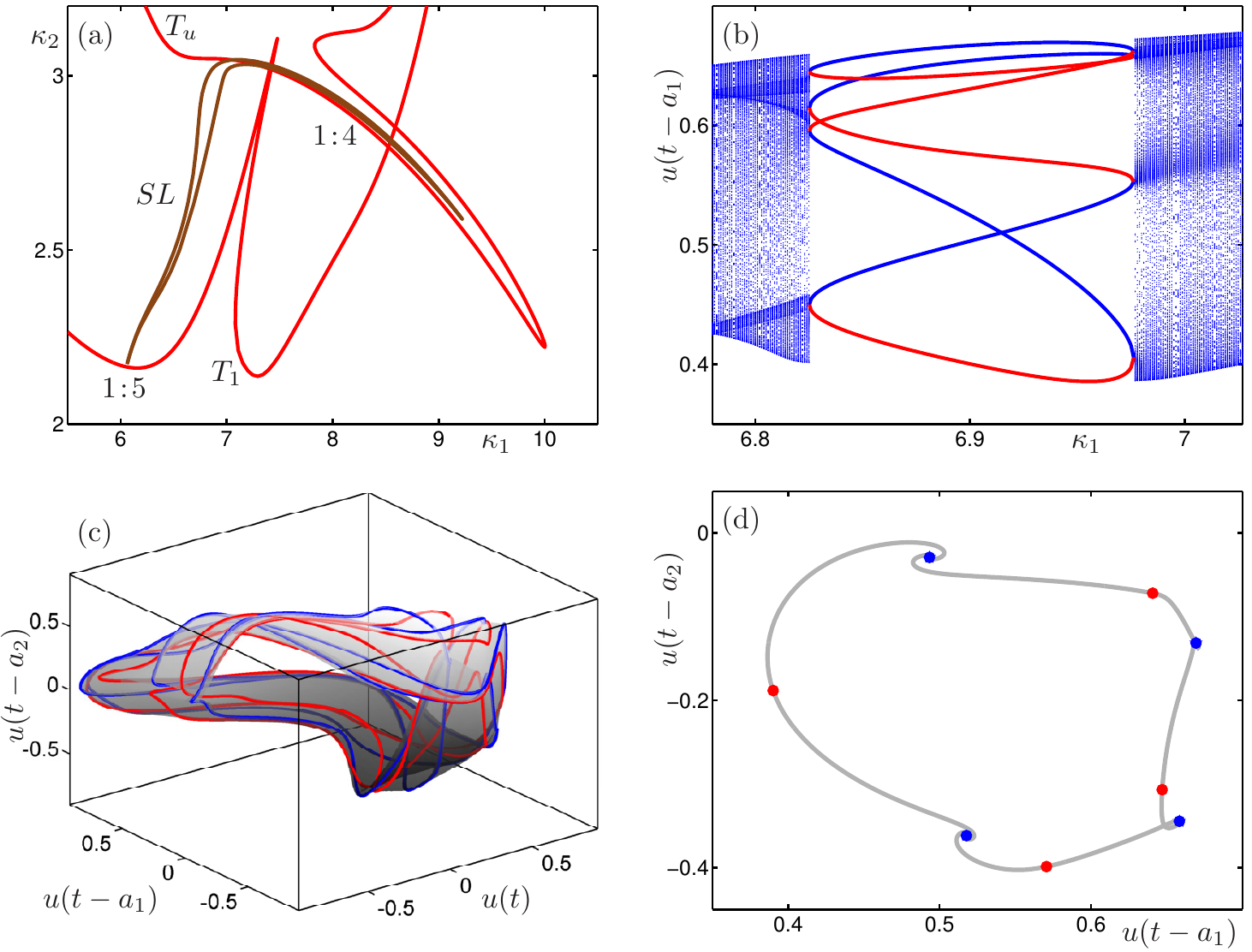}
\caption{The resonance tongue that connects a $1\! :\!4$ resonance on
  $T_u$ with a $1\!:\!5$ resonance on $T_1$. Panel (a) shows this
  resonance tongue in the $(\kappa_1,\kappa_2)$-plane. Panel (b) is a
  one-parameter bifurcation diagram in $\kappa_1$ for fixed
  $\kappa_2=3.0$, showing the values of $u(t-a_1)$ of the Poincar{\'e}
  trace of the stable periodic orbit (blue) and of the saddle periodic
  orbit (red) inside the resonance tongue, and of other solutions on
  tori outside the resonance tongue.  Panel (c) shows the $1\!:\!4$
  phase-locked torus-like object (grey) for $\kappa_1=6.93$ with the
  stable and saddle periodic orbits in projection onto
  $(u(t),u(t-a_1),u(t-a_2))$-space, and panel (d) is its Poincar\'e
  trace in the $(u(t-a_1),u(t-a_2))$-plane.  The accompanying
  animation \texttt{chk\_anim10.avi} shows the corresponding evolution
  of the Poincar\'e trace over the $\kappa_1$-range in panel (b).}
\label{fig:1to4}
\end{center}
\end{figure}

On the other hand, the saddle-node of periodic orbit bifurcation
curves in \fref{fig:biftori}(a) connect a $p\!:\!q$ resonance point on
$T_u$ to a $p\!:\!(p+q)$ resonance point on $T_1$. Hence, the torus
inside the respective resonance tongue cannot be smooth throughout,
because the knot type on a smooth invariant two-torus is an
invariant. While a $p\!:\!q$ periodic orbit can change smoothly into a
$p\!:\!(p+q)$ periodic orbit ---as \fref{fig:biftori} shows --- this
cannot happen on one and the same smooth two-torus.

\fref{fig:1to4}(a) shows an enlargement of the resonance tongue that
connects a $1\! :\!4$ resonance on $T_u$ with a $1\!:\!5$ resonance on
$T_1$. The one-parameter bifurcation diagram for $\kappa_2=3.0$ in
\fref{fig:1to4}(b) shows that one is dealing with $1\! :\!4$ locking:
there are four branches each of stable and unstable periodic orbits,
which meet in saddle-node bifurcations at the boundary of the
resonance tongue; the dynamics beyond the tongue is again
quasi-periodic or of very high period. The situation looks exactly as
that near the $1\!  :\!3$ resonance point in
\fref{fig:1to3}(b). However, as \fref{fig:1to4}(c) and (d) show, there
is no longer a smooth invariant torus. Rather, the one-dimensional
unstable manifold of the saddle periodic orbit spirals around the
stable periodic orbit; see panel (d). This means that the stable
periodic orbit has developed a pair of complex conjugate leading
Floquet multipliers, which is one mechanism for the loss of normal
hyperbolicity of an invariant torus that is known from ODE theory
\cite{mcgehee}. Note that the loss of normal hyperbolicity is found
numerically by two independent computations. The manifold seen to
spiral in panel (d) was computed by using the initial value problem
solver {\tt ddesd} and its event detection to compute a trajectory in
the unstable manifold of the periodic orbit and its intersections with
the Poincar\'e section, revealing the spiralling dynamics.  But we
also used DDE-BIFTOOL to directly compute the Floquet multipliers of
the unstable periodic orbit, confirming that the two dominant
multipliers are complex conjugate. The loss of normal hyperbolicity is
very clearly seen in the animation \texttt{chk\_anim10.avi} in the
supplemental materials, which shows the evolution of the Poincar\'e
trace in a one-parameter $\kappa_1$-sweep across the resonance
tongue. Namely, stable periodic points on the Poincar\'e trace are
denoted by stars in the animation when their dominant Floquet
multipliers are complex conjugate; this happens across much of this
traverse of the resonance tongue, and the unstable manifold of the
saddle periodic orbit is then seen to spiral into the stable periodic
points on the Poincar\'e trace.

\subsection{Break-up of $1\!:\!4$ locked torus}
\label{sec:breakup}

\begin{figure}[t!]
\begin{center}
\includegraphics{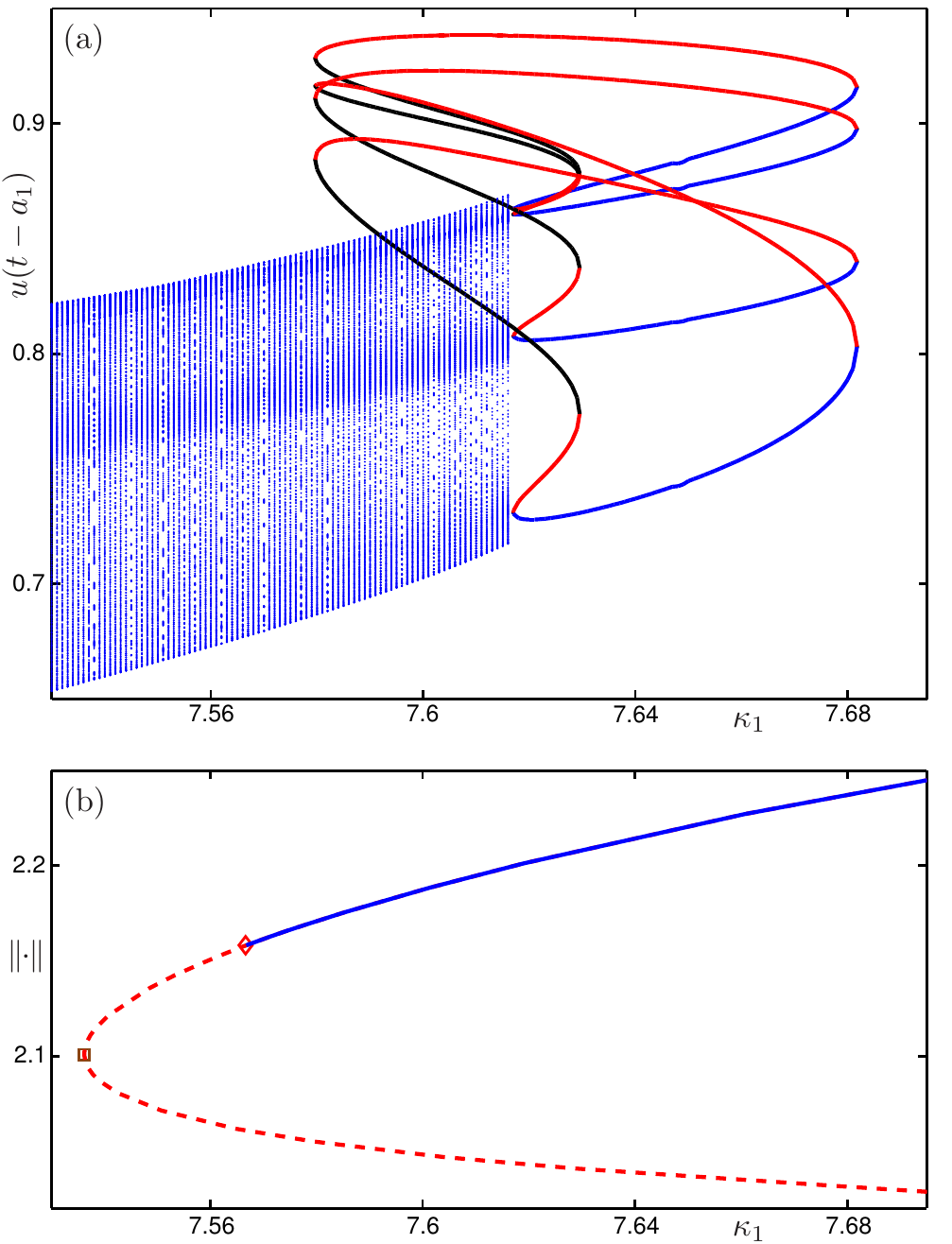}
\caption{One-parameter bifurcation diagrams relevant for the
transition throught the 1:4 resonance. Panel (a) shows the values of
$u(t-a_1)$ of the Poincar{\'e} trace of solutions on tori outside the
resonance tongue and of period-four periodic orbits that are stable
(blue), have one unstable Floquet multiplier (red), or have two
unstable Floquet multipliers (black).  Panel (b) shows the
simultaneously existing pair of principal periodic orbits that are
born in a saddle-node bifurcation, and one of which is stable (blue)
past the torus bifurcation (diamond).}
\label{fig:1to4breakup}
\end{center}
\end{figure}

\begin{figure}[t!]
\begin{center}
\includegraphics{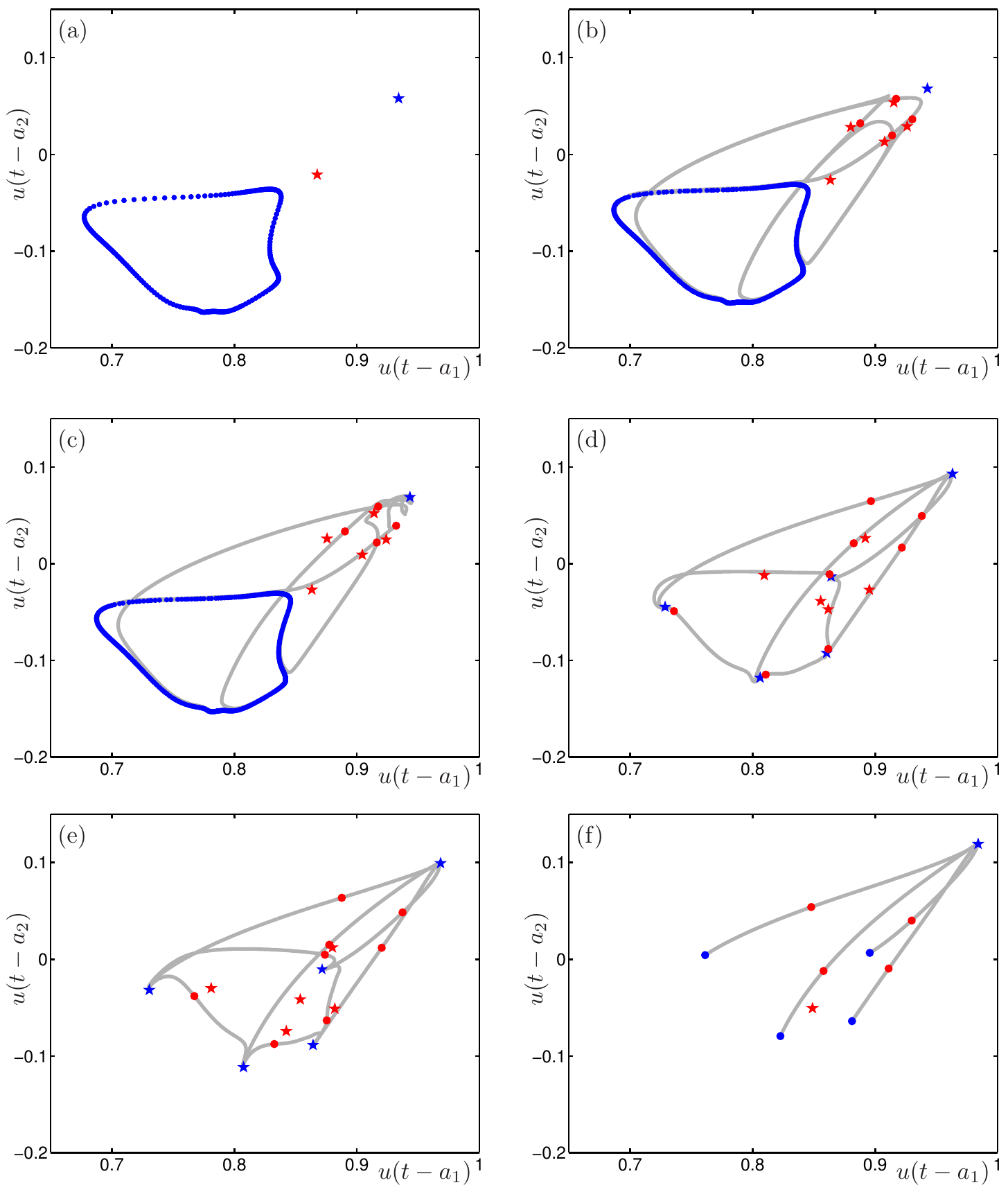}
\vspace{-1ex}
\caption{Sequence of Poincar\'e traces in the
$(u(t-a_1),u(t-a_2))$-plane showing the break-up of a torus with
$1\!:\!4$ phase locking. Shown are invariant curves (bold blue dots),
stable periodic points (blue stars) and saddle periodic points with
two unstable Floquet multipliers (red stars) and with a single
unstable Floquet multiplier (red dots); also shown are the traces of
the unstable manifolds (grey curves) of the latter saddle points. Here
$\kappa_2 = 3$ and in panels (a)--(f) $\kappa_1$ takes the values
$7.567$, $7.58$, $7.581$, $7.618$, $7.629$, and $7.666$,
respectively. See also the accompanying animation \texttt{chk\_anim12.avi}.
}
\label{fig:1to4breakupPS}
\end{center}
\end{figure}

In the previous section we discussed the local transition for fixed
$\kappa_2=3$ through a $1\!:\!4$ resonance as $\kappa_1$ changes near
$\kappa_1=6.93$. Notice in \fref{fig:1to4}(a) that the associated
resonance tongue in the in $(\kappa_1,\kappa_2)$-plane has the shape
of a horseshoe with maxima of the two bounding saddle-node curves at
$\kappa_1 \approx 7$. Both of the two maxima occur for
$\kappa_2>3$. Hence, for $\kappa_2=3$ there is a range of
$\kappa_1$-values outside this resonance tongue before it is entered
again at $\kappa_1\approx7.617$ when $\kappa_1$ is increased further
beyond the range shown in \fref{fig:1to4}(b). As we will show now, the
transition through this second part of the $1\!:\!4$ resonance tongue
results in the break-up and disappearance of the torus via a
complicated scenario of bifurcations that involves nearby periodic
orbits.

The sequence of bifurcations for fixed $\kappa_2=3$ and the associated
dynamics are illustrated by two companion
figures. \Fref{fig:1to4breakup} shows two one-parameter bifurcation
diagrams in $\kappa_1$, and \fref{fig:1to4breakupPS} shows the
associated sequence of Poincar{\'e} traces in the
$u(t-a_1),u(t-a_2))$-plane; see also the accompanying animation
\texttt{chk\_anim12.avi}, which animates the evolution of the
Poincar\'e traces for $\kappa_1\in[7.530,7.702]$.

Starting at $\kappa_1 = 7.5$, there is an invariant torus with
quasiperiodic or high-period solutions on it; see
\fref{fig:1to4breakup}(a).  As $\kappa_1$ is increased, the first
bifurcation of interest is the creation of two saddle periodic
orbits at a saddle-node bifurcation of periodic orbits at $\kappa_1
\approx 7.5363$. We refer to them as the principal periodic
orbits because their branch can actually be traced back to first Hopf
bifurcation $H_1$; see \fref{fig:1Dbif}. As is shown in
\fref{fig:1to4breakup}(b), at $\kappa_1 \approx 7.5664$ one of the
two saddle periodic orbits gains stability in a
torus bifurcation when the branch of periodic orbits crosses the torus
curve $T_u$. This torus bifurcation is close to $1\!:\!4$ resonance,
with numerically computed Floquet multipliers
$\rho\approx-0.019\pm1.000073i$ very close to $\pm i$.  There is then
an interval of $\kappa_1$-values for which the stable periodic orbit
on the principal branch and the stable quasi-periodic torus co-exist;
see \fref{fig:1to4breakup}(a). The associated invariant closed curve in the
$u(t-a_1),u(t-a_2))$-plane is shown in \fref{fig:1to4breakupPS}(a),
together with the two points that represent the stable and saddle principal
periodic orbits in the Poincar{\'e} trace.

At $\kappa_1\approx7.5796$ another saddle-node bifurcation of periodic
orbits creates a pair of period-four orbits, one of which has exactly
one and the other two unstable Floquet multipliers; see
\fref{fig:1to4breakup}(a).  In the Poincar{\'e} trace in
\fref{fig:1to4breakupPS}(b), for $\kappa_1=7.58$, these are represented
by two sets of period-four points. Also shown is the one-dimensional
trace of the unstable manifold of the saddle periodic orbit with one
unstable Floquet multiplier; note that both its branches (on either
side of the respective periodic point) converge to the attracting
invariant curve.  Almost immediately afterwards, for
$7.58<\kappa_1<7.581$, there is a bifurcation that changes the nature
of the unstable manifold of the saddle period-four orbit. As
\fref{fig:1to4breakupPS}(c) shows, one branch now goes to the
attracting principle periodic orbit (blue star), while the other
branch still goes to the attracting invariant curve. This means that,
on the level of the Poincar{\'e} trace, we are dealing with a global
bifurcation that is described in the approximating normal form of a
$1\!:\!4$ resonance as a saddle connection of square type
\cite{124nonlin,124expmath}.

At $\kappa_1 \approx 7.617$ the $1\!:\!4$ resonance tongue is
re-entered and we find two locked period-four orbits on the torus, one
of which is attracting and the other has a single unstable Floquet
multiplier. In the trace in \fref{fig:1to4breakupPS}(d) they are shown
as a further two sets of period-four points. Also shown is the trace
of the unstable manifold of the saddle four-periodic orbit on the
torus, both branches of which end up at neighboring period-four
attracting points to form a smooth invariant curve. Hence, the torus
is still normally hyperbolic (that is, smooth) as is expected near the
boundary of a resonance tongue. Notice that the respective branch of
the unstable manifold of each saddle period-four point off the invariant
curve now also goes to the attracting periodic orbit on the torus.

As $\kappa_1$ is increased further, the torus loses normal
hyperbolicity. More specifically, the branches of all unstable
manifolds shown in \fref{fig:1to4breakupPS}(e) approach the attracting
period-four orbit along the same side of its weak stable
eigen-direction, so that a cusp is formed along the attracting
period-four orbit. Moreover, the period-four orbit with two unstable
Floquet multipliers, created at $\kappa_1\approx7.5796$ and
not mentioned since, now enters the action.  As $\kappa_1$ increases,
this saddle periodic orbit approaches the saddle periodic orbit on the
torus, which has a single unstable Floquet multiplier. At $\kappa_1
\approx7.6295$, the two period-four orbits annihilate each other in a
further saddle-node bifurcation; see \fref{fig:1to4breakup}(a). The
periodic points and the associated unstable manifold disappears
at this value of $\kappa_1$. Hence, as \fref{fig:1to4breakupPS}(f)
illustrates, we are left with the two remaining period-four orbits: the
attracting one and other saddle periodic orbit. Notice that the
unstable manifold of the latter does not change in this process,
meaning that one branch of each period-four point in the trace still
ends up at the principal periodic orbit, and the other at the
respective attracting period-four point.  As $\kappa_1$ is increased
even further, the two period-four orbits approach each other and
finally disappear in the last saddle-node bifurcation at
$\kappa_1\approx7.6818$ in \fref{fig:1to4breakup}(a). Hence,
we are left with only the stable and saddle principal periodic orbits;
see \fref{fig:1to4breakup}(b).

Overall, the torus loses normal hyperbolicity and then breaks up and
disappears completely. In particular, unlike for the cases studied in
\sref{sec:resonance}, the torus does not re-emerge on the other side
of the $1\!:\!4$ resonance tongue.

\section{Overall bifurcation diagram and conclusions}
\label{sec:overall}

Our study of the scalar state-dependent DDE \eqref{eq:twostatedep}
concentrated on the dynamics associated with the presence of
codimension-two Hopf-Hopf bifurcation points. We presented a normal
form procedure for state-dependent DDEs that, by eliminating the state
dependence up to order three, allowed us to determine the type of
Hopf-Hopf bifurcation from the resulting DDE with nine constant
delays. In this way, we showed that a pair of torus bifurcation curves
emerges locally from each of the three Hopf-Hopf bifurcation points in
the region of interest of the $(\kappa_1,\kappa_2)$-plane of
\eqref{eq:twostatedep}. Our normal form computations have been
confirmed by finding and continuing these torus bifurcation curves
numerically with the package DDE-BIFTOOL. What is more, numerical
continuation allowed us to follow the torus bifurcation curves beyond
the local neighborhoods of the Hopf-Hopf bifurcation points, and to
identify the structure of resonance tongues emerging from them.
We computed locked periodic orbits on the tori and
determined the boundaries of resonance tongues by continuing their
saddle-node bifurcations. The tori and the dynamics on them was
investigated and visualised by suitable projections into
three-dimensional space, as well as by their two-dimensional
Poincar{\'e} traces. In particular, we computed the unstable manifolds
of saddle-periodic orbits with a single unstable Floquet multiplier,
which allowed us to study in considerable detail how invariant tori
break up and disappear, for example near a $1\!:\!4$ resonance.

\begin{figure}[t!]
\begin{center}
\includegraphics{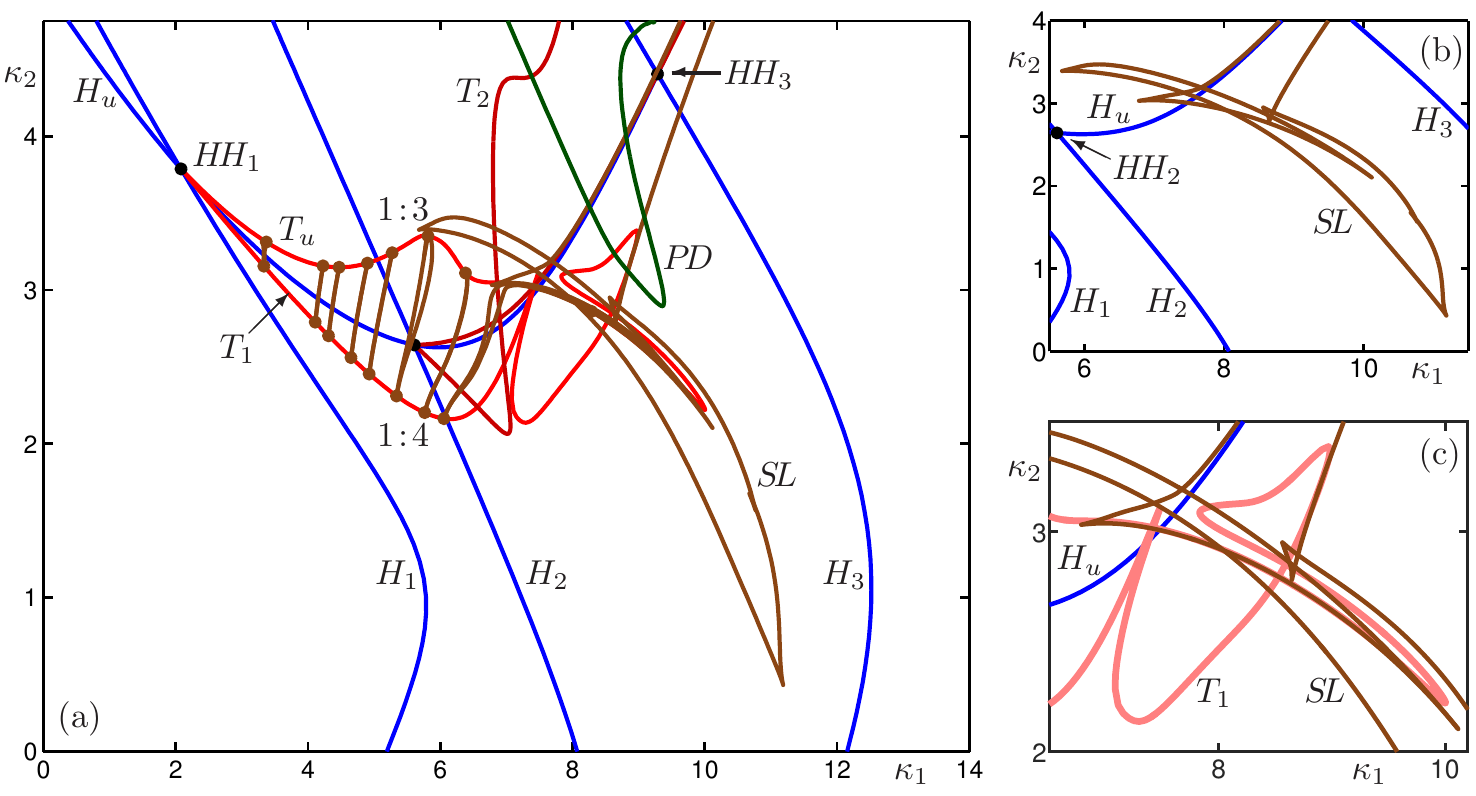}
\caption{Overall bifurcation diagram of \eqref{eq:twostatedep} in the
$(\kappa_1,\kappa_2)$-plane (a), showing curves of Hopf bifurcation
(blue) of torus bifurcation (red), of saddle-node of limit cycle
bifurcation (brown) and of period-doubling bifurcation (green). Panel
(b) is an enlargement near $\HH_2$, and panel (c) shows details of
the saddle-node of limit cycle bifurcation curve \SL that is not
connected to a resonance point on a torus.}
\label{fig:bifdiag}
\end{center}
\end{figure}

The starting point of our investigation was the one-parameter
bifurcation diagram \fref{fig:1Dbif} from \cite{Hum-Dem-Mag-Uph-12}.
Specifically, we used it to start continuations of periodic solutions
and of bifurcation curves in the $(\kappa_1,\kappa_2)$-plane, namely,
the curves of Hopf bifurcation in \fref{fig:hopfcurves}, as well as
the curves of torus bifurcation and saddle-node bifurcation that bound
certain resonance tongues in \fref{fig:biftori}(a). Returning to
\fref{fig:1Dbif}, one can identify two additional bifurcations that we
did not consider yet in our study of resonance phenomena: a
period-doubling bifurcation and an additional saddle-node bifurcation
of limit cycles.  \Fref{fig:bifdiag} shows the overall two-parameter
bifurcation diagram of \eq{eq:twostatedep} in the
$(\kappa_1,\kappa_2)$-plane with all the above bifurcation
curves. Panel (a) shows the relevant region where $0 \leq \kappa_1
\leq 14$ and $0 \leq \kappa_1 \leq 4.75$. In particular, shown are the
three pairs of torus bifurcation curves emerging from the Hopf-Hopf
bifurcation points $\HH_1$ to $\HH_3$. Notice that the two torus
bifurcation curves emerging from $\HH_3$ stay very close to the Hopf
bifurcation curve $H_u$; similarly, the torus bifurcation curve $T_u$
emerging from $\HH_2$ stays close to $H_u$, while the other curve
$T_2$ exits the top of the $(\kappa_1,\kappa_2)$-plane). Prominent in
panel (a) is the curve \PD of period-doubling bifurcation, which has a
minimum near $(\kappa_1,\kappa_2) \approx (10, 3)$. As
\fref{fig:1Dbif}(a) shows, the periodic orbit undergoing the
period-doubling bifurcation has a large amplitude.

The other new curve in \fref{fig:bifdiag} is the saddle-node of limit
cycle bifurcation curve labelled $\SL$. It enters and exits the top of
the $(\kappa_1,\kappa_2)$-plane near and in the direction of the Hopf
bifurcation curve $H_u$. As panel (b) shows, the curve $\SL$ is very
complicated and features eight cusps (two pairs of which are actually
very close to swallowtail bifurcations), resulting in quite a number
of regions with different numbers of bifurcating periodic orbits.
From \sref{sec:breakup} it is clear that some periodic orbits emerging
from saddle node bifurcations play an important role in the torus
break-up mechanism.  At the same time, the overall bifurcation diagram
in \fref{fig:bifdiag} shows with the example of $\SL$ that there are
other saddle node bifurcations that may not immediately be related to
the torus bifurcations emerging from $\HH_1$ to $\HH_3$. However,
$\SL$ comes very close to several torus bifurcation curves; see
\fref{fig:bifdiag}(c). Moreover, it follows closely the
horseshoe-shaped resonance region (discussed in \sref{sec:breakup})
that connects the $1\!:\!4$ resonance on $T_u$ with the $1\!:\!5$
resonance in $T_1$. We remark that the curve $\SL$ traverses the
$(\kappa_1,\kappa_2)$-plane several times close to the line
$\kappa_1+\kappa_2 = \gamma(a_2/a_1-2)\approx12.4$ where the singular
fold bifurcation $L_{00}$ occurs in the $\epsilon\to0$ singular limit
of \eq{eq:twostatedep}; see \cite{HBCHM:1}.  Moreover, \SL extends to
very low values of $\kappa_2$; in fact, in one-parameter bifurcation
diagrams in $\kappa_1$ for fixed $\kappa_2$, it generates the first
observed folds in the branch of periodic orbits that bifurcate from
the Hopf bifurcation $H_1$ as $\kappa_2$ is increased; see
\cite{Hum-Dem-Mag-Uph-12}.

\Fref{fig:bifdiag} can be seen as a summary and overview of the level
of complexity of the dynamics one can find in
\eqref{eq:twostatedep}. In a sense, the overall bifurcation diagram in
the $(\kappa_1,\kappa_2)$-plane of the two feedback strengths would not
be particularly unusual for a nonlinear DDE. Its surprising aspect is,
however, that all phenomena it represents are entirely due to the
state dependence. As the state-dependence parameters $c_1$ and $c_2$
of the delays are decreased to zero, the bifurcation structure in
\fref{fig:bifdiag}, including the Hopf-Hopf bifurcation points and
induced dynamics on tori, will disappear. Indeed,
\eqref{eq:twostatedep} for $c_1 = c_2 = 0$ is entirely linear and,
hence, does not have any nontrivial dynamics.  Hence, if one were to
replace the state-dependence by constant delays, none of the dynamics
we reported would be found.  Admittedly, equation
\eqref{eq:twostatedep} has been constructed as an extreme case in this
regard. Nevertheless, the study presented here should be seen as a
health warning: replacing state dependence by a constant-delay
approximation may result in the disappearance of the very dynamics one
intends to study. This may be the case even when the approximating
constant-delay DDE is actually nonlinear itself.

State-dependent DDEs have been suggested as suitable models in a
number of applications \cite{DGHP10,IST07,Jes-Cam-10,pyragas2,yb}.  We
hope that the study presented here may serve as a demonstration of
what can be achieved by a combination of analytical and numerical
tools when it comes to the bifurcation analysis of a given
state-dependent DDE. It is now possible to study models from this
class effectively in their own right, and to determine the role the
state dependence plays in the observed dynamics. In fact, normal form
calculations and numerical continuation tools are able to produce
consistent results, such as the type of codimension-two bifurcation or
the existence and organisation of resonances on tori, for which, as
yet, the respective theory has not yet been developed for
state-dependent DDEs.  We believe that case studies of specific
systems are also a useful way of guiding the further development of
theory for state-dependent DDEs.  At the same time, numerical methods
also continue to be developed further. For example, the curves shown
in \fref{fig:bifdiag} were computed with recently implemented routines
of DDE-BIFTOOL \cite{NewDDEBiftool} that allow the continuation in two
parameters of codimension-one bifurcation of periodic orbits to
determine curves of saddle-node, period-doubling and torus
bifurcations. Previously, such curves could only be constructed by
detecting the respective bifurcation in one-parameter continuations,
which is certainly not a suitable method for finding complicated
bifurcation curves such as $\SL$ in \fref{fig:bifdiag}(b). In a
nutshell, practically all advanced tools for the bifurcation analysis
of DDEs are now also available when state dependence is present.


\section*{Acknowledgements}
A.R.H.~is grateful to the National Science and Engineering Research Council (NSERC),
Canada for funding through the Discovery Grant program, and thanks the University
of Auckland for its hospitality and support during two research visits. R.C.C.~thanks
the Department of Mathematics and Statistics at McGill for their hospitality during
his time as a Postdoctoral Fellow and now as an Adjunct Member of the department.
He is also grateful to NSERC and the Centre de Recherches Math\'ematiques for
funding and to the FQRNT for a PBEEE award.
We thank Jan Sieber for fruitful discussions regarding normal form calculation
within DDE-BIFTOOL, Rafael de la Llave and Xiaolong He for helpful comments
on quasiperiodic solutions in state-dependent DDEs, and two anonymous referees for their
very constructive feedback on the initial version of the manuscript.





\newpage

\begin{appendices}

\section{Computation of the Hopf-Hopf Normal Form}
\label{app:NormalForm}

Here we describe in detail the derivation of the normal form of the
Hopf-Hopf bifurcation for the truncated constant-delay DDE
\eqref{eq:dde3} from \sref{sec:expansion}, where we follow the
derivation of Wu and Guo \cite{Wu-Guo-13}. The computational task is
to derive the restriction of the semi-flow of \eqref{eq:dde3} to the
four-dimensional center manifold up to third order, which is an ODE
from which the type of Hopf-Hopf bifurcation can be determined
\cite{Kuz-04-Book}. We elaborate these steps as follows.  In
\sref{secapp:centreman} we construct a projection to the center
manifold for the constant-delay DDE \eqref{eq:dde3}, and in
\sref{sec:dhopf} we study the flow on the center manifold near the
Hopf-Hopf bifurcation. We then compute the quadratic and cubic terms
of this flow in \sref{sec:code}, which enables us to determine the
normal form and type of Hopf-Hopf bifurcation in
\sref{secapp:NormalForm}.

\subsection{Center manifold}
\label{secapp:centreman}

To construct the center manifold for the constant delay DDE
\eq{eq:dde3} we write it as an RFDE in the form \eq{eq:rfde}, that is,
as a sum of linear and nonlinear operators as \be \label{eq:rfdelnl}
u'(t)=\cL u_t+\cF(u_t).  \ee It follows from \eq{eq:F} and
\eq{eq:lineareq} that
\begin{align} \label{eq:lop}
\cL u_t & =-\gamma u_t(0) -\kappa_1
u_t(-a_1)-\kappa_2 u_t(-a_2),\\ \cF(u_t)& =F(u_t)-\cL
u_t, \label{eq:nop}
\end{align}
while, from \sref{sec:expansion}, the nonlinear operator
is given by
\begin{align}
\notag \cF(u_t) & = \sum_{i=1}^2\kappa_i(c u_t(0))L
u_t(-a_i) + \sum_{i,j=1}^2\kappa_i\kappa_jc^2u_t(0)u_t(-a_i)L
u_t(-a_i-a_j) \\ & \qquad -\frac12(c u_t(0))^2\sum_{i=1}^2\kappa_i
L^2u_t(-a_i).  \label{fut}
\end{align}
Here, the difference operator $L$ defined in
\eq{eq:Llinearop}, has been applied to $u_t$ in the natural way, so
\begin{align}
\notag Lu_t(\theta)&=-\gamma u_t(\theta)- \kappa_1
u_t(\theta-a_1) - \kappa_2 u_t(\theta - a_2)\\ &=-\gamma u(t+\theta)-
\kappa_1 u(t+\theta-a_1) - \kappa_2 u(t+\theta- a_2). \label{LCop}
\end{align}

We start by introducing the appropriate spaces and operators that we
will need to perform the reduction to a four-dimensional center
manifold at a Hopf-Hopf bifurcation point. Throughout this section we
will follow the notation used in Wu and Guo \cite{Wu-Guo-13} and adapt
the corresponding theory to study \eqref{eq:rfdelnl} near Hopf-Hopf
bifurcations.

\sloppy{As noted in the introduction, it is standard to treat the RFDE
\eq{eq:rfde} as an infinite-dimensional dynamical system in the Banach}
space of continuous functions of an interval into $\R^d$. For the
scalar DDE \eq{eq:rfdelnl} we have $d=1$, and we equip $\R$ with the
Euclidian norm, $|\cdot|$, and, for given $\tau>0$, we define
\begin{equation}\label{banachspace-1-tau}
C = C([-\tau, 0], \R),
\end{equation}
the Banach space of continuous mappings, equipped with
the supremum norm. For $\varphi\in C$, this norm is given by
\[
\| \varphi \| = \sup_{\theta\in[-\tau,0]} |\varphi(\theta)|.
\]
In an analogous manner, we define
\begin{equation}
C^1 = C^1([-\tau, 0], \R),
\end{equation}
the space of continuous differentiable mappings with
continuous derivative, which is also a Banach space with the
corresponding supremum norm
\[
\| \varphi \| = \sup_{\theta\in[-\tau,0]}(|\varphi(\theta)| +
|\tfrac{d}{d\theta}\varphi(\theta)|), \qquad\varphi\in C^1.
\]
With $u_t\in C$ defined by \eq{eq:ut} equation \eq{eq:rfdelnl}
defines an RFDE of the form \eq{eq:rfde} provided $\cL:C\to\R$ and
$\cF:C\to\R$.  The linear operator $\cL$ is defined in \eqref{eq:lop},
and it is a continuous operator from $C$ into $\R$ whenever $\tau\geq
a_2$ (recalling that $a_2>a_1$).  However, some care needs to be taken
with the operator $\cF$.  As noted in \sref{sec:expansion}, the
truncation to third order results in constant delays, the largest of
which is $\tau=3a_2$. This shows up in \eq{fut} where $u(t-3a_2)$
appears in both the terms $L u_t(-2a_2)$ and $L^2u_t(-a_2)$.  Hence,
we require $\tau\geq 3a_2$ for $\cF:C\to\R$ and for \eq{eq:rfdelnl} to
define an RFDE. This contrasts with the state-dependent DDE
\eq{eq:twostatedep}, which in \eq{eq:F} we defined as an RFDE with
$\tau=a_2+\tfrac{a_1}{\gamma}(\kappa_1+\kappa_2)$.

In the following consider \eq{eq:rfdelnl} as an RFDE with $C$ defined
by \eq{banachspace-1-tau} and $\tau=3a_2$.  The linearized system
associated to \eqref{eq:rfdelnl} is \be \label{lin-sys} u'(t) = \cL
u_t.  \ee Since the linear operator $\cL:C \to \R$, defined in
\eqref{eq:lop}, is continuous, then, as shown in \cite{Hal-Lun-93} by
the Riesz representation theorem, there exists a function
$\eta:[-\tau,0]\to\R$ of bounded variation such that
\[
\cL \varphi = \int_{-\tau}^{0}d\eta(\theta)\varphi(\theta), \quad
\forall \varphi \in C.
\]
The function $\eta$ satisfies that $\eta(\theta)=0$ for $\eta\in
(-\tau, -a_2) \cup (-a_2, -a_1) \cup (-a_1,0)$, $\eta(0) = - \gamma$,
$\eta(-a_1) = - \kappa_1$, $\eta(-a_2) = - \kappa_2$. Therefore,
\begin{equation} \label{lin-operator}
\cL \varphi = \int_{-3
a_2}^{0}d\eta(\theta)\varphi(\theta) = -\gamma \varphi(0) - \kappa_1
\varphi(-a_1) -\kappa_2 \varphi(-a_2).
\end{equation}
Let $T(t):C \to C$ be the solution operator of the
linear system \eqref{lin-sys}.  Then, as is shown in
\cite{Hal-Lun-93}, the infinitesimal generator $\cA$ of the semi-group
$T(t)$ is defined by
$$\cA \varphi = \lim_{t\to 0^+} \frac{T(t)\varphi - \varphi}{t}$$
for $\varphi \in C$, which results in
\begin{equation} \label{eq:dTAT}
\frac{d}{dt}T(t)\varphi = \cA T(t)\varphi
\end{equation}
and
\begin{equation} \label{inf-gen}
(\cA\varphi)(\theta) = \left\{
\begin{array}{ll} \frac{d}{d\theta}\varphi, & \textrm{if
}\theta\in[-\tau,0),\\
-\gamma\varphi(0)-\kappa_1\varphi(-a_1)-\kappa_2\varphi(-a_2), &
\textrm{if }\theta=0.
\end{array}\right.
\end{equation}
Here the domain of $\cA$ is given by
\[
\textrm{dom}(\cA) = \{\varphi:\varphi\in C^1,\, \varphi'(0) = \cL
\varphi\}.
\]

Following \cite{Wu-Guo-13}, we now enlarge the phase space $C$ so that
\eqref{eq:rfdelnl} can be written as an abstract ODE in a Banach
space. Let $BC$ be the set of functions from $[-\tau, 0]$ to $\R$ that
are uniformly continuous on $[-\tau, 0)$ and may have a jump
discontinuity at 0. We also introduce the function $X_0:[-\tau,0]\to
\R$ defined by
\[
X_0(\theta) =
\left\{
\begin{array}{ll}
1, & \theta=0,\\
0, & \theta\in [-\tau,0).
\end{array}\right.
\]
Then every $\varphi\in BC$ can be expressed as $\varphi = \phi +
X_0\xi$ with $\phi \in C$ and $\xi \in \R$, and thus $BC$ can be
identified with $C\times\R$. We equip $BC$ with the norm $|\phi + X_0
\xi| = \|\phi\| + |\xi|$, which is then also a Banach space.

The spectrum of the infinitesimal generator $\cA$ consists of the
eigenvalues $\lambda \in \sigma(\cA)$ that satisfy the characteristic
equation
\begin{equation}\label{characteristic-equation}
0 = \Delta(\lambda) =
\lambda - \int_{-\tau}^0 e^{\lambda \theta} d \eta(\theta) =\lambda +
\gamma + \kappa_1 e^{-a_1 \lambda} + \kappa_2 e^{-a_2 \lambda}.
\end{equation}
For any $\lambda \in \sigma(\cA)$, the generalized
eigenspace $\mathcal{M}_\lambda(\cA)$ is finite-dimensional and, since
in our case the eigenvalues will have multiplicity $1$, we write
$\cM_{\lambda_i}(\cA) = \ker(\lambda_i I - \cA)$ and we have the
decomposition
\[
C=\ker(\lambda I - \cA)\oplus\textrm{im}(\lambda I - \cA).
\]
If we have a set of distinct eigenvalues $\Lambda =
\{\lambda_1,...,\lambda_d\} \subset \sigma(\cA)$, we will use the
notation $\cM_\Lambda(\cA)$ for the generalized eigenspace
corresponding to those eigenvalues.  Let $d=\dim \cM_\Lambda(\cA)$,
and $\varphi_1,\ldots,\varphi_d$ be a basis for $\cM_\Lambda(\cA)$,
and $\Phi_\Lambda = (\varphi_1,\ldots,\varphi_d)$. Then there exists a
$d\times d$ constant matrix $B = B_\Lambda$ such that $\cA
\Phi_\Lambda = \Phi_\Lambda B$, and
\begin{itemize}
\item[i)] the only eigenvalues of $B$ are $\Lambda =
\{\lambda_1,\ldots,\lambda_d\}$,
\item[ii)] $\Phi_\Lambda (\theta) = \Phi(0) e^{B \theta}$,
\item[iii)] $T(t) \Phi_\Lambda = \Phi_\lambda e^{B t}$, where $T(t)$
satisfies \eq{eq:dTAT}.
\end{itemize}

We denote by $C^*$ the dual of $C$, so $C^* = C([0, \tau], \R^*) =
C([0, \tau], \R),$ the space of continuous functions from $[0, \tau]$
to $\R$ with norm given for a function $y \in C^{*}$ by,
\[
\|y\| = \sup_{t \in [0, \tau]}|y(t)|.
\]
We also introduce a bilinear form associated with $\cL$, for
$\varphi\in C$ and $\psi \in C^*$, as
\begin{equation}\label{bilinear}
\langle \psi, \varphi \rangle =
\overline{\psi}(0)\varphi(0) -
\kappa_1\int_{-a_1}^{0}\overline{\psi}(s+a_1)\varphi(s)ds -
\kappa_2\int_{-a_2}^{0}\overline{\psi}(s+a_2)\varphi(s)ds.
\end{equation}
Then we can find, at least formally, an adjoint
linearized problem,
\[
y'(t) = \gamma y(t) + \kappa_1 y(t+a_1) + \kappa_2 y(t+a_2)
\]
acting on functions $y_t \in C^{*}$, with the corresponding
solution operator $T^*:C^*\to C^*$.  We denote the infinitesimal
generator of the strongly continuous semi-group $T^*$ by $\cA^*$.  For
$\psi \in C^*$, $\cA^*\psi$ is defined by
\begin{equation} \label{adj-inf-gen}
(\cA^*\psi)(\xi) = \left\{
\begin{array}{ll} -\frac{d}{d\xi}\psi(\xi), & \textrm{if } \xi \in (0
,\tau],\\ \gamma \psi(0) + \kappa_1 \psi(-a_1) +\kappa_2\psi(-a_2),
&\textrm{if }\xi=0.
\end{array}\right.
\end{equation}
The operators $\cA$ and $\cA^*$ as defined by
\eqref{inf-gen} and \eqref{adj-inf-gen} are then adjoint with respect
to the bilinear form \eqref{bilinear}; that is
\[
\langle \psi, \cA \varphi\rangle = \langle \cA^* \psi, \varphi
\rangle, \qquad\varphi\in C, \; \psi\in C^*.
\]

\subsection{Hopf-Hopf bifurcation}
\label{sec:dhopf}

We will now use the properties of $\cA$ and $\cA^*$ to construct a
basis of eigenfunctions for the center space and the adjoint of the
center space at the Hopf-Hopf bifurcation. At a Hopf-Hopf bifurcation,
the infinitesimal generator $\cA$ defined by \eqref{inf-gen} has two
pairs of simple purely imaginary eigenvalues $\pm i \omega_1$ and $\pm
i \omega_2$ that do not have a strong resonance; that is, $k \omega_1
\neq \ell \omega_2$ where $k$ and $\ell$ are positive integers with
$k+\ell \leq 5$.

Following the discussion in \sref{secapp:centreman}, we know that the
generalized center eigenspace $E^c = \mathcal M_{\{\pm i\omega_1, \pm
i \omega_2\}}$ is a four-dimensional linear space. We also have two
complex conjugate eigenvectors $q_1, q_2 \in C$ such that
\[
\cA q_j = i \omega_j q_j, \quad \textrm{for } j=1,2,
\]
namely, $q_j(\theta) = e^{i \omega_j \theta}$, since clearly
$\frac{dq_j}{d\theta} = i \omega_j q_j$, while at the Hopf-Hopf point
we have
\[
\cA q_j(0)=-\gamma q_j(0) - \kappa_1 q_j(-a_1) -\kappa_2 q_j(-a_2) =
-\gamma  - \kappa_1 e^{-i \omega_j a_1} -\kappa_2 e^{-i \omega_j a_2} =
 i \omega_j q_j(0),
\]
as required to satisfy \eq{inf-gen}. We also introduce the adjoint
eigenvectors $p_1, p_2\in C^*$, such that
\[
\cA^* p_j = - i \omega_j p_j, \quad \textrm{for } j=1,2.
\]
Then, \[p_j(s) = D_j e^{i\omega_j s}.\] We choose the constants $D_j =
1/\left( \overline{1 - \kappa_1 e^{-i a_1 \omega_j} - \kappa_2 e^{-i
a_2 \omega_j}} \right )$ so that that these eigenvectors are
normalized with respect to the bilinear form \eqref{bilinear}, that
is,
\[
\langle p_j, q_k \rangle = \delta_{j,k}, \quad\textrm{and}
\quad \langle p_j, \overline q_k \rangle = 0.
\]
Therefore, if we let $\Phi = (q_1,\overline q_1, q_2, \overline q_2)$
and $\Psi = (p_1, \overline p_1, p_2, \overline p_2)^T$.  Then
$\langle \Psi, \Phi \rangle = Id_4$. Hence, $\Phi$ is a basis for $P =
E^c$ and $\Psi$ is a basis for $E^{c*} = P^*$ in $C^*$ and we have
that $\frac{d}{d\theta}{\Phi} = \Phi B$, where
\[
B = \left( \begin{array}{cccc}
 i \omega_1 & 0 & 0 & 0\\
 0 &-i \omega_1 & 0 & 0\\
 0 & 0 & i \omega_2 & 0\\
 0 & 0 & 0 &-i \omega_2
\end{array}\right).
\]
It follows that $BC = P\oplus\ker \Pi$ with $E^s\oplus E^u \subset
\ker \Pi$, where for $\varphi = \phi + X_0 \xi \in BC$ the projection
$\Pi: BC \to P$ is defined by
\[
\Pi(\varphi) = \Pi(\phi + X_0 \xi) = \Phi \langle \Psi, \phi + X_0 \xi \rangle =
\Phi[ \langle \Psi, \phi \rangle + \overline \Psi(0) \xi]
\]
for $\phi \in C$ and $\xi \in \R$.

So the abstract ODE in $BC$ associated with \eqref{eq:rfdelnl} can be
rewritten in the form
\begin{equation}\label{ODE-in-BC1}
\frac{d}{dt}u_t = \cA u_t + X_0 \cF(u_t).
\end{equation}
For the solution $u_t$ of \eqref{ODE-in-BC1} we define
$z_j(t) = \langle p_j, u_t \rangle$ with $j = 1,2$ and
\begin{align} \notag
w(z) & = u_t -  \Bigl(z_1(t) e^{i \omega_1 \theta} + z_2(t) e^{i \omega_2 \theta} +
\overline z_1(t) e^{-i \omega_1 \theta} + \overline z_2(t) e^{-i \omega_2 \theta}\Bigr)\\
& = u_t - 2 \Re\Bigl( z_1(t) e^{i \omega_1 \theta} + z_2(t) e^{i \omega_2 \theta} \Bigr),
\label{ctr-graph}
\end{align}
where $z = (z_1, z_2) \in \C^2$. In fact, $z_j$ and $\overline z_j$
are local coordinates for the center manifold $\cM_{\rm loc}$ in the
directions of $D_j e^{i \omega_j s}$ and $\overline D_j e^{-i \omega_j
s}$, $j = 1,2$.  We notice that
\begin{align*}
\langle p_j, w(z) \rangle
& =\langle D_j e^{i \omega_j s}, w(z) \rangle
=\langle D_j e^{i \omega_j s}, u_t - 2 \Re( z_1(t) e^{i \omega_1
  \theta} + z_2(t) e^{i \omega_2 \theta} )\rangle \\
& = z_j(t) - \langle D_j e^{i \omega_j s}, 2 \Re( z_1(t) e^{i
  \omega_1 \theta} + z_2(t) e^{i \omega_2 \theta} )\rangle
= z_j(t) - z_j(t) = 0.
\end{align*}
Then for the solutions $u_t$ of \eqref{ODE-in-BC1} that belong to
$\cM_{\rm loc}$, we have that
\begin{align*}
\dot z_j(t) &= \langle D_j e^{i \omega_j s}, \dot u_t \rangle = \langle D_j e^{i \omega_j s},
\cA u_t + X_0 \cF(u_t) \rangle \\
&= \langle \cA^* D_j e^{i \omega_j s}, u_t\rangle + \langle D_j e^{i \omega_j s},
X_0 \cF(u_t) \rangle\\
&= i \omega_j z_j(t) + \langle D_j e^{i \omega_j s},
X_0 \cF(u_t) \rangle = i \omega_j z_j(t) + \overline D_j \cF(u_t)\\
&=  i \omega_j z_j(t)
+ \overline D_j \cF\left( w(z) + 2 \Re( z_1(t) e^{i \omega_1 \theta}
+ z_2(t) e^{i \omega_2 \theta} ) \right).
\end{align*}
Therefore, the flow on the center manifold satisfies
\begin{equation}\label{ctr-flow}
\dot z_j(t) = i \omega_j z_j(t) + g^j(z(t)),
\end{equation}
where
\begin{equation}\label{gjz}
g^j(z(t)) =  \overline D_j \cF\left( w(z) + 2 \Re( z_1(t) e^{i
    \omega_1 \theta}+ z_2(t) e^{i \omega_2 \theta} )\right)
\end{equation}
and $w(z)$ satisfies the ODE projected into the complement of $P$,
that is,
\begin{equation}\label{ctr-manifold}
\frac{d}{dt}w = \cA w + ( X_0 - \Phi \overline \Psi(0)) \cF\left(w(z) +
2 \Re( z_1(t) e^{i \omega_1 \theta} + z_2(t) e^{i \omega_2 \theta}) \right).
\end{equation}

\subsection{Complex ODE}
\label{sec:code}

To find the normal form we need explicit expressions of the flow
\eqref{ctr-flow} on the center manifold. We let
\begin{equation}\label{nonlinearity-exp}
g^j(z) = \sum_{\ell +s +r +k \geq 2}
\frac{1}{\ell! s! r! k!} g^j_{\ell s r k} z_1^\ell \overline z_1^r z_2^r \overline z_2^k
\end{equation}
and
\begin{equation}\label{manifold-exp}
w(z) = \sum_{\ell +s +r +k \geq 2}
\frac{1}{\ell! s! r! k!} w_{\ell s r k}
z_1^\ell \overline z_1^r z_2^r \overline z_2^k.
\end{equation}
Then we can compute the terms of order two and three of the flow
\eqref{ctr-flow} on the center manifold that we will require for the
normal form computation.

The nonlinearity $\cF:C\to\R$ defined by \eqref{fut} for the RFDE
\eqref{eq:rfdelnl} contains only quadratic and cubic terms and, hence,
for $\varphi \in C$ (with $\tau = 3 a_2$) we can expand the
nonlinearity as
 \begin{align} \notag
\cF(\varphi) & = \sum_{i=1}^2\kappa_i(c \varphi(0))L \varphi(-a_i)
+ \sum_{i,j=1}^2\kappa_i\kappa_jc^2 \varphi(0) \varphi(-a_i)L \varphi(-a_i-a_j) \\
& \qquad -\frac12(c \varphi(0))^2\sum_{i=1}^2\kappa_i L^2\varphi(-a_i)\\
& = \frac12 \cF^2(\varphi, \varphi) + \frac16\cF^3(\varphi, \varphi, \varphi),
\end{align}
where $\cF^j$ are the $j$-th order terms given by
\[
\cF^j(\nu_1,...,\nu_j) = \frac{\partial^j}{\partial t_1 \partial t_2 ... \partial t_j}
\cF\left.\left(\sum^j_{s=1}t_s \nu_s\right)\right\vert_{t_1=t_2=...=t_j=0}
\]
and $L$ is the difference operator defined by \eqref{LCop}.  We obtain
that, for $\nu_1, \nu_2, \nu_3 \in C$,
\begin{equation}  \label{eq:F2}
\cF^2(\nu_1, \nu_2) = \sum_{i=1}^2 \kappa_i c [ \nu_1(0) \nu'_2(-a_i)
+  \nu_2(0)) \nu'_1(-a_i)]
\end{equation}
and
\begin{equation} \label{eq:F3}
\cF^3(\nu_1, \nu_2, \nu_3) = -\sum_{\sigma \in S_3}
 c^2 (\nu_{\sigma(1)}(0) \nu_{\sigma(2)}(0))
[\kappa_1 \nu''_{\sigma(3)}(-a_1) + \kappa_2 \nu''_{\sigma(3)}(-a_2)],
\end{equation}
where the first sum is taken over the group $S_3$ of permutations of
three elements. Evaluating these expressions in the elements of the
basis $\Phi$, using \eqref{gjz}, we obtain the terms of the expansion.

The quadratic terms of the flow \eqref{ctr-flow} for the our specific
equation \eqref{eq:rfdelnl} are given by
\begin{align*}
g^j_{2000} &= \overline p_j(0) \cF^2(q_1(\theta), q_1(\theta))
= \overline D_j 2 c \sum_{i =1}^2 \kappa_i e^{-i \omega_1 a_1}  \left(-\gamma
-\kappa_1 e^{-i \omega_1 a_1} -\kappa_2 e^{-i \omega_1 a_2} \right)\\
&= \overline D_j 2 c i \omega_1 \sum_{i =1}^2 \kappa_i e^{-i \omega_1 a_1}
= \overline D_j 2 c i \omega_1 (-\gamma - i \omega_1)
\end{align*}
and similarly
\[
g^j_{0020}=\overline p_j(0) \cF^2(q_1(\theta), q_1(\theta))
=\overline D_j 2 c i \omega_2 \sum_{i =1}^2 \kappa_i e^{-i \omega_2 a_1}
= \overline D_j 2 c i \omega_2 (-\gamma - i \omega_2),
\]
where we have used that $i \omega_j = -\gamma - \kappa_1 e^{-i a_1
  \omega_j} -\kappa_2 e^{-i a_2 \omega_j}$.
The remaining quadratic terms are obtained similarly as
\begin{align*}
g^j_{1100} &= 2 \overline D_j  c  \omega_1^2, \quad
g^j_{0011} = 2 \overline D_j  c  \omega_2^2, \quad \ \
g^j_{1010} = \overline D_j  c (\omega_1^2 + \omega_2^2 -i
\gamma(\omega_1 +\omega_2)), \\
g^j_{0101} &= \overline D_j  c (\omega_1^2 + \omega_2^2 + i
\gamma(\omega_1 +\omega_2)), \quad
g^j_{1001} = \overline D_j  c (\omega_1^2 + \omega_2^2 - i
\gamma(\omega_1 -\omega_2)),\\
g^j_{0110} &= \overline D_j  c (\omega_1^2 + \omega_2^2 + i
\gamma(\omega_1 - \omega_2)),\quad
g^j_{0200} = -\overline D_j 2 c i \omega_1 (-\gamma -i \omega_1),\\
g^j_{0002} &= -\overline D_j 2 c i \omega_2 (-\gamma +i \omega_2).
\end{align*}

Finally, we need to determine a few terms of the expansion of the
graph of the center manifold from \eqref{ctr-manifold}, namely the
terms $w_{1100}, w_{2000}, w_{1010}, w_{1001}, w_{0002}$, and
$w_{0011}$. We will determine these by substituting the expansion
\eqref{manifold-exp} into \eqref{ctr-manifold}. From the definition of
$\cA$ in \eq{inf-gen} this results in a differential equation and a
boundary condition that each coefficient of \eqref{manifold-exp} must
satisfy.

For the coefficient $w_{2000}$ we obtain the differential equation
\begin{equation}\label{w_2000-diff}
\frac{d}{d\theta}w_{2000}(\theta)
= 2 i \omega_1 w_{2000}(\theta)
+g^1_{2000} e^{i \omega_1 \theta}
+\overline g^1_{0200} e^{-i \omega_1 \theta}
+g^2_{2000} e^{i \omega_2 \theta}
+\overline g^2_{2000} e^{-i \omega_2 \theta},
\end{equation}
together with the boundary condition
\begin{equation}\label{w_2000-bndry}
\cL w_{2000} = 2 i \omega_1 w_{2000}(0)
+ g^1_{2000} + \overline g^1_{0200}
+ g^2_{2000} + \overline g^2_{2000} - \cF^2(q_1, q_1).
\end{equation}
The ODE \eq{w_2000-diff} can be solved by using an integrating factor
to obtain
\be
\label{w_2000-gensol}
w_{2000}(\theta) = -\frac{g^1_{2000}  e^{i \omega_1 \theta}}{i \omega_1}
-\frac{\overline{g}^1_{0200}  e^{-i \omega_1 \theta}}{3 i \omega_1}
+\frac{g^2_{2000} e^{i \omega_2 \theta}}{i (\omega_2-2 \omega_1)}
-\frac{\overline{g}^2_{0200}  e^{-i \omega_2 \theta}}{i (\omega_2+2 \omega_1)}
    +E_{2000} e^{2 i \omega_1 \theta}.
\ee
To determine the constant of integration $E_{2000}$, notice that
\eq{w_2000-gensol} implies
\[
\cL w_{2000} =
-\frac{g^1_{2000}(i\omega_1) 
}{i \omega_1}
-\frac{\overline{g}^1_{0200}(-i\omega_1)
}{3 i \omega_1}
 +\frac{g^2_{2000} (i\omega_2) 
}{i (\omega_2-2 \omega_1)}
-\frac{\overline{g}^2_{0200}(-i\omega_2)
}{i (\omega_2+2 \omega_1)}
+E_{2000} (-\Delta(2 \omega_1) + 2 i\omega_1)
\]
and
\[
2 i \omega_1 w_{2000} (0) =
-\frac{g^1_{2000}(2 i\omega_1) 
}{i \omega_1}
-\frac{\overline{g}^1_{0200}(2 i\omega_1)
}{3 i \omega_1}
 +\frac{g^2_{2000} (2 i\omega_1) 
}{i (\omega_2-2 \omega_1)}
-\frac{\overline{g}^2_{0200}(2 i\omega_1)
}{i (\omega_2+2 \omega_1)}
+E_{2000} (2i\omega_1).
\]
Substituting these expressions into \eqref{w_2000-bndry} we obtain
\[
E_{2000} = \frac{\cF^2(q_1,q_1)}{\Delta( 2 i \omega_1)}.
\]

We determine $w_{1100}$, $w_{1010}$, $w_{1001}$, $w_{0020}$, and
$w_{0011}$ similarly. The equations that they satisfy are given by
\begin{gather*}
\left.
\begin{array}{rcl}
\displaystyle
\frac{d}{d\theta} w_{1100}(\theta)
& = & g^1_{1100} e^{i \omega_1 \theta}
+\overline g^1_{1100} e^{-i \omega_1 \theta}
+g^2_{1100} e^{i \omega_2 \theta}
+\overline g^2_{100} e^{-i \omega_2 \theta},\\
\cL w_{1100} & = & g^1_{1100} + \overline g^1_{0011}
+ g^2_{1100} + \overline g^2_{0011} - \cF^2(q_2, \overline q_2),
\end{array}\right\} \\
\left.
\begin{array}{rcl}
\displaystyle
\frac{d}{d\theta}w_{1010}(\theta)
& = & i( \omega_1 + \omega_2)  w_{1010}(\theta)
+g^1_{1010} e^{i \omega_1 \theta}
+\overline g^1_{0101} e^{-i \omega_1 \theta}
+g^2_{1010} e^{i \omega_2 \theta}
+\overline g^2_{0101} e^{-i \omega_2 \theta}, \\
\cL w_{1010} & = & i( \omega_1 + \omega_2) w_{1010}(0)
+ g^1_{1010} + \overline g^1_{0101}
+ g^2_{1010} + \overline g^2_{0101} - \cF^2(q_1, q_2),
\end{array}\right\} \\
\left.
\begin{array}{rcl}
\displaystyle
\frac{d}{d\theta}w_{1001}(\theta)
& = & i( \omega_1 - \omega_2)  w_{1001}(\theta)
+g^1_{1001} e^{i \omega_1 \theta}
+\overline g^1_{0110} e^{-i \omega_1 \theta}
+g^2_{1001} e^{i \omega_2 \theta}
+\overline g^2_{0110} e^{-i \omega_2 \theta},\\
\cL w_{1001} & = &  i( \omega_1 - \omega_2) w_{1010}(0)
+ g^1_{1001} + \overline g^1_{0110}
+ g^2_{1001} + \overline g^2_{0110} - \cF^2(\overline q_1, q_2),
\end{array}\right\} \\
\left.
\begin{array}{rcl}
\displaystyle
\frac{d}{d\theta}w_{0020}(\theta)
& = & 2 i \omega_2  w_{0020}(\theta)
+g^1_{0020} e^{i \omega_1 \theta}
+\overline g^1_{0002} e^{-i \omega_1 \theta}
+g^2_{0020} e^{i \omega_2 \theta}
+\overline g^2_{0002} e^{-i \omega_2 \theta},\\
\cL w_{0020} & = &  2 i \omega_2 w_{0020}(0)
+ g^1_{0020} + \overline g^1_{0002}
+ g^2_{0020} + \overline g^2_{0002} - \cF^2(q_2, q_2),
\end{array}\right\} \\
\left.
\begin{array}{rcl}
\displaystyle
\frac{d}{d\theta}w_{0011}(\theta)
& = & g^1_{0011} e^{i \omega_1 \theta}
+\overline g^1_{0011} e^{-i \omega_1 \theta}
+g^2_{0011} e^{i \omega_2 \theta}
+\overline g^2_{0011} e^{-i \omega_2 \theta},\\
\cL w_{0011} & = & g^1_{0011} + \overline g^1_{0011}
+ g^2_{0011} + \overline g^2_{0011} - \cF^2(q_2, \overline q_2).
\end{array}\right\}
\end{gather*}

These equations are solved similarly to \eqref{w_2000-diff} and
\eqref{w_2000-bndry} to obtain expressions equivalent to those of
\cite{Wu-Guo-13} for all the quadratic coefficients of the graph of
the center manifold $w(z)$ as
\begin{gather*}
w_{2000} = -\frac{g^1_{2000}  e^{i \omega_1 \theta}}{i \omega_1}
-\frac{\overline{g}^1_{0200}e^{-i \omega_1 \theta}}{3 i \omega_1}
    +\frac{g^2_{2000} e^{i \omega_2 \theta}}{i (\omega_2-2 \omega_1)}
    -\frac{\overline{g}^2_{0200}e^{-i \omega_2 \theta}}{i (\omega_2+2
      \omega_1)}
    +E_{2000} e^{2 i \omega_1 \theta},\\
w_{1100} = \frac{g^1_{1100}  e^{i \omega_1 \theta}}{i \omega_1}
-\frac{\overline{g}^1_{1100}e^{-i \omega_1 \theta}}{i \omega_1}
    +\frac{g^2_{1100} e^{i \omega_2 \theta}}{i \omega_2}
    -\frac{\overline{g}^2_{1100} e^{-i \omega_2 \theta}}{i \omega_2}
    +E_{1100},\\
w_{1010} = -\frac{g^1_{1010}  e^{i \omega_1 \theta}}{i \omega_2}
-\frac{\overline{g}^1_{0101} e^{-i \omega_1 \theta}}{i (2
  \omega_1+\omega_2)}
    -\frac{g^2_{1010} e^{i \omega_2 \theta}}{i \omega_1}
    -\frac{\overline{g}^2_{0101}  e^{-i \omega_2 \theta}}{i
      (\omega_1+2 \omega_2)}
    +E_{1010} e^{i (\omega_1+\omega_2) \theta},\\
w_{1001} = \frac{g^1_{1001}  e^{i \omega_1 \theta}}{i \omega_2}
+\frac{\overline{g}^1_{0110}e^{-i \omega_1 \theta}}{i (\omega_2-2
  \omega_1)}
    +\frac{g^2_{1001} e^{i \omega_2 \theta}}{i (2 \omega_2-\omega_1)}
    -\frac{\overline{g}^2_{0110} e^{-i \omega_2 \theta}}{i \omega_1}
    +E_{1001} e^{i (\omega_1-\omega_2) \theta},\\
w_{0020} = \frac{g^1_{0020}  e^{i \omega_1 \theta}}{i (\omega_1-2
  \omega_2)} -\frac{\overline{g}^1_{0002} e^{-i \omega_1 \theta}}{i
  (\omega_1+2 \omega_2)}
    -\frac{g^2_{0020} e^{i \omega_2 \theta}}{i \omega_2}
    -\frac{\overline{g}^2_{0002} e^{-i \omega_2 \theta}}{3 i \omega_2}
    +E_{0020} e^{2 i \omega_2 \theta},\\
w_{0011} = \frac{g^1_{0011}  e^{i \omega_1 \theta}}{i \omega_1}
-\frac{\overline{}g^1_{0011} e^{-i \omega_1 \theta}}{i \omega_1}
    +\frac{g^2_{0011} e^{i \omega_2 \theta}}{i \omega_2}
    -\frac{\overline{g}^2_{0011}  e^{-i \omega_2 \theta}}{i \omega_2}
    +E_{0011},
\end{gather*}
where the constants of integration are given by
\begin{gather*}
E_{1100} = \frac{\cF^2(q_1,\overline q_1)}{\Delta( 0)},\qquad
E_{2000} = \frac{\cF^2(q_1,q_1)}{\Delta( 2 i \omega_1)},\qquad
E_{1010} = \frac{\cF^2(q_1,q_2)}{\Delta( i (\omega_1+\omega_2))},\\
E_{1001} = \frac{\cF^2(q_1,\overline q_2)}{\Delta( i (\omega_1-\omega_2))},\qquad
E_{0020} = \frac{\cF^2(q_2,q_2)}{\Delta( 2 i \omega_2)},\qquad
E_{0011} = \frac{\cF^2(q_2,\overline q_2)}{\Delta( 0)},
\end{gather*}
and $\Delta$ is the characteristic function defined in
\eqref{characteristic-equation}.  Finally the cubic terms are given by
the expressions
\be \label{G-H_normal_form}
\begin{split}
g^j_{2100} &= \overline D_j \cF^3 (q_1 , q_1 , \overline q_1 )
+ 2\overline D_j \cF^2 (q_1 , w_{1100} ) + \overline D_j \cF^2 (\overline q_1 , w_{2000} ),\\
g^j_{1011} &= \overline D_j \cF^3 (q_1 , q_2 ,\overline q_2 )
+ \overline D_j \cF^2 (q_1 , w_{0011} )
+\overline D_j \cF^2 (q_2 , w_{1001} )
+ \overline D_j \cF^2 (\overline q_2 , w_{1010} ),\\
g^j_{1110} &= \overline D_j \cF^3 (q_1 ,\overline q_1 , q_2 )
+ \overline D_j \cF^2 (q_1 ,\overline w_{1001} )
+\overline D_j \cF^2 (q_2 , w_{1100} )
+ \overline D_j \cF^2 (\overline q_1 , w_{1010} ),\\
g^j_{0021} &= \overline D_j \cF^3 (q_2 , q_2 ,\overline q_2 )
+ 2\overline D_j \cF^2 (q_2 , w_{0011} )
+ \overline D_j \cF^2 (\overline q_2 , w_{0020} ),
\end{split}
\ee
where, from formula \eqref{eq:F3},
\begin{gather*}
\cF^3(q_1, q_1, \overline q_1) = - c^2 \omega_1^2[3 \gamma + i \omega_1], \quad
\cF^3(q_2, q_2, \overline q_2) = - c^2 \omega_2^2[3 \gamma + i \omega_2], \\
\cF^3(q_1,  \overline q_1, q_2) = - c^2 [ \gamma ( 2\omega_1^2 + \omega_2^2)
+ i \omega_2^3], \quad
\cF^3(q_1, q_2, \overline q_2) = - c^2 [ \gamma ( \omega_1^2 + 2\omega_2^2)
+ i \omega_1^3]. \\
\end{gather*}

\subsection{Normal form}
\label{secapp:NormalForm}

To determine the bifurcation structure near a Hopf-Hopf point, we
follow the the approach of Kuznetsov \cite{Kuz-04-Book}. Kuznetsov
considers the same ODE \eqref{ctr-flow} (\cite[Equation
(8.88)]{Kuz-04-Book}) on the generalized center eigenspace $E^c$ with
$g^j(z(t))$ defined by \eqref{gjz}, but he expands $g^j(z)$ as
\begin{equation}\label{nonlinearity-exp-Kuz}
g^j(z) = \sum_{\ell +s +r +k \geq 2}
\widetilde{g}^j_{\ell s r k}
z_1^\ell \overline z_1^r z_2^r \overline z_2^k.
\end{equation}
Comparing \eqref{nonlinearity-exp-Kuz} with \eqref{nonlinearity-exp}
we see that we require
\begin{equation}\label{nonlinearity-conv}
\widetilde{g}^j_{\ell s r k}=\frac{1}{\ell! s! r! k!} g^j_{\ell s r k}.
\end{equation}
Kuznetsov is inconsistent between papers on whether or not he includes
the factorial terms in the expansion of $w(z)$ in his version of
\eq{manifold-exp}, but that is irrelevant to our exposition in this
section, because we only use $w(z)$ in the previous section to project
the center manifold onto the generalized center eigenspace.  As such,
terms from the expansion of $w(z)$ appear in $g^j_{\ell s r k}$, but
these were computed already in the previous section.  We then have

\begin{lemma}[Poincar\'e Normal Form (Lemma 8.13 in Kuznetsov
  \cite{Kuz-04-Book})]
Assume the non-resonance condition
\begin{itemize}
\item[{\rm(HH.0)}] $ k \omega_1 \neq \ell \omega_2$ for $k,\ell \in
  \N_0$ with  $k+\ell \leq 5$.
\end{itemize}
Then there exists a locally defined smooth and smoothly
parameter-dependent invertible transformation of the complex variables
that for all sufficiently small $\|\alpha\|$ (where
$\alpha=(\kappa_1-\kappa_1^*,\kappa_2-\kappa_2^*)$) reduces
\eqref{ctr-flow} to
\be \label{K8.89}
\begin{split}
\dot{w}_1 & = \lambda_1(\alpha)w_1 +
G_{2100}^1(\alpha)w_1|w_1|^2+G_{1011}^1(\alpha)w_1|w_2|^2+
G_{3200}^1(\alpha)w_1|w_1|^4 \\
& \mbox{}\qquad +G_{2111}^1(\alpha)w_1|w_1|^2|w_2|^2+
G_{1022}^1(\alpha)w_1|w_2|^4+\cO(\|(w_1,\overline{w}_1,w_2,\overline{w}_2)\|^6),\\
\dot{w}_2 & = \lambda_2(\alpha)w_2 +
G_{0021}^2(\alpha)w_2|w_2|^2+G_{1110}^2(\alpha)w_2|w_1|^2+
G_{0032}^2(\alpha)w_2|w_2|^4 \\
& \mbox{}\qquad +G_{1121}^2(\alpha)w_2|w_1|^2|w_2|^2+
G_{2210}^2(\alpha)w_2|w_1|^4+\cO(\|(w_1,\overline{w}_1,w_2,\overline{w}_2)\|^6),
\end{split}
\ee
where $w_{1,2}\in\C$ and
$\|(w_1,\overline{w}_1,w_2,\overline{w}_2)\|^2=|w_1|^2+|w_2|^2$. The
complex-valued functions ${G}^{1,2}_{\ell s r k}(\alpha)$ are smooth
and, moreover,
\begin{align} \notag
G^1_{2100}(0)&=\widetilde{g}^1_{2100}+\frac{i}{\omega_1}\widetilde{g}^1_{1100}\,\widetilde{g}^1_{2000}
+\frac{i}{\omega_2}(\widetilde{g}^1_{1010}\,\widetilde{g}^2_{1100}-\widetilde{g}^1_{1001}\,\gtb^2_{1100})
-\frac{i}{2\omega_1+\omega_2}\widetilde{g}^1_{0101}\,\gtb^2_{0200} \\
& \mbox{} \qquad
-\frac{i}{2\omega_1-\omega_2}\widetilde{g}^1_{0110}\,\widetilde{g}^2_{2000}
-\frac{i}{\omega_1}|\widetilde{g}^1_{1100}|^2-\frac{2i}{3\omega_1}|\widetilde{g}^1_{0200}|^2, \label{K8.90}\\
G^1_{1011}(0)&=\widetilde{g}^1_{1011},
+\frac{i}{\omega_2}(\widetilde{g}^1_{1010}\,\widetilde{g}^2_{0011}-\widetilde{g}^1_{1001}\,\gtb^2_{0011}) \notag\\
& \mbox{} \qquad
+\frac{i}{\omega_1}(2\widetilde{g}^1_{2000}\,\widetilde{g}^1_{0011}-\widetilde{g}^1_{1100}\,\gtb^1_{0011}
-\widetilde{g}^2_{1010}\,\widetilde{g}^1_{0011}-\widetilde{g}^1_{0011}\,\gtb^2_{0110})
-\frac{2i}{\omega_1+2\omega_2}\widetilde{g}^1_{0002}\,\gtb^2_{0101} \notag \\
& \mbox{} \qquad
-\frac{2i}{\omega_1-2\omega_2}\widetilde{g}^1_{0020}\,\widetilde{g}^2_{1001}
-\frac{i}{2\omega_1-\omega_2}|\widetilde{g}^1_{0110}|^2-\frac{i}{2\omega_1+\omega_2}|\widetilde{g}^1_{0101}|^2, \label{K8.91}\\
G^2_{1110}(0)&=\widetilde{g}^2_{1110}
+\frac{i}{\omega_1}(\widetilde{g}^1_{1100}\,\widetilde{g}^2_{1010}-\widetilde{g}^2_{0110}\,\gtb^1_{1100}) \notag\\
& \mbox{} \qquad
+\frac{i}{\omega_2}(2\widetilde{g}^2_{0020}\,\widetilde{g}^2_{1100}-\widetilde{g}^2_{0011}\,\gtb^2_{1100}
-\widetilde{g}^1_{1010}\,\widetilde{g}^2_{1100}-\widetilde{g}^2_{1100}\,\gtb^1_{1001})
-\frac{2i}{2\omega_1+\omega_2}\widetilde{g}^2_{0200}\,\gtb^1_{0101} \notag \\
& \mbox{} \qquad
+\frac{2i}{2\omega_1-\omega_2}\widetilde{g}^2_{2000}\,\widetilde{g}^1_{0110}
+\frac{i}{\omega_1-2\omega_2}|\widetilde{g}^2_{1001}|^2-\frac{i}{\omega_1+2\omega_2}|\widetilde{g}^2_{0101}|^2,  \label{K8.92}\\
G^2_{0021}(0)&=\widetilde{g}^2_{0021}+\frac{i}{\omega_2}\widetilde{g}^2_{0011}\,\widetilde{g}^2_{0020}
+\frac{i}{\omega_1}(\widetilde{g}^2_{1010}\,\widetilde{g}^1_{0011}-\widetilde{g}^2_{0110}\,\gtb^1_{0011})
-\frac{i}{2\omega_2+\omega_1}\widetilde{g}^2_{0101}\,\gtb^1_{0002} \notag \\
& \mbox{} \qquad
-\frac{i}{2\omega_2-\omega_1}\widetilde{g}^2_{1001}\,\widetilde{g}^1_{0020}
-\frac{i}{\omega_2}|\widetilde{g}^2_{0011}|^2-\frac{2i}{3\omega_2}|\widetilde{g}^2_{0002}|^2,
 \label{K8.93}
\end{align}
where all the $\widetilde{g}^j_{\ell s r k}$ are evaluated at $\alpha=0$.
\end{lemma}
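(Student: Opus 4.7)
The plan is to prove Lemma 8.13 by the standard Poincar\'e normal form procedure applied order-by-order to equation \eqref{ctr-flow}. The linearization at the Hopf-Hopf point is diagonal with eigenvalues $\{i\omega_1,-i\omega_1,i\omega_2,-i\omega_2\}$, acting by Lie bracket on the monomial basis; a monomial $z_1^{m_1}\bar z_1^{m_2}z_2^{m_3}\bar z_2^{m_4}e_j$ in the $j$-th equation is scaled by $\mu_{m,j}=(m_1-m_2)i\omega_1+(m_3-m_4)i\omega_2-\lambda_j$ and can be eliminated by a near-identity polynomial transformation exactly when $\mu_{m,j}\neq 0$. A direct enumeration shows that, through order five, the only \emph{resonant} monomials are $w_1|w_1|^{2a}|w_2|^{2b}$ in the $\dot w_1$-equation and $w_2|w_1|^{2a}|w_2|^{2b}$ in the $\dot w_2$-equation. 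The non-resonance hypothesis \textrm{(HH.0)} guarantees $\mu_{m,j}\neq 0$ for every non-resonant multi-index with $|m|\leq 5$, which is exactly what is required to solve the homological equations at orders two, three, four and five. This produces the overall structural form \eqref{K8.89}.

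For the explicit coefficient formulas \eqref{K8.90}--\eqref{K8.93}, I would carry out the reductions at orders two and three concretely. Write $z=w+h_2(w)$, where $h_2$ is quadratic and chosen so that its coefficient in front of each non-resonant monomial equals $\widetilde g^j_{lsrk}/\mu_{m,j}$; these ratios produce precisely the denominators $\omega_1$, $\omega_2$, $2\omega_1\pm\omega_2$, $\omega_1\pm 2\omega_2$, $3\omega_1$, $3\omega_2$ visible in the formulas. Substituting $z=w+h_2(w)$ into \eqref{ctr-flow} modifies the cubic part of the right-hand side: the new resonant cubic coefficients receive three contributions, namely (i) the original $\widetilde g^j_{\ell s r k}$, (ii) products arising from the derivative of $h_2$ applied to the quadratic nonlinearity, and (iii) products arising from the quadratic nonlinearity evaluated on $h_2$. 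Collecting only those cubic monomials that match the resonant template and reading off coefficients yields $G^j_{2100}(0)$, $G^j_{1011}(0)$, $G^j_{1110}(0)$ and $G^j_{0021}(0)$. The smooth parameter dependence of $G^j_{lsrk}(\alpha)$ and the uniform $\cO(\|(w_1,\overline w_1,w_2,\overline w_2)\|^6)$ remainder follow by continuity: $\lambda_{1,2}(\alpha)$ and all $\mu_{m,j}(\alpha)$ stay bounded away from zero on a neighborhood of $\alpha=0$ under \textrm{(HH.0)}, so the transformations $h_k(\cdot;\alpha)$ can be chosen jointly smooth in $\alpha$.

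The main obstacle is the bookkeeping in step (ii)-(iii) above: one has to enumerate, for each of the four resonant cubic monomials, precisely which ordered pairs $(\widetilde g^{j_1}_{\ell_1 s_1 r_1 k_1},\widetilde g^{j_2}_{\ell_2 s_2 r_2 k_2})$ combine via the chain rule to produce that monomial, and to check that the signs, complex conjugations, and the combinatorial factors implicit in the Kuznetsov convention \eqref{nonlinearity-exp-Kuz} (contrast with \eqref{nonlinearity-exp}) are tracked correctly; the conversion \eqref{nonlinearity-conv} between the two expansions is the bridge that ultimately lets us use the coefficients $g^j_{lsrk}$ computed in \sref{sec:code} in these formulas. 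Everything else is a routine application of the normal-form algorithm as developed in \cite{Kuz-04-Book}.
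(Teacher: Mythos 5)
The first thing to note is that the paper does not prove this lemma at all: it is imported verbatim from Kuznetsov \cite{Kuz-04-Book} (his Lemma 8.13) and simply applied to the center-manifold ODE \eqref{ctr-flow}, so there is no in-paper argument to compare against. Your outline follows the standard Poincar\'e--Dulac procedure that Kuznetsov himself uses: the linear part is diagonal with spectrum $\{\pm i\omega_1,\pm i\omega_2\}$, the homological equation at each order is solvable for every non-resonant monomial, condition \textrm{(HH.0)} guarantees that through order five the only resonant monomials are $w_1|w_1|^{2a}|w_2|^{2b}$ and $w_2|w_1|^{2a}|w_2|^{2b}$, and the resonant cubic coefficients are assembled from the original cubic terms plus chain-rule products of quadratic terms with the quadratic part of the near-identity transformation. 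At the structural level this is the right route and gives \eqref{K8.89}.

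As a proof of the statement as written, however, your argument stops exactly where the content begins. The structural form \eqref{K8.89} is the easy half; the substance of the lemma is the four explicit formulas \eqref{K8.90}--\eqref{K8.93}, with their particular signs, conjugations, resonant denominators, and the fact (used later when real parts are taken) that the last two terms of each are purely imaginary. You explicitly defer this as ``bookkeeping,'' so the formulas are asserted rather than derived; for a coefficient lemma that deferral is the gap. Two further points to track if you complete the computation: (i) the $\widetilde{g}^j_{\ell s r k}$ appearing in \eqref{K8.90}--\eqref{K8.93} are the Kuznetsov-normalized coefficients of \eqref{nonlinearity-exp-Kuz}, related to the paper's $g^j_{\ell s r k}$ by the factorials in \eqref{nonlinearity-conv}, and the cubic $g$'s already incorporate the center-manifold corrections through the $w_{\ell s r k}$ terms of \sref{sec:code} --- the normal-form step you describe acts on the four-dimensional ODE \eqref{ctr-flow}, whose cubic part is not simply the cubic part of $\cF$; (ii) the smooth $\alpha$-dependence requires not just that the small divisors stay bounded away from zero near $\alpha=0$ but that $\lambda_{1,2}(\alpha)$ and the associated eigenprojections vary smoothly, which holds here because the eigenvalues are simple but should be stated.
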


Note that the last two terms in each of the expressions
\eq{K8.90}--\eq{K8.93} are purely imaginary; these terms will vanish
when we take real parts later.

We next make a near identity transformation
$$v_1=w_1+K_1w_1|w_1|^2, \qquad v_2=w_2+K_2w_2|w_2|^2,$$
and introduce a new time $\tau$ with
$$dt=(1+e_1|w_1|^2+e_2|w_2|^2)d\tau,$$
where $K_{1,2}(\alpha)$ and $e_{1,2}(\alpha)$ are chosen judiciously,
to give the following result, where $\dot{v}_{1,2}$ indicates the
derivative with respect to $\tau$.

\begin{lemma}[Lemma 8.14 in Kuznetsov \cite{Kuz-04-Book}] \label{lemK8.14}
Assume that
\begin{itemize}
\item[\rm(HH.1)] $\Re G^1_{2100}(0) \neq 0$ ;
\item[\rm(HH.2)] $\Re G^1_{1011}(0) \neq 0$ ;
\item[\rm(HH.3)] $\Re G^2_{1110}(0) \neq 0$ ;
\item[\rm(HH.4)] $\Re G^2_{0021}(0) \neq 0$ ;
\end{itemize}
then the system \eq{K8.89} is locally smoothly orbitally equivalent to
\be \label{K8.95} \left.
\begin{aligned}
\dot{v}_1 & = \lambda_1(\alpha)v_1 + P_{11}(\alpha)v_1|v_1|^2+P_{12}(\alpha)v_1|v_2|^2+
iR_{1}(\alpha)v_1|v_1|^4+ S_{1}(\alpha)v_1|v_2|^4\\
& \mbox{}\qquad +\cO(\|(v_1,\overline{v}_1,v_2,\overline{v}_2)\|^6),\\
\dot{v}_2 & = \lambda_2(\alpha)v_2 + P_{21}(\alpha)v_2|v_1|^2+P_{22}(\alpha)v_2|v_2|^2+
S_{2}^2(\alpha)v_2|v_2|^4 +iR_{2}(\alpha)v_2|v_2|^4\\
& \mbox{}\qquad +\cO(\|(v_1,\overline{v}_1,v_2,\overline{v}_2)\|^6),
\end{aligned} \right\}
\ee
where $v_{1,2}$ are new complex variables, $P_{jk}(\alpha)$ and
$S_{k}(\alpha)$ are complex-valued smooth functions, and $R_k(\alpha)$
are real-valued smooth functions.
\end{lemma}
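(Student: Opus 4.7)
The plan is to follow the standard Poincar\'e--Dulac argument from \cite{Kuz-04-Book}. Invert the near-identity map as $w_j = v_j - K_j v_j|v_j|^2 + O(\|v\|^5)$, substitute this into \eq{K8.89} together with the time rescaling $dt = (1 + e_1|v_1|^2 + e_2|v_2|^2)\,d\tau$, and expand $dv_j/d\tau$ up to fifth order. The parameters $K_1, K_2 \in \C$ and $e_1, e_2 \in \R$ provide six real degrees of freedom that will be used to normalize six real combinations of the fifth-order resonant coefficients.

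At cubic order, a direct computation using the identity $\dot v_j = \dot w_j + K_j(2|w_j|^2 \dot w_j + w_j^2 \dot{\bar w}_j)$ shows that the coefficient of $v_j|v_j|^2$ in $\dot v_j$ is shifted by $2\mu_j(\alpha)K_j(\alpha)$, where $\mu_j = \Re\lambda_j$; this shift vanishes at $\alpha=0$ by the Hopf resonance, so it does not affect $P_{jj}(0)$. The time rescaling shifts the coefficient of $v_j|v_k|^2$ by $\lambda_j(\alpha) e_k(\alpha)$. Together these contributions define the complex-valued coefficients $P_{jk}(\alpha)$, smooth in $(\alpha, K_j, e_k)$. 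At fifth order, three mechanisms generate corrections: (i) iteration of the cubic terms under $w_j \mapsto v_j - K_j v_j|v_j|^2$ in each factor, using for instance $w_j|w_j|^2 = v_j|v_j|^2 - (2K_j+\bar K_j)v_j|v_j|^4 + O(\|v\|^7)$ and the analogous expansion of $w_j|w_k|^2$; (ii) the quintic contribution of $K_j(2|w_j|^2\dot w_j + w_j^2\dot{\bar w}_j)$ arising from the cubic part of $\dot w_j$; and (iii) the multiplication of the cubic terms of \eq{K8.89} by the time rescaling factor. The net effect is that each of the resonant fifth-order coefficients of $v_j|v_j|^4$, $v_j|v_1|^2|v_2|^2$, and $v_j|v_2|^4$ in $\dot v_j$ becomes an explicit affine function of $(K_1, \bar K_1, K_2, \bar K_2, e_1, e_2)$ whose matrix entries are built from the original $G^j_{\cdots}(0)$ and $\widetilde{g}^j_{\cdots}(0)$.

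Impose the six real normalization conditions: vanishing real part of the coefficient of $v_j|v_j|^4$ for $j=1,2$ (yielding $iR_j$ with $R_j$ real, two real conditions) and vanishing complex coefficient of the mixed term $v_j|v_1|^2|v_2|^2$ for $j=1,2$ (four real conditions). This yields a $6 \times 6$ real linear system in $(\Re K_1, \Im K_1, \Re K_2, \Im K_2, e_1, e_2)$ whose matrix, after carefully organising the contributions from (i)--(iii), has a determinant that factors as a nonzero multiple of $\Re G^1_{2100}(0) \cdot \Re G^1_{1011}(0) \cdot \Re G^2_{1110}(0) \cdot \Re G^2_{0021}(0)$; its non-singularity is therefore precisely the hypothesis (HH.1)--(HH.4). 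Smoothness of the resulting $K_j(\alpha)$ and $e_k(\alpha)$ in the bifurcation parameter $\alpha$, and hence of all $P_{jk}(\alpha)$, $R_k(\alpha)$, $S_k(\alpha)$, then follows from the implicit function theorem applied in a neighborhood of $\alpha = 0$ where (HH.1)--(HH.4) persist by continuity. Reality of $R_k(\alpha)$ is immediate by construction, since we explicitly kill the real part of the $v_k|v_k|^4$ coefficient. The main obstacle is the substantial bookkeeping required to correctly track all fifth-order contributions and to verify the claimed factorisation of the $6\times 6$ determinant; in practice this calculation is most reliably performed with a symbolic computation system, exactly as in our implementation \NormalForm.
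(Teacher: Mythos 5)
The paper does not actually prove this lemma --- it is quoted as Lemma 8.14 of Kuznetsov \cite{Kuz-04-Book}, with only the transformation $v_j=w_j+K_jw_j|w_j|^2$, $dt=(1+e_1|w_1|^2+e_2|w_2|^2)d\tau$ and the consequence $e_1(0)=-\Re G^1_{3200}(0)/\Re G^1_{2100}(0)$ displayed --- and your reconstruction follows exactly that argument with the correct six real normalization conditions, so it is essentially the same approach. One refinement worth noting: at $\alpha=0$ the terms quadratic in $K_j$ cancel because $\lambda_j+\overline{\lambda}_j=0$, and the resulting system for $(e_1,e_2,K_1,K_2)$ is triangular rather than a generic $6\times6$ system --- the two real conditions reduce to $\Re G^1_{3200}+e_1\Re G^1_{2100}=0$ and $\Re G^2_{0032}+e_2\Re G^2_{0021}=0$, using (HH.1) and (HH.4), after which the complex mixed conditions give $K_1=-(G^1_{2111}+e_2G^1_{2100}+e_1G^1_{1011})/(2\Re G^1_{1011})$ and its analogue for $K_2$, using (HH.2) and (HH.3) --- so the determinant in fact carries the factors $\Re G^1_{1011}(0)$ and $\Re G^2_{1110}(0)$ squared; this is immaterial to the conclusion but makes the one-to-one correspondence between each hypothesis and each normalization step explicit.
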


From the proof of Lemma~\ref{lemK8.14} we obtain
\be
\begin{split} \label{K8.1045}
\Re P_{11}(0)=\Re G^1_{2100}(0), & \quad \Re P_{12}(0)=\Re G^1_{1011}(0),  \\ 
\Re P_{21}(0)=\Re G^2_{1110}(0), & \quad \Re P_{22}(0)=\Re G^2_{0021}(0).  
\end{split}
\ee
This follows because
$P_{11}(\alpha)=\hat{G}^1_{2100}=G^1_{2100}+\lambda_1e_1+(\lambda_1+\overline{\lambda}_1)K_1$
and $\lambda_1(0)=i\omega_1$, while $e_1(0)=-\Re G^1_{3200}(0)/\Re
G^1_{2100}(0)$, so
$$P_{11}(0)=G^1_{2100}(0)-i\omega_1\frac{\Re G^1_{3200}(0)}{\Re G^1_{2100}(0)},$$
which implies that $\Re P_{11}(0)=\Re G^1_{2100}(0)$. The other
identities in \eq{K8.1045} follow similarly.

Next we rewrite the system \eq{K8.95} in polar coordinates
$(r_1,r_2,\phi_1,\phi_2)$ by letting
$$v_1=r_1e^{i\phi_1}, \qquad v_2=r_2e^{i\phi_2}.$$
Writing $v_i=x_i+y_i$ and ignoring the higher-order terms for a
moment, we have $r_i^2=x_i^2+y_i^2$ and
\begin{align*}
r_i\rdot_i &= x_i\xdot_i+y_i\ydot_i = x_i\Re\vdot_i+y_i\Im\vdot_i\\
& = x_i\bigl(\Re(\lambda_iv_i)+\Re(P_{i1}v_i)r_1^2+\Re(P_{i2}v_i)r_2^2-R_iy_ir_i^4+\Re(S_iv_i)r_{3-i}^4\bigr)\\
& \mbox{}\qquad+y_i\bigl(\Im(\lambda_iv_i)+\Im(P_{i1}v_i)r_1^2+\Im(P_{i2}v_i)r_2^2+R_ix_ir_i^4+\Im(S_iv_i)r_{3-i}^4\bigr)\\
&= x_i\Bigl(\mu_ix_i-\omega_iy_i+(\Re(P_{i1})x_i-\Im(P_{i1})y_i)r_1^2+(\Re(P_{i2})x_i-\Im(P_{i2})y_i)r_2^2-R_iy_ir_i^4\\
&\mbox{}\qquad+(\Re(S_i)x_i-\Im(S_i)y_i)r_{3-i}^4\Bigr)
+y_i\Bigl(\mu_iy_i+\omega_ix_i+(\Im(P_{i1})x_i+\Re(P_{i1})y_i)r_1^2\\
&\mbox{}\qquad+(\Im(P_{i2})x_i+\Re(P_{i2})y_i)r_2^2+R_ix_ir_i^4+(\Im(S_i)x_i+\Re(S_i)y_i)r_{3-i}^4\Bigr)\\
& = \mu_i(x_i^2+y_i^2) + \Re(P_{i1})(x_i^2+y_i^2)r_1^2 + \Re(P_{i2})(x_i^2+y_i^2)r_2^2 + \Re(S_i)(x_i^2+y_i^2)r_{3-i}^4 \\
& = \mu_i r_i^2 + \Re(P_{i1})r_i^2r_1^2 + \Re(P_{i2})r_i^2r_2^2 + \Re(S_i)r_i^2r_{3-i}^4,
\end{align*}
where $3-i=1$ when $i=2$ and  $3-i=2$ when $i=1$, and so denotes the
other index.

Then \eq{K8.95} can be written as
\be
\begin{split}
\rdot_1&=r_1\bigl(\mu_1(\alpha)+p_{11}(\alpha)r_1^2+p_{12}(\alpha)r_2^2+s_1(\alpha)r_2^4\bigr)
+\Phi_1(r_1,r_2,\phi_1,\phi_2,\alpha),\\
\rdot_2&=r_2\bigl(\mu_2(\alpha)+p_{21}(\alpha)r_1^2+p_{22}(\alpha)r_2^2+s_2(\alpha)r_1^4\bigr)
+\Phi_2(r_1,r_2,\phi_1,\phi_2,\alpha),\\
\phidot_1&=\omega_1(\alpha)+\Psi_1(r_1,r_2,\phi_1,\phi_2,\alpha),\\
\phidot_2&=\omega_2(\alpha)+\Psi_2(r_1,r_2,\phi_1,\phi_2,\alpha),
\end{split}
\ee
where
\be \label{K357}
p_{jk}=\Re P_{jk}, \quad s_j=\Re S_j, \quad j,k=1,2.
\ee

If the map
$(\kappa_1,\kappa_2)\mapsto(\mu_1(\kappa_1,\kappa_2),\mu_2(\kappa_1,\kappa_2))$
is regular at $(\kappa_1^*,\kappa_2^*)$ or, equivalently, the map
$\alpha\mapsto(\mu_1(\alpha),\mu_2(\alpha))$ is regular at $\alpha=0$,
that is,
$\det\left.\left(\tfrac{\partial(\mu_1,\mu_2)}{\partial(\alpha_1,\alpha_2)}\right)\right|_{\alpha=0}\ne0$,
then one can use $(\mu_1,\mu_2)$ to parameterize a small neighbourhood
of $(\kappa_1^*,\kappa_2^*)$ in the parameter plane and, hence, regard
the functions of $\alpha$ in the theory above as functions of
$(\mu_1,\mu_2)$, which are the real parts of the eigenvalues, which
vanish at the Hopf-Hopf bifurcation. This condition is easy to verify
since
\be \label{eq:dlambdadkj}
\left.\frac{\partial\mu_i}{\partial \kappa_j}\right|_{\alpha=0}
=\left.\Re\left(\frac{\partial\lambda}{\partial \kappa_j}\right)\right|_{\lambda=i\omega_i},\quad \textrm{and} \quad
\frac{\partial\lambda}{\partial\kappa_j}
=\frac{-e^{-a_j\lambda}}{1-a_j\kappa_je^{-a_j\lambda}-a_{3-j}\kappa_{3-j}e^{-a_{3-j}\lambda}},
\ee
where the last expression follows from differentiating \eq{chareq}.
We obtain the following theorem, adapted from Theorem~8.8 in Kuznetsov
\cite{Kuz-04-Book}.

\begin{theorem} \label{thmK8.8}
Consider the constant delay DDE \eq{eq:rfdelnl}, where the linear
operator $\cL u_t$ is defined by \eq{eq:lop} and nonlinear operator
$\cF(u_t)$ is given by \eq{fut}, with parameters $(\kappa_1,\kappa_2)$
which has eigenvalues
$$\lambda_j(\kappa_1,\kappa_2)=\mu_j(\kappa_1,\kappa_2)\pm i\omega_j(\kappa_1,\kappa_2), \quad j=1,2$$
with
$$\mu_j(\kappa_1^*,\kappa_2^*)=0, \quad \omega_j(\kappa_1^*,\kappa_2^*)=\omega_j, \quad j=1,2.$$
If the nondegeneracy conditions
\begin{itemize}
\item[\rm(HH.0)] $ k \omega_1 \neq \ell \omega_2$ for $k,\ell \in \N_0$ with  $k+\ell \leq 5$,
\item[\rm(HH.1)] $p_{11}(\kappa_1^*,\kappa_2^*)=\Re G^1_{2100}(0) \neq 0$;
\item[\rm(HH.2)] $p_{12}(\kappa_1^*,\kappa_2^*)=\Re G^1_{1011}(0) \neq 0$;
\item[\rm(HH.3)] $p_{21}(\kappa_1^*,\kappa_2^*)=\Re G^2_{1110}(0) \neq 0$;
\item[\rm(HH.4)] $p_{22}(\kappa_1^*,\kappa_2^*)=\Re G^2_{0021}(0) \neq 0$;
\end{itemize}
hold, where the ${G}^{1,2}_{\ell s r k}(0)$ are defined by \eq{K8.90}-\eq{K8.93},
and
\begin{itemize}
\item[\rm(HH.5)] the map
  $(\kappa_1,\kappa_2)\mapsto(\mu_1(\kappa_1,\kappa_2),\mu_2(\kappa_1,\kappa_2))$
  is regular at $(\kappa_1^*,\kappa_2^*)$,
\end{itemize}
then the system is locally orbitally equivalent near the origin to
\be \label{ode4d}
\left.
\begin{aligned}
\dot{r}_1 &= r_1(\mu_1 + p_{11}(\mu)r_1^2 + p_{12}(\mu)r_2^2 + s_1(\mu)r_2^4) + \cO((r_1^2 + r_2^2)^3),\\
\dot{r}_2 &= r_2(\mu_2 + p_{21}(\mu)r_1^2 + p_{22}(\mu)r_2^2 + s_2(\mu)r_1^4) + \cO((r_1^2 + r_2^2)^3),\\
\dot{\varphi}_1 &=  \omega_1(\mu) +\Psi_1(r_1,r_2,\phi_1,\phi_2,\mu),\\
\dot{\varphi}_2 &=  \omega_2(\mu) +\Psi_2(r_1,r_2,\phi_1,\phi_2,\mu),
\end{aligned}\right\}
\ee
where $\Psi_j(0,0,\phi_1,\phi_2,\mu)=0$.
\end{theorem}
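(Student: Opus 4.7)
The plan is to carry out the four reductions already set up in \sref{secapp:centreman}--\sref{secapp:NormalForm} in sequence and verify that the resulting coefficients match those appearing in the polar system \eq{ode4d}. First I would use the decomposition $BC = P \oplus \ker \Pi$ constructed from the eigenvectors $q_j$ and adjoint eigenvectors $p_j$ to project the abstract ODE \eq{ODE-in-BC1} onto the four-dimensional generalised centre eigenspace $E^c = \cM_{\{\pm i\omega_1, \pm i\omega_2\}}$. This yields the complex ODE \eq{ctr-flow} driven by $g^j(z)$ as in \eq{gjz}, with the quadratic coefficients $w_{\ell s r k}$ of the centre manifold expansion \eq{manifold-exp} determined explicitly by solving the linear two-point problems derived from \eq{ctr-manifold}, as carried out in \sref{sec:code}.

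Second, assuming the non-resonance condition (HH.0), I would invoke the Poincar\'e normal form lemma (Lemma~8.13 in \cite{Kuz-04-Book}) to eliminate all non-resonant monomials up to order five, reducing \eq{ctr-flow} to the form \eq{K8.89} in which only the resonant cubic and quintic monomials $w_j |w_1|^{2\ell_1} |w_2|^{2\ell_2}$ remain. The key identities \eq{K8.90}--\eq{K8.93} express the cubic coefficients $G^j_{lsrk}(0)$ in terms of the quadratic and cubic coefficients $\widetilde{g}^j_{lsrk}$ of the reduced flow on $E^c$; these arise by solving the homological equations at each order, a routine but lengthy bookkeeping exercise.

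Third, under the non-degeneracy conditions (HH.1)--(HH.4), I would perform the near-identity change of variables $v_j = w_j + K_j w_j |w_j|^2$ together with the time reparametrisation $dt = (1 + e_1 |w_1|^2 + e_2 |w_2|^2) d\tau$, choosing $K_j$ and $e_j$ so as to remove the imaginary parts of the cubic terms $\Im G^1_{2100}$, $\Im G^1_{1011}$, $\Im G^2_{1110}$, $\Im G^2_{0021}$ (this is where the non-vanishing of their real parts is used). This produces \eq{K8.95} in which the quintic coefficients $S_j$ and $iR_j$ take a standardised form (Lemma~8.14 of \cite{Kuz-04-Book}). The identities \eq{K8.1045} follow directly, since $\lambda_j(0) = i\omega_j$ is purely imaginary so that the near-identity transformation only affects the imaginary parts of the cubic coefficients.

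Finally, writing $v_j = r_j e^{i\phi_j}$ and separating real and imaginary parts yields the polar system \eq{ode4d}, in which the $r$-equations decouple at the cubic level from the phases $\phi_j$ and the coefficients are $p_{jk} = \Re P_{jk}$ and $s_j = \Re S_j$ as in \eq{K357}. Assumption (HH.5), together with the formulas \eq{eq:dlambdadkj}, guarantees that the map $(\kappa_1,\kappa_2) \mapsto (\mu_1,\mu_2)$ is a local diffeomorphism at $(\kappa_1^*,\kappa_2^*)$, so the coefficients may equivalently be regarded as smooth functions of $(\mu_1,\mu_2)$. The main technical obstacle is the symbolic bookkeeping: one must verify that, after each transformation, no additional resonant monomials are generated and that the phase equations $\dot{\phi}_j$ depend on $(r_1, r_2)$ only through higher-order corrections $\Psi_j$ vanishing at $r_1 = r_2 = 0$. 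The construction is standard for constant-delay DDEs \cite{Bel-Cam-94,Wu-Guo-13}, and I would mirror the argument of \cite[Section~8.6.2]{Kuz-04-Book} verbatim once the coefficients $\widetilde{g}^j_{lsrk}$ of the centre-space reduction have been obtained from the formulas in \sref{sec:code}.
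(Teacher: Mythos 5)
Your overall route is the paper's: centre-manifold reduction to \eq{ctr-flow}, Kuznetsov's Lemma~8.13 to reach \eq{K8.89} under (HH.0), Lemma~8.14 to reach \eq{K8.95} under (HH.1)--(HH.4), passage to polar coordinates giving \eq{K357}, and (HH.5) with \eq{eq:dlambdadkj} to re-parametrise by $(\mu_1,\mu_2)$. The theorem is indeed obtained exactly as this chain of reductions, adapted from Theorem~8.8 of \cite{Kuz-04-Book}.

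There is, however, one step you have misdescribed, and as literally stated it would not deliver \eq{ode4d}. You propose to choose $K_j$ and $e_j$ ``so as to remove the imaginary parts of the cubic terms''. That is not what the near-identity transformation and time reparametrisation accomplish: in \eq{K8.95} the cubic coefficients $P_{jk}(\alpha)$ remain genuinely complex. The six real degrees of freedom in $K_{1,2}\in\C$ and $e_{1,2}\in\R$ are spent on the \emph{quintic} terms --- eliminating the cross terms $G^1_{2111}w_1|w_1|^2|w_2|^2$ and $G^2_{1121}w_2|w_1|^2|w_2|^2$ and making the self-terms purely imaginary, i.e.\ $iR_jv_j|v_j|^4$. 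This is precisely what guarantees that the radial equations in \eq{ode4d} contain only the quintic monomials $s_1r_2^4$ and $s_2r_1^4$: a surviving $r_1^3r_2^2$ term is of order five and cannot be hidden in the remainder $\cO((r_1^2+r_2^2)^3)$. The conditions (HH.1)--(HH.4) enter because, for instance, $e_1(0)=-\Re G^1_{3200}(0)/\Re G^1_{2100}(0)$ requires $\Re G^1_{2100}(0)\ne0$. Since at $\alpha=0$ the induced correction to the cubic coefficient, $\lambda_1e_1+(\lambda_1+\overline{\lambda}_1)K_1=i\omega_1e_1$, is purely imaginary, one still obtains \eq{K8.1045}; the residual imaginary parts of the $P_{jk}$ are harmless because they cancel in the computation of $r_i\rdot_i$. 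Note that your later remark --- that the transformation ``only affects the imaginary parts of the cubic coefficients'' --- is the correct statement and contradicts your earlier one; with that correction your argument coincides with the paper's.
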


We remark that Kuznetsov \cite{Kuz-04-Book} also gives a formula for
the $s_j(0)$, but we will not need this and, anyway, it requires terms
${G}^{1,2}_{\ell s r k}(0)$ that he does not state.

\subsection{Determining the normal form bifurcation diagram}
\label{sec:DHopfUnfold}

To determine the dynamics and bifurcation near a Hopf-Hopf point we
will apply Theorem~\ref{thmK8.8} to the constant delay DDE
\eq{eq:dde3}, which was written as an RFDE of the form
\eq{eq:rfdelnl}.  We do not need to consider the angle equations for
$\varphi_{1,2}$ from \eq{ode4d} because $\omega_{1,2}(0)>0$ and
$\Psi_{1,2}(0,0,\phi_1,\phi_2,\mu)=0$, so close to a Hopf-Hopf
bifurcation these equations just describe rotations.  Nearly nobody
also computes the functions $p_{ij}(\mu)$ appearing in
Theorem~\ref{thmK8.8}. To determine the qualitative bifurcation
diagram it is sufficient to consider the truncated amplitude system
\be \label{K8.110}
\begin{split}
\dot{r}_1 &= r_1(\mu_1 + p_{11}r_1^2 + p_{12}r_2^2 + s_1r_2^4),\\
\dot{r}_2 &= r_2(\mu_2 + p_{21}r_1^2 + p_{22}r_2^2 + s_2r_1^4).
\end{split}
\ee
Here $p_{ij}$ and $s_i$ are formally functions of $\mu$, but it is
sufficient to calculate $p_{ij}(0)$ and $s_i(0)$ to determine the
bifurcation diagram.  We only need to consider positive amplitudes
and, following Kuznetzov \cite{Kuz-04-Book}, we let
$\rho_j=r_j^2\geq0$ and rewrite the amplitude equations \eq{K8.110} as
\be \label{K8.111}
\begin{split}
\dot{\rho}_1 &= 2\rho_1(\mu_1 + p_{11}\rho_1 + p_{12}\rho_2 + s_1\rho_2^2),\\
\dot{\rho}_2 &= 2\rho_2(\mu_2 + p_{21}\rho_1 + p_{22}\rho_2 + s_2\rho_1^2).
\end{split}
\ee
Notice that an equlibrium of these equations with $\rho_1=\rho_2=0$
corresponds to the trivial steady state of \eq{eq:dde3}. An
equilibrium of the amplitude equations with exactly one of $\rho_i$
non-zero corresponds to a periodic orbit of \eq{eq:dde3} (because of
rotation from the angle equations), while an equilibrium of the
amplitude equations with both $\rho_i$ non-zero corresponds to a
two-dimensional torus for \eq{eq:dde3}. A periodic orbit of
\eq{K8.111} corresponds to a three-dimensional torus for \eq{ode4d}
and \eq{eq:dde3}.

There are several possible cases, but we focus on the case where
$p_{11}<0$ and $p_{22}<0$, since it arises at $\HH_j$ for $j=1$, $2$
and $3$. We make the change of coordinates
\be \label{K8.1105}
\xi_1=-p_{11}\rho_1, \qquad \xi_2=-p_{22}\rho_2, \quad \tau=2t
\ee
in  \eq{K8.111},  yielding
\begin{equation} \label{K8.112}
\begin{split}
\xi_1' &= \xi_1(\mu_1 -\xi_1-\vartheta\xi_2 + \Theta\xi_2^2),\\
\xi_2' &= \xi_2(\mu_2 -\xi_2-\delta\xi_1 + \Delta\xi_1^2),
\end{split}
\end{equation}
where
$$\vartheta=\frac{p_{12}}{p_{22}},
\quad \delta=\frac{p_{21}}{p_{11}}, \quad
\Theta=\frac{s_{1}}{p_{22}^2}, \quad \Delta=\frac{s_{2}}{p_{11}^2}.$$
Recalling that $\rho_i\geq0$, with $p_{11}<0$ and $p_{22}<0$ the minus
signs are incorporated into the change of coordinates \eq{K8.1105} so
that only solutions of \eq{K8.112} with $\xi_i\geq0$ for each $i$
correspond to solutions of \eq{K8.110}.

Kuznetsov \cite{Kuz-04-Book} only analyses the case $\vartheta\geq\delta$
and suggests to make a change of coordinates otherwise, but actually
it is easy to deal directly with all the cases where
$\vartheta\neq\delta$.

Equation \eq{K8.112} has a steady state at $(\xi_1,\xi_2)=(0,0)$ for
all values of the parameters, corresponding to the steady state of
\eq{ode4d} and \eq{eq:dde3}. For $\mu_1>0$ there is another steady
state of \eq{K8.112} with $(\xi_1,\xi_2)=(\mu_1,0)$. This corresponds
to a periodic orbit for \eq{ode4d} that bifurcates from the steady
state along the Hopf bifurcation curve
$$H_1=\{(\mu_1,\mu_2):\mu_1=0\}.$$
Similarly, for $\mu_2>0$ there is a third steady state of \eq{K8.112}
with $(\xi_1,\xi_2)=(0,\mu_2)$ corresponding to another periodic orbit
for \eq{ode4d} that bifurcates from the steady state along the Hopf
bifurcation curve
$$H_2=\{(\mu_1,\mu_2):\mu_2=0\}.$$
Finally, let us look for the torus and torus bifurcations.  We seek a
steady state of \eq{K8.112} not on the coordinate axes, so we require
\be \label{impfthmeqss} 0=\mu_1 -\xi_1-\vartheta\xi_2 +
\Theta\xi_2^2=\mu_2 -\xi_2-\delta\xi_1 + \Delta\xi_1^2.  \ee Applying
the implicit function theorem, we can find a function
$(\xi_1,\xi_2)=g(\mu_1,\mu_2)$ such that $(\mu_1,\mu_2,\xi_1,\xi_2)$
satisfy \eq{impfthmeqss} provided the appropriate Jacobian matrix is
nonzero, for which we require $\vartheta\delta-1\ne0$ or, equivalently,
\begin{itemize}
\item[\rm(HH.6)] \quad $\det\left(\begin{array}{cc}
p_{11}(0) & p_{12}(0) \\
p_{21}(0) & p_{22}(0)
\end{array}\right)\ne0$.
\end{itemize}
Then the implicit function theorem gives a steady-state solution of
\eq{K8.112} with
\be \label{hh1tor}
\xi_1=\frac{\vartheta\mu_2-\mu_1}{\vartheta\delta-1}+\cO(\mu_1^2+\mu_2^2), \qquad
\xi_2=\frac{\delta\mu_1-\mu_2}{\vartheta\delta-1}+\cO(\mu_1^2+\mu_2^2).
\ee
(This can also be seen by letting $\Theta=\Delta=0$ in \eq{K8.112} and
solving directly for $\xi_{1,2}$.). Recall that we need $\xi_{1,2}>0$
for the solution \eq{hh1tor} to correspond to a torus of \eq{ode4d}
and \eq{eq:dde3}. If $\delta\vartheta-1<0$ we then require
\be \label{eq:T12cond}
\delta\mu_1<\mu_2, \qquad \vartheta\mu_2<\mu_1
\ee
to satisfy this condition close to the bifurcation point; or with the
inequalities reversed, if $\delta\vartheta-1>0$.  This defines the torus
bifurcation curves $T_1$ and $T_2$ which both start at
$(\mu_1,\mu_2)=(0,0)$. To leading order, these satisfy one strict
inequality in \eq{eq:T12cond} with equality in the other expression;
and the torus exists in the cone for which both equalities hold.

If $\vartheta>0>\delta$ we obtain
\begin{gather} \label{eq:T1}
T_1=\{(\mu_1,\mu_2):\mu_2=\delta\mu_1+\cO(\mu_1^2), \; \mu_1>0 \},\\
T_2=\{(\mu_1,\mu_2):\mu_1=\vartheta\mu_2+\cO(\mu_2^2), \; \mu_2>0 \}, \label{eq:T2}
\end{gather}
with the torus existing between them with $\mu_1>0$.  Notice that, as
$(\mu_1,\mu_2)\to T_1$, we have $(\xi_1,\xi_2)\to(\mu_1,0)$, which is
the fixed point corresponding to the periodic orbit created in the
$H_1$ Hopf bifurcation. Similarly, as $(\mu_1,\mu_2)\to T_2$, we have
$(\xi_1,\xi_2)\to(0,\mu_2)$. Kuznetsov identifies this as Case III of
five cases depending on the signs of $\vartheta$, $\delta$ and
$\delta\vartheta-1$, which result in topologically different bifurcation
diagrams.

Once the normal form is calculated it is actually straightforward to
transform back to the original parameters $(\kappa_1,\kappa_2)$.  The
linear part of the mapping
$(\kappa_1,\kappa_2)\mapsto(\mu_1(\kappa_1,\kappa_2),\mu_2(\kappa_1,\kappa_2))$
is defined by
\[
\left(\!\begin{array}{r}
\mu_1\\
\mu_2
\end{array}\!\right)
=
\left(\!\begin{array}{r}
\Re\lambda_1\\
\Re\lambda_2
\end{array}\!\right)
=
\left(\!\begin{array}{rr}
\left.\Re\left(\frac{\partial\lambda}{\partial\kappa_1}\right)\right|_{\lambda=i\omega_1}
& \left.\Re\left(\frac{\partial\lambda}{\partial\kappa_2}\right)\right|_{\lambda=i\omega_1} \\
\left.\Re\left(\frac{\partial\lambda}{\partial\kappa_1}\right)\right|_{\lambda=i\omega_2}
& \left.\Re\left(\frac{\partial\lambda}{\partial\kappa_2}\right)\right|_{\lambda=i\omega_2}
\end{array}\!\right)
\left(\!\begin{array}{r}
\kappa_1-\kappa_1^*\\
\kappa_2-\kappa_2^*
\end{array}\!\right)
=
J\left(\!\begin{array}{r}
\kappa_1-\kappa_1^*\\
\kappa_2-\kappa_2^*
\end{array}\!\right),
\]
where the entries in the Jacobian matrix $J$ are calculated from
\eq{eq:dlambdadkj}. By \textrm{(HH.5)} the Jacobian is invertible and,
hence, to leading order
\be \label{invtransf}
\left(\begin{array}{r}
\kappa_1\\
\kappa_2
\end{array} \right)
=
\left(\begin{array}{r}
\kappa_1^*\\
\kappa_2^*
\end{array} \right)
+J^{-1}\left(\begin{array}{r}
\mu_1\\
\mu_2
\end{array} \right).
\ee

The calculation of the normal form coefficients for the
state-dependent DDE \eqref{eq:twostatedep} is implemented in the
accompanying Matlab \cite{Matlab} code \NormalForm.  This code first
uses symbolic differentiation to compute the constant delay expansion
of the DDE described \sref{sec:expansion}. The exact locations of the
Hopf-Hopf points are computed, as described in \sref{sec:centreman},
with the auxiliary routine \texttt{findHH}. Finally, the coefficients
of the Hopf-Hopf normal form are computed as described in
Appendix~\ref{app:NormalForm}.  At any of the points $\HH_1$, $\HH_2$ and
$\HH_3$, the code \NormalForm identifies the Hopf-Hopf bifurcation and
computes its respective normal form in about $3.7$ seconds on a Lenovo
Thinkpad X230.

The results of these calculations are summarized in
Tables~\ref{tab:HH1} and~\ref{tab:HHj}.

Although applied only to the state-dependent DDE
\eqref{eq:twostatedep} here, the code \NormalForm is nevertheless
general purpose. To compute Hopf-Hopf normal forms for a different
state-dependent DDE it would be necessary only to:
\begin{enumerate}[1.]
\item change the definition of the nonlinearity and the characteristic function,
\item supply approximate Hopf-Hopf points, and
\item compute the basis for the adjoint problem (which amounts to
  computing an integral that depends on the linear operator of the problem).
\end{enumerate}

\end{appendices}

\end{document}